\newcommand{\F}{{\mathbb{F}}}
\newcommand{\C}{{\mathbb{C}}}
\newcommand{\Q}{{\mathbb{Q}}}
\newcommand{\Z}{{\mathbb{Z}}}
\newcommand{\ba}{{\boldsymbol{a}}}
\newcommand{\bA}{{\boldsymbol{A}}}
\newcommand{\fS}{{\mathfrak{S}}}
\newcommand{\cF}{{\mathcal{F}}}
\newcommand{\cL}{{\mathcal{L}}}
\newcommand{\cM}{{\mathcal{M}}}
\newcommand{\cR}{{\mathcal{R}}}
\newcommand{\cLR}{{\mathcal{LR}}}
\newcommand\Irr{\operatorname{Irr}}
\newcommand\Ind{\operatorname{Ind}}
\newcommand\sgn{\varepsilon}
\renewcommand{\leq}{\leqslant}
\renewcommand{\geq}{\geqslant}
\newtheorem{thm}{Theorem}[section]
\newtheorem{cor}[thm]{Corollary}
\newtheorem{prop}[thm]{Proposition}
\newtheorem{lem}[thm]{Lemma}
\theoremstyle{definition}
\newtheorem{exmp}[thm]{Example}
\newtheorem{defn}[thm]{Definition}
\theoremstyle{remark}
\newtheorem{rem}[thm]{Remark}
\begin{document}

\date{}

%\title{On the Kazhdan--Lusztig order on families in type~$B_n$} 
\title{Ordering Lusztig's families in type~$B_n$} 

\author{Meinolf Geck and Lacrimioara Iancu}

\address{Institute of Mathematics, University of Aberdeen, Aberdeen 
AB24 3UE, UK} 

\email{m.geck@abdn.ac.uk, l.iancu@abdn.ac.uk}

\begin{abstract} 
Let $W$ be a finite Coxeter group and $L$ be a weight function on $W$ in 
the sense of Lusztig. We have recently introduced a pre-order relation 
$\preceq_L$ on the set of irreducible characters of $W$ which extends 
Lusztig's definition of ``families'' and which, conjecturally, corresponds 
to the ordering given by Kazhdan--Lusztig cells. Here, we give an explicit 
description of $\preceq_L$ for $W$ of type $B_n$ and any $L$. (All other 
cases are known from previous work.) This crucially relies on some new 
combinatorial constructions around Lusztig's ``symbols''. Combined with 
previous work, we deduce general compatibility results between $\preceq_L$ 
and Lusztig's $\ba$-function, valid for any $W,L$.
\end{abstract}

\subjclass[2000]{Primary 20C08} 

\maketitle
\pagestyle{myheadings}
%\markboth{Geck and Iancu}{Kazhdan--Lusztig order in type $B_n$}
\markboth{Geck and Iancu}{Ordering Lusztig's families in type $B_n$}

%\noindent {\it Keywords.} Coxeter groups, Lusztig families, bipartitions
%and symbols.

%\noindent 2000 {\it Mathematics Subject Classification.} Primary 20C08. 

%%%%%%%%%%%%%%%%%%%%%%%%%%%%%%%%%%%%%%%%%%%%%%%%%%%%%%%%%%%%%%%%%%%%%%%%%%%
\section{Introduction} \label{sec0}
Let $W$ be a finite Coxeter group and $\Irr(W)$ be the set of (complex) 
irreducible characters of $W$. Given a weight function $L$ on $W$ in the 
sense of Lusztig, one can attach to each $E\in\Irr(W)$ a numerical
invariant $\ba_E$. These invariants are defined using the associated
generic Iwahori--Hecke algebra; they  are an essential ingredient in 
Lusztig's definition \cite{LuBook} of the ``families'' of $\Irr(W)$. In 
\cite{klord}, we have introduced a natural pre-order relation $\preceq_L$ 
on $\Irr(W)$ which extends the definition of families. Furthermore, 
we have shown that if we are in the ``equal parameter case'' (that is, 
$L$ is constant on the generators of $W$) and $W$ is the Weyl group of 
a connected reductive algebraic group $G$, then 
\begin{itemize}
\item $\preceq_L$ corresponds to the ordering given by Kazhdan--Lusztig 
cells; 
\item $\preceq_L$ admits a geometric interpretation in terms of the
Springer correspondence and the closure relation among special
unipotent classes of $G$.
\end{itemize}
In particular, since the latter is known, this yields an explicit 
description of $\preceq_L$ in the equal parameter case. The pre-orders 
associated with arbitrary $L$ are not known to admit such a geometric 
interpretation, but in any case they are highly relevant in the study 
of cellular structures on Hecke algebras, following \cite{mycell}, 
\cite{myedin}. 

Some further examples of $\preceq_L$ have been discussed in \cite{klord} 
for cases where computations are possible (including Coxeter groups of 
type $F_4$ and of non-crystallographic type), but the essential case that 
remains to be considered -- and this is what we will do in this paper -- is 
when $W$ is of type $B_n$ and $L$ is an arbitrary weight function on $W$. 
In this case, the set $\Irr(W)$ is parametrised by pairs of partitions 
$(\lambda, \mu)$ such that $|\lambda|+|\mu|=n$ and there are infinitely 
many weight functions, indexed by two parameters $a,b\in\Z$. 
Thus, $\preceq_L$ gives rise to pre-order 
relations on pairs of partitions, depending on the two parameters $a,b$. 
Our aim is to provide an explicit combinatorial description for these pre-orders.

A familar example of a pre-order relation on pairs of partitions is 
given by the ``dominance order'' which already appeared in the work of 
Dipper--James--Murphy \cite{DJM3}. We shall see that, in our setting, 
this corresponds precisely to the ``asymptotic case'' where  $b$ is
large with respect to $a$ (more precisely, $b>(n-1)a>0$). At another  
extreme where $a=b$, our pre-order admits a geometric 
interpretation as mentioned above. The main results of this paper 
will deal with general choices of $a,b$. See also Bonnaf\'e--Jacon 
\cite{boja} for a discussion of the combinatorics around the 
associated cellular structures in these cases. 

In Section~\ref{sec2}, we recall the precise definition of Lusztig's 
families and the pre-order $\preceq_L$ on $\Irr(W)$. Following 
\cite[\S 6.5]{gepf},  \cite[\S 4]{mycarp}, we work with a definition 
of the invariants $\ba_E$ which is independent of the theory of 
generic Iwahori--Hecke algebras. 

In Section~\ref{sec1}, we study certain pre-order relations on pairs of 
partitions which are defined in a purely combinatorial fashion. This is 
done in the framework provided by the combinatorics of Lusztig's 
``symbols''. The most difficult result about these pre-orders is 
Proposition~\ref{cor11} which generalises a familiar property of the 
dominance order for partitions. As far as we are aware, this is a 
new result; the proof will be given in Sections~\ref{secproof1} and 
\ref{secproof2}. 

In Section~\ref{secasym}, we establish the relation between the
Dipper--James--Murphy dominance order and our pre-order relations.

In Section~\ref{sec3}, we prove the main results for $W$ of type $B_n$,
which show that the combinatorial constructions in Section~\ref{sec1} 
indeed describe the pre-order $\preceq_L$. This will be complemented 
in Section~\ref{sec3a} by the discussion of examples and further 
interpretations of $\preceq_L$ in type $B_n$. We conclude by establishing 
some general properties of $\preceq_L$ in Section~\ref{sec4}.

Combining these new results with the known ones from \cite{klord}, we 
can draw some general conclusions about the pre-order relation 
$\preceq_L$, for any $W$ and any weight function $L$ as above. 
As Lusztig \cite[Chap.~4]{LuBook} has shown, the $\ba$-function is 
constant on the families of $\Irr(W)$; this is one of the key properties
of this function. In Section~\ref{sec4}, we obtain the following 
refinement:

{\em Let $E,E' \in \Irr(W)$ be such that $E \preceq_L E'$. Then $\ba_{E'}
\leq \ba_E$, with equality if and only if $E,E'$ belong to the same Lusztig 
family.}

We remark that, conjecturally, $\preceq_L$ should coincide with the 
Kazhdan--Lusztig pre-order relation $\leq_{\cLR}$, defined using the 
Kazhdan--Lusztig basis of the associated generic Iwahori--Hecke algebra;
see Remark~\ref{remLR}. It is part of Lusztig's general conjectures in 
\cite[\S 14.2]{Lusztig03} that the $\ba$-function should satisfy a
monotony property as above but with respect to $\leq_{\cLR}$. Thus, 
it is our hope that the results in \cite{klord} and in this paper 
might provide a step towards a proof of Lusztig's conjectures.

%%%%%%%%%%%%%%%%%%%%%%%%%%%%%%%%%%%%%%%%%%%%%%%%%%%%%%%%%%%%%%%%%%%%%%%%%%%
\section{Lusztig's $\ba$-invariants and families} \label{sec2}

Let $W$ be a finite Coxeter group, with generating set $S$ and corresponding 
length function $l\colon W \rightarrow \Z_{\geq 0}$. Let  
$L \colon W \rightarrow \Z$ be a 
weight function, that is, we have $L(ww')=L(w)+L(w')$ whenever $w,w'\in W$ 
are such that $l(ww')=l(w)+l(w')$.  
(This is the setting of Lusztig \cite{Lusztig03}.) 

Throughout this paper, we shall assume that
\begin{center}
\fbox{$\;L(s)\geq 0 \qquad \mbox{for all $s \in S$}.\;$}
\end{center}
If, furthermore, there is some $a\in \Z$ such that $a>0$ and $L(s)=a$
for all $s \in S$, then we say that we are in the {\em equal parameter 
case}.

Let $\Irr(W)$ be the set of (complex) irreducible representations of $W$
(up to isomorphism). Having fixed $L$ as above, we shall
define a function
\[ \Irr(W) \rightarrow \Z_{\geq 0}, \qquad E \mapsto \ba_E.\]
We need one further piece of notation. Let $T=\{wsw^{-1}\mid w
\in W,s\in S\}$ be the set of all reflections in $W$. Let $S'\subseteq S$
be a set of representatives of the conjugacy classes of $W$ which are
contained in $T$. For $s \in S'$, let $N_s$ be the cardinality of the
conjugacy class of $s$; thus, $|T|= \sum_{s\in S'} N_s$. Now let $E \in
\Irr(W)$ and $s\in S'$. Since $s$ has order $2$, it is clear that
$\mbox{trace}(s,E)\in \Z$. Hence, by a well-known result in the character
theory of finite groups, the quantity $N_s\mbox{trace}(s,E)/\dim E$ is an
integer. Thus, we can define
\[ \omega_L(E):=\sum_{s \in S'} \frac{N_s\, \mbox{trace}(s,E)}{\dim E}\,
L(s) \in \Z.\]
(Note that this does not depend on the choice of the set of representatives
$S' \subseteq S$.)

\begin{defn} \label{defafun} We define a function $\Irr(W) \rightarrow
\Z$, $E \mapsto \ba_E$, inductively as follows. If $W=\{1\}$,
then $\Irr(W)$ only consists of the unit representation (denoted 
$1_W$) and we set $\ba_{1_W}:=0$.  Now assume that $W \neq \{1\}$ 
and that the function $M \mapsto \ba_M$ has already been defined for
all proper parabolic subgroups of $W$. Then, for any $E \in \Irr(W)$, we
can define
\[ \ba_E^\prime:= \max\{\ba_M \mid  M \in \Irr(W_J)
\mbox{ where } J \subsetneqq S \mbox{ and } M \uparrow E\}.\]
The notation $M \uparrow E$ is a shorthand for 
$E\mid \mbox{Ind}_{W_J}^W(M)$. 

Finally, we set
\[ \renewcommand{\arraystretch}{1.2} \ba_E:=
\left\{ \begin{array}{cl} \ba_E^\prime & \quad
\mbox{if $\ba_{E\otimes \sgn}^\prime-\ba_E^\prime
\leq \omega_L(E)$},\\\ba_{E \otimes \sgn}^\prime-\omega_L(E) &
\quad \mbox{otherwise},  \end{array}\right.\]
where $\sgn$ denotes the sign representation.
\end{defn}

\begin{rem} \label{defafun1} One immediately checks that the function
$E \mapsto \ba_E$ satisfies the following conditions:
\[ \ba_E \geq \ba_E^\prime \geq 0 \quad \mbox{and}
\quad \ba_{E \otimes \sgn}-\ba_E=\omega_L(E) \quad
\mbox{for all $E \in \Irr(W)$}.\]
This also shows that $\ba_E \geq \ba_M$ if $M \uparrow E$ where 
$M \in \Irr(W_J)$ and $J \subsetneqq S$.
\end{rem}

\begin{exmp} \label{expaasym} (a) If $L(s)=0$  for all $s \in S$, then
$\ba_E=0$ for any $E \in \Irr(W)$.

(b) The unit representation has $\ba$-invariant $0$ and the sign 
representation has $\ba$-invariant $L(w_0)$ where $w_0 \in W$ is the 
longest element; we have $0 \leq \ba_E \leq L(w_0)$ for all $E \in 
\Irr(W)$.  (See \cite[\S 1.3]{GeNico} for details.)
\end{exmp}

We just remark that Lusztig \cite{Lusztig79b}, \cite{Lusztig03} originally 
defined ``$\ba$-invariants'' $\ba_E$ using the ``generic degrees'' of the 
generic Iwahori--Hecke algebra associated with $W,L$ (see 
Remark~\ref{remAa}). The fact that this is equivalent to 
Definition~\ref{defafun} is shown in \cite[Remark~4.3]{mycarp}. The above 
definition will be sufficient for the purposes of this paper.

It will be convenient to introduce the following notation. Let $J 
\subseteq S$, $M \in \Irr(W_J)$ and $E \in \Irr(W)$. Then we write
$M \rightsquigarrow_L E$ if $M \uparrow E$ and $\ba_M=\ba_E$. 

\begin{defn}[Lusztig \protect{\cite[4.2]{LuBook}}] \label{family2}
The partition of $\Irr(W)$ into ``families'' is defined as follows. When
$W= \{1\}$, there is only one family; it consists of the unit representation
of $W$. Assume now that $W \neq \{1\}$ and that families have already been
defined for all proper parabolic subgroups of $W$. Then $E,E' \in \Irr(W)$
are said to be in the same family for $\Irr(W)$ if there exists a sequence
$E= E_0,E_1, \ldots, E_m=E'$ in $\Irr(W)$ such that, for each $i \in \{1,2,
\ldots,m\}$, the following condition is satisfied. There exists a subset
$I_i \subsetneqq S$ and $M_i,M_i' \in \Irr(W_{I_i})$, where $M_i$,
$M_i'$ belong to the same family of $\Irr(W_{I_i})$, such that
either
\begin{align*}
 M_i \rightsquigarrow_L E_{i-1} \qquad &\mbox{and} \qquad 
M_i' \rightsquigarrow_L E_i\\
\intertext{or}
M_i \rightsquigarrow_L E_{i-1} \otimes \sgn \qquad &\mbox{and} \qquad 
M_i' \rightsquigarrow_L E_i\otimes \sgn.
\end{align*}
\end{defn}

Note that it is clear from this definition that, if $\cF\subseteq \Irr(W)$ is
a family, then $\cF \otimes \sgn:=\{E \otimes \sgn \mid E \in \cF\}$ 
is a family, too.

\begin{defn} \label{mydef} Following \cite{klord}, we define a relation 
$\preceq_L$ on $\Irr(W)$ inductively as follows. If $W=\{1\}$, then 
$\Irr(W)$ only consists of the unit representation and this is related to 
itself.  Now assume that $W \neq\{1\}$ and that $\preceq_L$ has already 
been defined for all proper parabolic subgroups of $W$. Let $E,E'\in
\Irr(W)$. Then we write $E\preceq_L E'$ if there is a sequence $E=E_0,
E_1,\ldots, E_m=E'$ in $\Irr(W)$ such that, for each $i \in \{1,2,
\ldots,m\}$, the following condition is satisfied. There exists a 
subset $I_i \subsetneqq S$ and $M_i, M_i'\in \Irr(W_{I_i})$, where 
$M_i \preceq_L M_i'$ within $\Irr(W_{I_i})$, such that either 
\begin{align*}
M_i \uparrow E_{i-1} \qquad &\mbox{and} 
\qquad M_i' \rightsquigarrow_L E_i\\\intertext{or}
M_i\uparrow E_i \otimes \sgn \qquad &
\mbox{and} \qquad M_i' \rightsquigarrow_L E_{i-1}\otimes \sgn.
\end{align*}
\end{defn}

It is already remarked in \cite{klord} that, if $E,E'$ belong to the same 
family, then $E \preceq_L E'$ and $E'\preceq_L E$. In Corollary~\ref{ordfam}
we will see that the converse also holds. Note that this is not clear 
from the definitions.

\begin{rem} \label{remLR} Given the weight function $L$, one can define 
pre-order relations $\leq_{\cL}$, $\leq_{\cR}$, $\leq_{\cLR}$ on $W$, 
using the Kazhdan--Lusztig basis of the associated Iwahori--Hecke 
algebra. The corresponding equivalence classes are called the left, 
right and two-sided cells of $W$; see Lusztig \cite{Lusztig03}. As 
explained in \cite[\S 2]{klord}, the two-sided relation $\leq_{\cLR}$ 
on $W$ induces a pre-order relation on $\Irr(W)$ which we denote by the 
same symbol. It is shown in \cite[Prop.~3.4]{klord} that we have the 
following implication, where $E,E'\in \Irr(W)$:
\[ E\preceq_L E' \qquad \Rightarrow \qquad E \leq_{\cLR} E'.\]
Conjecturally, the reverse implication should also hold. In the equal 
parameter case, this is proved in \cite[Theorem~4.10]{klord}. As far
as unequal parameters are concerned, it is known to be true for any $L$ 
and $W$ of type $F_4$ or $I_2(m)$; see \cite[\S 3]{klord}. In 
Example~\ref{expasym2}, we will see that this also holds for an infinite
collection of weight functions in type $B_n$. A general proof of this 
conjecture, for any weight function $L$, would be a major breakthrough
in this theory.
\end{rem}

\begin{exmp} \label{expordtriv} Assume that $L(s)=0$ for all $s \in S$.
By Example~\ref{expaasym} we have $\ba_E=0$ for all $E \in \Irr(W)$. This 
implies that $E \preceq_L E'$ for all $E,E'\in \Irr(W)$; in particular,
the whole set $\Irr(W)$ is a family.
\end{exmp}

Before we give further (and non-trivial) examples, we discuss two 
reduction statements which will be helpful in the determination of the 
relation $\preceq_L$.  

\begin{rem} \label{ordprod} Let $W=W_1 \times \cdots \times W_k$ be the
decomposition of $W$ into irreducible components. Correspondingly, we
have 
\[ \Irr(W)=\{ E_1 \boxtimes \cdots \boxtimes E_k\mid E_i\in\Irr(W_i)
\mbox{ for $1\leq i\leq k$}\}.\]
Now the function $E \mapsto \ba_E$ is easily seen to be compatible with 
the above decomposition, that is, we have $\ba_{E}=\ba_{E_1}+\cdots+
\ba_{E_k}$ where $\ba_{E_i}$ is defined with respect to the restriction
of $L$ to $W_i$. Furthermore, the induction of representations from parabolic 
subgroups is compatible with the above decomposition. Consequently, the
following hold, where $E_i,E_i'\in \Irr(W_i)$ for $1\leq i \leq k$.
\begin{itemize}
\item[(a)] $E_1 \boxtimes \cdots \boxtimes E_k$ and $E_1' \boxtimes \cdots 
\boxtimes E_k'$ belong to the same family of $\Irr(W)$ if and only if
$E_i$ and $E_i'$ belong to the same family of $\Irr(W_i)$ for $1 \leq i 
\leq k$.
\item[(b)] $E_1 \boxtimes \cdots \boxtimes E_k \preceq_L E_1' \boxtimes 
\cdots \boxtimes E_k'$ within $\Irr(W)$ if and only if $E_i \preceq_L 
E_i'$ within $\Irr(W_i)$ for $1 \leq i \leq k$.
\end{itemize}
Hence, it is sufficient to determine the pre-order relation $\preceq_L$ 
in the case where $(W,S)$ is irreducible. 
\end{rem}

\begin{rem} \label{ordmax} Let $E\in \Irr(W)$ and assume that there 
exists a proper subset $I \subsetneqq S$ and $M\in \Irr(W_I)$ such 
that $M \uparrow E$. Now let $I_1 \subsetneqq S$ such that $I \subseteq 
I_1$. Then, by the transitivity of induction, it is clear that there 
exists some $M_1\in \Irr(W_{I_1})$ such that
\begin{itemize}
\item[(a)] $M \uparrow M_1$ and $M_1 \uparrow E$.
\end{itemize}
Now assume that $M \rightsquigarrow_L E$. Then, by 
Remark~\ref{defafun1} and the transitivity
of induction, there exists some $M_1\in \Irr(W_{I_1})$ such that
\begin{itemize}
\item[(b)] $M \rightsquigarrow_L M_1$ and $M_1 \rightsquigarrow_L E$.
\end{itemize}
These relations immediately imply that, in the formulation of 
Definition~\ref{mydef}, we may assume without loss of generality that 
each $W_{I_i}$ is a maximal parabolic subgroup of $W$. (A similar statement 
concerning families already appeared in \cite[4.2]{LuBook}.)
\end{rem}

\begin{rem} \label{afuncsum} Assume that $(W,S)$ is irreducible and 
that $L(s)>0$ for all $s \in S$. The invariants $\ba_E$ and the 
families are explicitly known in all cases by the work of Lusztig; 
see \cite[Chap.~4]{LuBook}, \cite[\S 6.5]{gepf} (for the equal parameter 
case) and \cite[Chap.~23]{Lusztig03} (for the remaining cases). 
The article \cite{klord} explicitly describes the pre-order relation 
$\preceq_{L}$ on $\Irr(W)$, except for the case where $W$ is of
type $B_n$ and we are not in the equal parameter case. 
\end{rem}

\begin{exmp} \label{ordA} Let $n \geq 1$ and $(W,S)$ be of type $A_{n-1}$. 
Then $W$ can be identified with the symmetric group $\fS_n$ and $\Irr(W)$ 
is parametrised by the partitions of $n$. Thus, we can write
\[ \Irr(W)=\{E^\lambda \mid \lambda \mbox{ is a partition of } n\}.\]
For example, the unit representation is labelled by $(n)$ and the sign
representation is labelled by $(1^n)$. Assume now that $L$ is not 
identically zero. All generators in $S$ are conjugate in $W$ and so there 
is some $a\in \Z_{>0}$ such that $L(s)=a$ for all $s \in S$. 

Let $\lambda$ be a partition of $n$. Writing the parts of $\lambda$
as $\lambda_1\geq \lambda_2 \geq \ldots \geq \lambda_N \geq 0$ where 
$N \geq 1$, we set 
\[ n(\lambda):=\sum_{1\leq i \leq N} (i-1)\lambda_i.\]
Then we have $\ba_{E^\lambda}=n(\lambda)a$; see, for example, \cite[6.5.8, 
9.4.5]{gepf}. By Lusztig \cite[4.4]{LuBook}, each family of $\Irr(W)$ 
consists of a single representation. Now recall that the dominance
order $\trianglelefteq$ on partitions is defined as follows. Let $\lambda, 
\mu$ be two partitions of $n$. By adding zeroes if necessary, we can write
\[ \lambda=(\lambda_1\geq\lambda_2\geq\ldots \geq \lambda_N \geq 0)\qquad 
\mbox{and}\qquad \mu=(\mu_1\geq \mu_2 \geq \ldots \geq \mu_N \geq 0)\]
for some $N \geq 1$. Then we have
\[\lambda\trianglelefteq \mu\quad \stackrel{\text{def}}{\Leftrightarrow}
\quad \sum_{1\leq i \leq d} \lambda_i \leq \sum_{1\leq i \leq d} \mu_i
\quad \mbox{for all $d \in \{1,\ldots,N\}$}.\]
With this notation, the following three conditions are equivalent:
\begin{itemize}
\item[(a)] $E^\lambda \preceq_L E^\mu$.
\item[(b)] $\lambda \trianglelefteq \mu$.
\item[(c)] There exists a subset $I \subseteq S$ such that $\sgn_I 
\uparrow E^\lambda$ and $\sgn_I \rightsquigarrow_L E^\mu$.
\end{itemize}
\begin{proof}
``(a) $\Rightarrow$ (b)'' By Remark~\ref{remLR}, the assumption implies 
that $E^\lambda \leq_{\cLR} E^\mu$. Then (b) holds by 
\cite[Theorem~5.1]{mymurphy}; see also \cite[\S 2.8]{GeNico}. (Note that 
no geometric interpretation is required here.)

``(b) $\Rightarrow$ (c)'' Let $W_I\subseteq W$ be the parabolic subgroup 
which corresponds, under the identification $W \cong \fS_n$, to the Young
subgroup $\fS_{\overline{\mu}}$ where $\overline{\mu}$ denotes the 
conjugate partition. Then it is well-known that $\sgn_I \rightsquigarrow_L 
E^\mu$; see, for example, \cite[Prop.~5.4.7]{gepf}. So it remains to show 
that $\sgn_I \uparrow E^\lambda$.  Now, since $\lambda \trianglelefteq 
\mu$, we also have $\overline{\mu} \trianglelefteq \overline{\lambda}$; 
see \cite[I.1.11]{Mac}. Consequently, the Kostka number 
$\kappa_{\overline{\lambda}, \overline{\mu}}$ is non-zero; see 
\cite[I.6.4]{Mac}. This implies that $1_I \uparrow 
E^{\overline{\lambda}}$ where $1_I$ stands for the unit representation; 
see the remark following \cite[I.7.8]{Mac}. Now note that, by 
\cite[Cor.~5.4.9]{gepf}, we have $E^\lambda=E^{\overline{\lambda}}
\otimes \sgn$; furthermore, $\Ind_I^S(\sgn_I)=\Ind_I^S(1_I)\otimes\sgn$. 
Consequently, we also have $\sgn_I \uparrow E^\lambda$, as required.

``(c) $\Rightarrow$ (a)'' This is clear by the definition of $\preceq_L$.
\end{proof}
\end{exmp}

\begin{exmp} \label{bnsetup} Let $n \geq 1$ and $W=W_n$ be a Coxeter
group of type $B_n$, with generators and diagram given by 
\begin{center}
\begin{picture}(250,20)
\put( 10, 5){$B_n$}
\put( 63,13){$t$}
\put( 91,13){$s_1$}
\put(121,13){$s_2$}
\put(201,13){$s_{n{-}1}$}
\put( 65, 5){\circle*{5}}
\put( 95, 5){\circle*{5}}
\put(125, 5){\circle*{5}}
\put( 65, 7){\line(1,0){30}}
\put( 65, 3){\line(1,0){30}}
\put( 95, 5){\line(1,0){30}}
\put(125, 5){\line(1,0){20}}
\put(155,5){\circle*{1}}
\put(165,5){\circle*{1}}
\put(175,5){\circle*{1}}
\put(185, 5){\line(1,0){20}}
\put(205, 5){\circle*{5}}
\end{picture}
\end{center}
Then $\Irr(W_n)$ is naturally parametrised by bipartitions of $n$, 
that is, pairs of partitions $(\lambda,\mu)$ such that $|\lambda|+ 
|\mu|=n$; we shall write
\[ \Irr(W_n)=\{E^{(\lambda,\mu)} \mid (\lambda,\mu) \mbox{ is a 
bipartition of } n \}.\]
For example, the unit and the sign representation are labelled by 
$((n), \varnothing)$ and $(\varnothing,(1^n))$, respectively; see 
\cite[\S 5.5]{gepf}. A weight function $L \colon W_n \rightarrow \Z$ 
is specified by the two parameters 
\[ b:=L(t) \geq 0 \qquad \mbox{and} \qquad a:=L(s_i) \geq 0 \quad
\mbox{for $1\leq i \leq n-1$}.\]
Thus, the relation $\preceq_L$ will depend on $a,b$; simple examples 
show that $\preceq_L$ is really different for different values of $a,b$.
\end{exmp}

In the next section, we will begin with the study of this case by 
considering certain pre-order relations $\preceq_{a,b}$ on bipartitions. 
These will turn out to be the key for describing the relation $\preceq_L$ 
on $\Irr(W_n)$; see Theorem~\ref{mainbn}. 

%%%%%%%%%%%%%%%%%%%%%%%%%%%%%%%%%%%%%%%%%%%%%%%%%%%%%%%%%%%%%%%%%%%%%%%%%%%
\section{Ordering bipartitions} \label{sec1}

The aim of this section is to define suitable generalisations of the
dominance order for partitions to the setting of bipartitions of $n$. 
The framework for doing this is provided by the combinatorics developed 
by Lusztig in \cite[Chap.~22]{Lusztig03}.

Let us fix some notation.  As in Example~\ref{bnsetup}, we shall fix two 
integers $a,b \in \Z$ such that $a>0$ and $b \geq 0$. (The case
where $a=0$ will be treated separately; see Example~\ref{bnanull}.)
Division with remainder defines then two integers $r, b' \geq 0$  
by \[ b=ra+b'\quad\text{and}\;\; 0\leq b'<b\, .\]
Following Lusztig \cite[22.6]{Lusztig03}, given any integer $N \geq 0$, we 
define $\cM_{a,b;n}^N$ to be the set of all multisets $Z=\{z_1,z_2,\ldots,
z_{2N+r}\}$ such that the following hold:
\begin{itemize}
\item[(M1)] The entries of $Z$ are elements of $\Z_{\geq 0}$ and we have 
\[ \sum_{1\leq i \leq 2N+r} z_i=na+N^2a+N(b-a)+\binom{r}{2}a+rb'.\]
\item[(M2)] If $b'=0$, there are at least $N+r$ distinct entries in $Z$, no 
entry is repeated more than twice, and all entries of $Z$ are contained 
in $\Z a$.
\item[(M3)] If $b'>0$, then all entries of $Z$ are distinct; furthermore, 
$N$ entries of $Z$ are contained in $\Z a$ and $N+r$ entries are contained 
in $b'+\Z a$.
\end{itemize}
There is a ``shift'' operation $\cM_{a,b;n}^N \rightarrow
\cM_{a,b;n}^{N+1}$ given by 
\[ \{z_1,z_2,\ldots,z_{2N+r}\} \mapsto \{ 0,b',z_1+a,z_2+a,\ldots,
z_{2N+r}+a\}.\]
Given two multisets $Z \in \cM_{a,b;n}^{N}$ and $Z' \in 
\cM_{a,b;n}^{N'}$ (where $N,N'\geq 0$ are integers), we write 
$Z \sim Z'$ if one of $Z,Z'$ can be obtained from the other by 
a finite sequence of shift operations. This defines an equivalence
relation on the union of all multisets $\bigcup_{N \geq 0} \cM_{a,b;n}^N$.
These constructions are related to bipartitions of $n$, in the 
following way. 

%Let $\cM_{a,b;n}$ denote the set of equivalence classes; given $Z\in
%\cM_{a,b;n}^N$ (for some $N \geq 0$), we denote the corresponding 
%equivalence class by $[Z]\in \cM_{a,b;n}$.

\begin{defn}[Lusztig] \label{def0}  Let $(\lambda,\mu)$ be a 
bipartition of $n$. Choosing a sufficiently large integer $N\geq 0$ and 
adding zeroes if necessary, we can write 
\[ \lambda=(\lambda_1\geq \lambda_2 \geq \ldots \geq \lambda_{N+r} \geq 0)
\quad \mbox{and}\quad  \mu=(\mu_1\geq \mu_2 \geq \ldots \geq 
\mu_N\geq 0).\] 
Then we define $Z_{a,b}^N(\lambda,\mu)$ to be the multiset formed by the 
$2N+r$ entries 
\begin{alignat*}{2}
(&\lambda_{i}+N+r-i)a+b' &&\qquad (1\leq i \leq N+r),\\
(&\mu_{j}+N-j)a &&\qquad (1 \leq j \leq N). 
\end{alignat*}
As pointed out in \cite[22.10]{Lusztig03}, we have $Z_{a,b}^N(\lambda,
\mu)\in \cM_{a,b;n}^N$. Furthermore, if we choose another $N'$, then 
$Z_{a,b}^N(\lambda,\mu) \sim Z_{a,b}^{N'}(\lambda,\mu)$. 

The above array of $2N+r$ integers, arranged in two rows, is 
an example of Lusztig's ``symbols'', which originally appeared in the
classification of unipotent representations of finite classical
groups; see \cite[Chap.~4]{LuBook}. 
\end{defn}

\begin{exmp} \label{expmult1} Let us consider the bipartition $(4311,32)$ of
$14$. 

First assume that $a=b=1$. Then $r=1$ and $b'=0$. We can take $N=3$ and 
write our bipartition as $(4311,320)$. The corresponding multiset is
\[ Z_{1,1}^3(4311,32)=\{7,5,5,3,2,1,0\}.\]
If we take $N=5$, then we write our bipartition as $(431100,32000)$ and
the corresponding multiset is 
\[ Z_{1,1}^5(4311,32)=\{9,7,7,5,4,3,2,1,1,0,0\}.\]
This multiset is obtained from the previous one by performing two  shifts.

Now assume that $a=2$, $b=1$. Then $r=0$ and $b'=1$. We can take $N=4$ and 
write our bipartition as $(4311,3200)$. The corresponding multiset is
\[ Z_{2,1}^4(4311,32)=\{15,12,11,8,5,3,2,0\}.\]
\end{exmp}

\begin{rem} \label{rem11} 
Let $\gamma$ be the right hand side of the formula in (M1). If we 
arrange the entries of a multiset $Z\in \cM_{a,b;n}^N$ in decreasing order, 
we obtain a partition of~$\gamma$. Thus, all multisets in $\cM_{a,b;n}^N$ 
can be regarded as partitions of $\gamma$. Consequently, the set 
$\cM_{a,b;n}^N$ is partially ordered by the dominance order and it makes 
sense to write $Z\trianglelefteq Z'$ for $Z,Z'\in\cM_{a,b;n}^N$.

One easily checks that if $Z_1,Z_2 \in \cM_{a,b;n}^{N}$ and 
$Z_1',Z_2'\in \cM_{a,b;n}^{N'}$ (where $N,N'\geq 0$ are integers),
then the following implication holds:
\[ Z_1 \sim Z_1', \quad Z_2' \sim Z_2' \quad \mbox{and} \quad Z_1 
\trianglelefteq Z_2\qquad\Rightarrow\qquad Z_1'\trianglelefteq Z_2'.\]
Thus, $\trianglelefteq$ can be regarded as a partial order on the 
equivalence classes of multisets as above, modulo the shift operation. 
\end{rem}

We are now ready to define the desired generalisations of the dominance
order.

\begin{defn} \label{def1} We define a pre-order relation $\preceq_{a,b}$ on 
the set of bipartitions of $n$, as follows. Let $(\lambda,\mu)$ and
$(\lambda', \mu')$ be bipartitions of $n$. Choose a sufficiently large 
integer $N \geq 0$ and consider the multisets $Z_{a,b}^N(\lambda,\mu)$, 
$Z_{a,b}^N(\lambda',\mu')$ in Definition~\ref{def0}. Then 
\[ (\lambda,\mu)\preceq_{a,b} (\lambda',\mu') \qquad 
\stackrel{\text{def}}{\Leftrightarrow} \qquad Z_{a,b}^N(\lambda,\mu)
\trianglelefteq Z_{a,b}^N(\lambda',\mu').\]
We say that the bipartitions $(\lambda,\mu)$ and $(\lambda',\mu')$ 
belong to the same ``combinatorial family'' if $Z_{a,b}^N(\lambda,\mu)=
Z_{a,b}^N(\lambda',\mu')$. Then $\preceq_{a,b}$ induces a partial order
on the set of combinatorial families which we denote by the same symbol. 
Note that these definitions do not depend on the choice of $N$. 
\end{defn}

(In a somewhat different context, similar pre-orders on pairs of 
partitions are considered in \cite[\S 5.5]{GeNico} and also 
by Chlouveraki et al.  \cite[\S 5]{MG}.)

\begin{rem} \label{expsing} Assume that $b'>0$. Let $(\lambda,\mu)$ be
a bipartition of $n$ and $Z_{a,b}^N(\lambda,\mu)$ be the corresponding
multiset (for some $N \geq 0$). Then the expressions in 
Definition~\ref{def0} show that $(\lambda,\mu)$ is uniquely determined 
by $Z_{a,b}^N(\lambda,\mu)$. Consequently, all combinatorial families 
are singleton sets in this case, and $\preceq_{a,b}$ is a partial order.
\end{rem}

On the other hand, if $b'=0$, then simple examples show that, in 
general, the combinatorial families will contain more than one element. 

Also note that if $b'=0$, then the following equivalence holds
\[ (\lambda,\mu) \preceq_{a,b} (\lambda',\mu') \qquad \Leftrightarrow 
\qquad (\lambda,\mu) \preceq_{1,r} (\lambda',\mu')\]
for all bipartitions $(\lambda,\mu)$ and $(\lambda',\mu')$ of $n$.

\begin{exmp} \label{expsubasy} Assume that  $a=1$ and $b=n-1$; 
we call this the ``sub-asymptotic'' case. We have $r=n-1$ and $b'=0$. Let 
us describe the combinatorial families in this case. 
Let $\cF_0:=\{(1^k, l)\mid k,l\geq 0\;\text{and}\; k+l=n\}$. We claim that:
\begin{itemize}
\item[(i)] $\cF_0$ is a combinatorial family;
\item[(ii)] all the other combinatorial families are singleton sets.
\end{itemize}
Indeed, let $(\lambda,\mu)=(1^k,(l))$ where $n=k+l$. Let $N:=n$. The 
corresponding multiset $Z_{a,b}^n(\lambda,\mu)$ contains the entries
\begin{alignat*}{2}
\lambda_i+N+r-i&=\lambda_i+2n-1-i && \qquad (1\leq i \leq 2n-1),\\
\mu_j+N-j&=\mu_j+n-j && \qquad (1\leq j \leq n).
\end{alignat*}
The first row of the above array yields the entries $\{0,1,2,\ldots,
2n-1\} \setminus \{2n-k-1\}$; the second row yields the entries 
$\{0,1,2,\ldots, n-2\} \cup\{l+n-1\}$. Thus, since $l+n-1=2n-k-1$,
we obtain
\begin{align*}
Z_{a,b}^n(\lambda,\mu)&=\{0,1,2,\ldots,2n-1\} \cup \{0,1,2,\ldots,n-2\}\\
&=\{ 0,0,1,1,2,2,\ldots,n-2,n-2,n-1,n,n+1,\ldots,2n-1\}.
\end{align*}
Hence, all bipartitions of the form $((1^k),(l))$ (where $n=k+l$)
belong to the same combinatorial family. Conversely, let $(\lambda,\mu)$
be a bipartition which is not of this form. Suppose first that 
$\lambda_1>1$. Then $\lambda_1+N+r-1>2n-1$ and so $(\lambda,\mu)$ is not
in the combinatorial family of $\cF_0$. Similarly, if $\mu$ has at least 
two (non-zero) parts, then the sequence $\{\mu_j+N-j \mid 1 \leq j \leq N\}$
will not contain all of the numbers $0,1,2,\ldots,n-2$ and, hence, these 
numbers will not all be repeated twice in $Z_{a,b}^n(\lambda,\mu)$. So, 
again, $(\lambda, \mu)$ is not in the combinatorial family of $\cF_0$. 
Thus, (i) is proved. The proof of (ii) is a similar combinatorial 
exercise; the precise argument is along the lines of the discussion in 
Section~\ref{secasym}. We omit further details.
\end{exmp}

\begin{exmp} \label{expab1} Assume that $a=1$ and $b \in 
\{0,1\}$. Thus, we are either in the equal parameter case or in the
case which is relevant to groups of type $D_n$ (see Example~\ref{expdn}). 
We have $b'=0$ and $r \in \{0,1\}$. 

Let $(\lambda,\mu)$ be a bipartition of $n$ and write
\[\lambda=(\lambda_1\geq \lambda_2\geq \cdots \geq \lambda_{N+r} \geq 0),
\qquad  \mu=(\mu_1\geq \mu_2\geq \ldots \geq \mu_N \geq 0)\]
for some $N \geq 0$. As in \cite[Chap.~4]{LuBook}, we say that
$(\lambda,\mu)$ is ``special''  (or ``$(a,b)$-special'' if $a,b$ are not
clear from the context), if the following conditions hold:
\begin{alignat*}{2}
\lambda_i+1 \geq \mu_i \geq \lambda_{i+1} & \quad (1\leq i \leq N) &&
\qquad \mbox{if $a=b=1$},\\
\lambda_N\geq \mu_N \mbox{ and } \lambda_i \geq \mu_i \geq 
\lambda_{i+1}-1 & \quad (1\leq i \leq N-1) && 
\qquad \mbox{if $a=1$, $b=0$}.
\end{alignat*}
Then every combinatorial family contains a unique special bipartition. 
\end{exmp}

%Furthermore, one notices that, if $(\lambda,\mu)$ and $(\lambda',\mu')$ 
%are special bipartitions, then the definition of $\preceq_{1,b}$ can be 
%rewritten as follows:
%\[(\lambda,\mu) \preceq_{1,b} (\lambda',\mu')\quad 
%\Leftrightarrow \quad \left\{\begin{array}{rcl} 
%\displaystyle \sum_{1\leq i\leq d} (\lambda_i+\mu_i) &\leq &\displaystyle 
%\sum_{1\leq i\leq d} (\lambda_i'+\mu_i')\\ \displaystyle 
%\lambda_d+ \sum_{1\leq i<d} (\lambda_i+\mu_i) &\leq &\displaystyle 
%\lambda_d'+ \sum_{1\leq i<d} (\lambda_i'+\mu_i') \\ 
%\multicolumn{2}{r}{\mbox{(for all $d \geq 1$)}} &\end{array} \right.\]
%where $\lambda=(\lambda_1\geq \lambda_2 \geq \ldots \geq 0)$,
%$\lambda'= (\lambda_1' \geq \lambda_2' \geq \ldots \geq \ 0)$,
%$\mu=(\mu_1\geq \mu_2 \geq \ldots \geq 0)$ and $\mu'=(\mu_1'\geq
%\mu_2'\geq \ldots \geq 0)$. This formulation is due to Spaltenstein 
%\cite[\S 4]{Spalt}.

Next recall from Example~\ref{ordA} the definition of the 
numerical invariant $n(\lambda)$ associated with a partition $\lambda$
of $n$. This invariant has the following compatibility property with the 
dominance order: Let $\lambda$ and $\mu$ be partitions of $n$ such that 
$\lambda \trianglelefteq \mu$. Then $n(\mu)\leq n(\lambda)$, with equality 
only for $\lambda=\mu$; see \cite[Exc.~5.6]{gepf}. We will now see that 
the $\ba$-invariants attached to bipartitions by Lusztig 
\cite[\S 22]{Lusztig03} satisfy a similar compatibility property.

\begin{defn} \label{def2} Let $N \geq 0$ and $Z \in \cM_{a,b;n}^N$. 
Let $Z^0$ be the unique multiset in $\cM_{a,b;0}^N$; its  entries are
\[ 0,\,a,\,2a,\,\ldots,\,(N-1)a,\,b',\,a+b',\,2a+b',\,\ldots,\,(N+r-1)a+b'.\]
(If $N=0$, then these entries are $ia+b'$ for $0 \leq i \leq r-1$.)
Then we define 
\[ \ba_{a,b}(Z):=\sum_{1\leq i \leq 2N+r} (i-1)z_i-
\sum_{1\leq i\leq 2N+r} (i-1)z_i^0,\]
where $z_1,z_2,\ldots,z_{2N+r}$ are the entries of $Z$ and $z_1^0,z_2^0,
\ldots,z_{2N+r}^0$ are the entries of $Z^0$, both arranged in decreasing 
order. 

Now let $(\lambda,\mu)$ be a bipartition on $n$. Choose $N$ sufficiently 
large and consider the multiset $Z_{a,b}^N(\lambda,\mu)$ in 
Definition~\ref{def0}. Then we define 
\[ \ba_{a,b}(\lambda,\mu):=\ba_{a,b}\bigl(Z_{a,b}^N(\lambda,\mu)\bigr).\]
\end{defn}

\begin{rem}  \label{lem12} (i)
Note that the above formula coincides with that given by Lusztig 
\cite[Prop.~22.14]{Lusztig03}; in particular, $\ba_{a,b}(\lambda,\mu)$ 
does not depend on the choice of $N$. 

%\label{lem12} 
(ii) Let $N \geq 0$ and $Z,Z' \in \cM_{a,b;n}^N$ be 
such that  $Z \trianglelefteq Z'$. Then $\ba_{a,b}(Z')\leq \ba_{a,b}(Z)$,
with equality only if $Z=Z'$. (This follows by an argument entirely analogous to that
for partitions and the invariants $n(\lambda)$; see \cite[Exc.~5.6]{gepf}.)

(iii) Consequently, if $(\lambda,\mu)$ and $(\lambda',\mu')$ are bipartitions 
of $n$ such that $(\lambda,\mu) \preceq_{a,b} (\lambda',\mu')$, then 
$\ba_{a,b}(\lambda',\mu')\leq \ba_{a,b}(\lambda,\mu)$, with equality only 
if $(\lambda,\mu)$ and $(\lambda',\mu')$ belong to the same combinatorial 
family. 
\end{rem}

Finally, we come to the most subtle property of the pre-order relation
$\preceq_{a,b}$. Recall that, if $\lambda$ and $\mu$ are partitions of 
$n$ such that $\lambda \trianglelefteq \mu$, then we also have 
$\overline{\mu} \trianglelefteq \overline{\lambda}$; see 
\cite[I.1.11]{Mac}. Here, for any partition $\nu$, we denote by 
$\overline{\nu}$ the conjugate partition. Our task is to generalise this 
to bipartitions. Following Lusztig \cite[22.8]{Lusztig03}, we can define 
a conjugation operation on multisets as follows. Let $Z=\{z_1,z_2, 
\ldots, z_{2N+r}\}\in\cM_{a,b;n}^N$. Let $t \geq 0$ be an integer which 
is large enough such that the multiset 
\[ \{ta+b'-z_1,ta+b'-z_2,\ldots,ta+b'-z_{2N+r}\}\]
is contained in the multiset 
\[ \{0,a,2a,\ldots,ta,b',a+b',2a+b', \ldots,ta+b'\}.\]
Then we define $\overline{Z}$ to be the complement of the first of the 
above two multisets in the second. As pointed out in \cite[22.8]{Lusztig03}, 
we have 
\[ \overline{Z}\in \cM_{a,b;n}^{t+1-N-r}.\] 
(Note that (i) $\overline{\overline{Z}}\sim Z$ and (ii) $\overline{Z}_{(t)}
\sim\overline{Z}_{(t')}$, where $\overline{Z}_{(t)}$ and $\overline{Z}_{(t')}$
denote the conjugates of $Z$ calculated for the integers $t$ and $t'$
respectively.)

Also note the following interpretation in terms of bipartitions.

\begin{lem} \label{lem11a} Let $(\lambda,\mu)$ be a bipartition of $n$. 
Then $\overline{Z_{a,b}^N(\lambda,\mu)} \sim Z_{a,b}^N(\overline{\mu},
\overline{\lambda})$.
\end{lem}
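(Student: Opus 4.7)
\emph{Proof plan.} The plan is to prove the statement by a direct calculation, reducing it to the classical complementarity of $\beta$-sets for conjugate partitions. The one ingredient needed is the following elementary fact: for a partition $\nu$ with at most $n$ parts and with $\nu_1 \leq m$, the set $B(\nu, n) := \{\nu_i + n - i \mid 1 \leq i \leq n\}$ is contained in $\{0, 1, \ldots, m + n - 1\}$, and its complement in that interval equals $\{m + n - 1 - y \mid y \in B(\overline{\nu}, m)\}$; this is a standard property of $\beta$-sets (see, e.g., \cite[\S I.1]{Mac}).

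With this in hand, I would fix $N$ large enough that $\lambda, \mu, \overline{\lambda}, \overline{\mu}$ all fit inside the relevant rows (so that $\lambda_1 \leq N$, $\mu_1 \leq N + r$, and both partitions have at most $N + r$, respectively $N$, parts), set $Z := Z_{a,b}^N(\lambda, \mu)$, and choose an integer $t$ large enough for the conjugate $\overline{Z}$ to be defined as in the discussion preceding the statement. Set $N' := t + 1 - N - r \geq 0$. The key observation is that under $z \mapsto ta + b' - z$, the $N + r$ entries of $Z$ in $b' + \Z a$ (indexed by $\lambda$) are carried into $\Z a$, while the $N$ entries in $\Z a$ (indexed by $\mu$) are carried into $b' + \Z a$. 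Taking complements row by row in $\{0, a, \ldots, ta\}$ and in $\{b', a + b', \ldots, ta + b'\}$ respectively, and applying the $\beta$-set complementarity twice---once to $\lambda$ with $(n, m) = (N + r, N')$, and once to $\mu$ with $(n, m) = (N, N' + r)$---I would conclude that $\overline{Z}$ consists of the $N' + r$ entries $(\overline{\mu}_i + N' + r - i)a + b'$ together with the $N'$ entries $(\overline{\lambda}_j + N' - j)a$. Comparing with Definition~\ref{def0}, these are exactly the entries of $Z_{a,b}^{N'}(\overline{\mu}, \overline{\lambda})$, and hence $\overline{Z} \sim Z_{a,b}^N(\overline{\mu}, \overline{\lambda})$ by the shift equivalence.

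The only real subtlety is bookkeeping: the roles of the two rows are interchanged under conjugation, which is exactly what forces the order $(\overline{\mu}, \overline{\lambda})$ (rather than $(\overline{\lambda}, \overline{\mu})$) in the conjugate symbol; and one must verify that the row sizes $N' + r$ and $N'$ of $\overline{Z}$ match those prescribed by Definition~\ref{def0} for a symbol at level $N'$. Beyond this careful tracking of indices, and ensuring that $N$ and $t$ are chosen sufficiently large for every inclusion and multiset manipulation to make sense, no serious obstacle is anticipated.
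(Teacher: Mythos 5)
Your proposal is correct, and the calculation goes through exactly as you sketch: under $z\mapsto ta+b'-z$ the two residue classes $\Z a$ and $b'+\Z a$ swap, and applying the $\beta$-set complementarity separately to each class (with the matching parameters $(N+r,\,N')$ and $(N,\,N'+r)$, where $N'=t+1-N-r$) produces precisely the entries of $Z_{a,b}^{N'}(\overline{\mu},\overline{\lambda})$. However, this is a genuinely different route from the paper's. The paper disposes of the lemma in one line by appealing to two external facts: Lusztig's \cite[22.18]{Lusztig03}, which identifies the conjugation operation on multisets with tensoring by the sign character on the representation-theoretic side, and \cite[5.5.6]{gepf}, which records that $E^{(\lambda,\mu)}\otimes\sgn=E^{(\overline{\mu},\overline{\lambda})}$ (this is Lemma~\ref{bnsign} in the paper). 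In other words, the paper's proof routes through the representation theory of $W_n$ and its Hecke algebra, treating the combinatorial identity as a corollary of the behaviour of the sign twist; your proof is a self-contained, purely combinatorial verification that never leaves the world of multisets. Your approach has the advantage of being elementary and transparent -- it makes the swap of $\lambda$ and $\mu$, and the appearance of the conjugate partitions, mechanically visible from the residue-class bookkeeping -- at the cost of carrying out the double application of $\beta$-set complementarity explicitly. The paper's approach is shorter on the page but imports representation-theoretic input that is not logically needed for a statement which is, in the end, entirely about finite subsets of $\Z_{\geq 0}$. Both are valid; yours is closer in spirit to the machinery set up in Section~\ref{secproof2} (where exactly the map $X\mapsto\widehat{X}_M$ is studied), and one could even view your argument as the natural extension of that section's manipulations to the two-rowed setting.
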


\begin{proof} This follows from \cite[22.18]{Lusztig03} and 
\cite[5.5.6]{gepf}.
\end{proof}

We can now state the following fundamental compatibility property. 

\begin{lem} \label{lem11} In the above setting, let $Z,Z' \in \cM_{a,b;n}^N$
be such that $Z \trianglelefteq Z'$. Let $t$ be such that both 
$\overline{Z}$ and $\overline{Z}'$ are defined and in $\cM_{a,b;n}^{t+1-
N-r}$. Then we have $\overline{Z}' \trianglelefteq \overline{Z}$. 
\end{lem}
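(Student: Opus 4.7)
The plan is to factor the symbol conjugation $Z \mapsto \overline{Z}$ into two elementary order-theoretic operations on multisets of nonnegative integers, each of which is easily analysed. Let
\[ A_t \;:=\; \{0,a,2a,\ldots,ta\} \cup \{b',a+b',\ldots,ta+b'\}\]
(taken as a multiset) be the ambient set and put $\phi(y):=(ta+b')-y$. The map $\phi$ is an involution of $A_t$ which swaps its two rows, and directly from the definition we have $\overline{Z} = A_t\setminus\phi(Z)$ and $\overline{Z}' = A_t\setminus\phi(Z')$. Since $Z,Z'\in\cM_{a,b;n}^N$ share size $M=2N+r$ and total $\gamma$ by (M1), the intermediates $\phi(Z),\phi(Z')$ share size $M$ and total $M(ta+b')-\gamma$, and their complements share size and total in $\cM_{a,b;n}^{t+1-N-r}$. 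All dominance comparisons below therefore occur between multisets of equal size and equal total.

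\emph{Step 1: $\phi$ preserves dominance.} Writing $\sigma_d(W)$ for the sum of the top $d$ entries of a multiset $W$, the top $d$ entries of $\phi(W)$ are the images under $\phi$ of the bottom $d$ entries of $W$, whence
\[ \sigma_d\bigl(\phi(W)\bigr) \;=\; d(ta+b') - \sigma_M(W) + \sigma_{M-d}(W). \]
Combined with $\sigma_M(Z)=\sigma_M(Z')$, the hypothesis $Z\trianglelefteq Z'$ (that is, $\sigma_{M-d}(Z)\leq\sigma_{M-d}(Z')$ for every $d$) translates immediately into $\sigma_d(\phi(Z))\leq\sigma_d(\phi(Z'))$, i.e.\ $\phi(Z)\trianglelefteq\phi(Z')$.

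\emph{Step 2: complementation in $A_t$ reverses dominance.} More generally, for any fixed multiset $A$ and any sub-multisets $X,Y\subseteq A$ of common size and common total with $X\trianglelefteq Y$, I claim $A\setminus Y \trianglelefteq A\setminus X$. To see this I would use the elementary identity
\[ \sigma_d(W) \;=\; \min_{\theta\in\Z}\bigl[\,d\theta + \textstyle\sum_{w\in W}\max(w-\theta,0)\,\bigr], \]
valid for any multiset of nonnegative integers $W$ and any $d\geq 0$ (the minimum is attained at any $\theta$ between the $d$-th and $(d{+}1)$-st largest entries of $W$). Applying it to $W=A\setminus X$ and expanding $\sum_{w\in A\setminus X}\max(w-\theta,0) = \sum_{w\in A}\max(w-\theta,0) - \sum_{w\in X}\max(w-\theta,0)$, it suffices to verify that $\sum_{w\in X}\max(w-\theta,0)\leq\sum_{w\in Y}\max(w-\theta,0)$ for every $\theta$. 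But $w\mapsto\max(w-\theta,0)$ is a convex function, so this is the Hardy--Littlewood--P\'olya inequality for majorization; one can also prove it directly by a partial-sum split at the threshold $\theta$, entirely analogous to the classical derivation of $\overline{\mu}\trianglelefteq\overline{\lambda}$ from $\lambda\trianglelefteq\mu$ for ordinary partitions, compare \cite[I.1.11]{Mac}.

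Chaining the two steps, $Z\trianglelefteq Z'$ yields $\phi(Z)\trianglelefteq\phi(Z')$ by Step~1, and hence $\overline{Z}'=A_t\setminus\phi(Z')\trianglelefteq A_t\setminus\phi(Z)=\overline{Z}$ by Step~2, which is precisely the claim. I expect the real obstacle to sit in Step~2: the decomposition and Step~1 are mere bookkeeping, but to handle the complement one must find an efficient characterization of $\sigma_d$ that is manifestly monotone under majorization. The min-over-$\theta$ formula, built on the convexity of $\max(\cdot,0)$, accomplishes exactly this; without such a device one is pushed into the awkward case analysis on the number of ``large'' parts that appears in the classical partition-conjugate argument.
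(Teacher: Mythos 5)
Your argument is correct, and it takes a genuinely different route from the paper's. The paper treats the cases $b'=0$ and $b'>0$ separately: for $b'=0$ it enlarges the class of multisets, characterises adjacent pairs under $\trianglelefteq$ (Lemma~\ref{adja1}), and checks the claim on adjacent pairs directly; for $b'>0$ it translates into $\beta$-sets, proves that complementation in an interval reverses dominance via conjugate partitions (Lemma~\ref{strange1}), proves that adjoining a common disjoint set preserves dominance (Corollary~\ref{strange2}), and then carefully introduces the padding set $U$ so that $\overline{Z}=\widehat{(U\cup Z)}_M$. You instead factor the conjugation uniformly as the reflection $\phi(y)=(ta+b')-y$ followed by multiset complementation in $A_t$, handle $\phi$ with the elementary observation that a point reflection combined with equal totals preserves $\trianglelefteq$, and handle complementation with one general lemma — for sub-multisets $X,Y\subseteq A$ of equal size and sum, $X\trianglelefteq Y$ implies $A\setminus Y\trianglelefteq A\setminus X$ — proved via the min-representation $\sigma_d(W)=\min_\theta\bigl[d\theta+\sum_{w\in W}\max(w-\theta,0)\bigr]$ and the convexity (Hardy--Littlewood--P\'olya) inequality. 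This eliminates the case split on $b'$ and sidesteps both the adjacency analysis and the $\beta$-set detour. The paper's proof, in exchange, yields an adjacency description of $\widetilde{\cM}_{a,b;n}^N$ and stays inside the $\beta$-set formalism used elsewhere in the text, while yours is shorter and more uniform at the price of invoking majorization machinery. Two small polish points: restrict the min to $\theta\geq 0$ (or to $0\leq d\leq \#W$) to avoid the degenerate endpoint, and you can replace the HLP citation with the direct identity $\sum_{w\in X}\max(w-\theta,0)=\max_{0\leq k\leq\#X}\bigl[\sigma_k(X)-k\theta\bigr]$, from which the required comparison with $Y$ is immediate from $X\trianglelefteq Y$.
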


\begin{proof} For the proof we distinguish the two cases where
either $b'=0$ or $b'>0$. The details of the argument will be given
in Section~\ref{secproof1}. 
\end{proof}

As an immediate consequence we obtain the following key property of
the relation $\preceq_{a,b}$. This is one of the crucial ingredients
in the proof of Theorem~\ref{mainbn}.

\begin{prop} \label{cor11} Let $(\lambda,\mu)$ and $(\lambda',\mu')$ be 
bipartitions of $n$. Then 
\[ (\lambda,\mu)\preceq_{a,b} (\lambda',\mu') \qquad \Leftrightarrow 
\qquad (\overline{\mu}',\overline{\lambda}')\preceq_{a,b} (\overline{\mu},
\overline{\lambda}).\]
\end{prop}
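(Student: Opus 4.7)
The plan is to deduce Proposition~\ref{cor11} directly from Lemma~\ref{lem11} together with the compatibility statements in Lemma~\ref{lem11a} and Remark~\ref{rem11}. The key point is that all three ingredients are already in place: passing to symbols translates the pre-order $\preceq_{a,b}$ into the dominance order on multisets (Definition~\ref{def1}), the conjugation on multisets corresponds to swapping and conjugating the partitions (Lemma~\ref{lem11a}), and the dominance order is reversed by conjugation (Lemma~\ref{lem11}).

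Concretely, I would first fix an integer $N \geq 0$ large enough so that both $Z := Z_{a,b}^N(\lambda,\mu)$ and $Z' := Z_{a,b}^N(\lambda',\mu')$ are defined and lie in $\cM_{a,b;n}^N$. By Definition~\ref{def1}, the hypothesis $(\lambda,\mu)\preceq_{a,b}(\lambda',\mu')$ is then equivalent to $Z\trianglelefteq Z'$. Next I would pick an integer $t$ large enough so that the conjugates $\overline{Z}$ and $\overline{Z}'$ (in the sense of Lusztig, as recalled just before Lemma~\ref{lem11a}) are simultaneously defined; set $M := t+1-N-r$, so $\overline{Z},\overline{Z}'\in\cM_{a,b;n}^M$. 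Lemma~\ref{lem11} then gives $\overline{Z}'\trianglelefteq \overline{Z}$.

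The second step is to translate this inequality back into a statement about bipartitions. By Lemma~\ref{lem11a} we have
\[
\overline{Z}\,\sim\,Z_{a,b}^M(\overline{\mu},\overline{\lambda})
\qquad\text{and}\qquad
\overline{Z}'\,\sim\,Z_{a,b}^M(\overline{\mu}',\overline{\lambda}').
\]
By the shift-invariance of $\trianglelefteq$ noted in Remark~\ref{rem11}, the relation $\overline{Z}'\trianglelefteq \overline{Z}$ transfers to $Z_{a,b}^M(\overline{\mu}',\overline{\lambda}')\trianglelefteq Z_{a,b}^M(\overline{\mu},\overline{\lambda})$, which by Definition~\ref{def1} is precisely $(\overline{\mu}',\overline{\lambda}')\preceq_{a,b}(\overline{\mu},\overline{\lambda})$. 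This proves the implication ``$\Rightarrow$''.

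For the converse, I would apply the same argument with $(\overline{\mu}',\overline{\lambda}')$ and $(\overline{\mu},\overline{\lambda})$ in place of $(\lambda,\mu)$ and $(\lambda',\mu')$; since $\overline{\overline{Z}}\sim Z$ and conjugation of partitions is an involution, this recovers the original inequality. The whole argument is thus essentially a bookkeeping exercise once Lemma~\ref{lem11} is in hand — and indeed the real obstacle is Lemma~\ref{lem11}, whose proof (to be given in Section~\ref{secproof1}, separating the cases $b'=0$ and $b'>0$) contains all the combinatorial substance.
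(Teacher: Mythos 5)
Your proof is correct and follows the paper's own argument exactly: it is a spelled-out version of the one-line deduction from Definition~\ref{def1}, Lemma~\ref{lem11a}, and Lemma~\ref{lem11} (with shift-invariance from Remark~\ref{rem11} handling the passage between $\sim$-equivalent multisets). You also rightly identify Lemma~\ref{lem11} as the locus of all the combinatorial substance.
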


\begin{proof} This is clear by the definition of $\preceq_{a,b}$ and
Lemmas~\ref{lem11a} and \ref{lem11}. \end{proof}

%%%%%%%%%%%%%%%%%%%%%%%%%%%%%%%%%%%%%%%%%%%%%%%%%%%%%%%%%%%%%%%%%%%%%%%%%%%
\section{Proof of Lemma~\ref{lem11}: the case where $b'=0$} 
\label{secproof1}

Throughout this section we assume that $b'=0$. By Remark~\ref{expsing},
we have
\[(\lambda,\mu) \preceq_{a,b} (\lambda',\mu')\quad 
\Leftrightarrow \quad (\lambda,\mu) \preceq_{1,r} (\lambda',\mu')\]
for all bipartitions $(\lambda,\mu)$ and $(\lambda',\mu')$ of $n$.
Thus, in order to prove Lemma~\ref{lem11}, we
can assume without loss of generality that $a=1$ and $b=r
\geq 0$. Then $\cM_{1,r;n}^N$ consists of multisets whose entries are
non-negative integers such that the conditions (M1) and (M2) in
Section~\ref{sec1} hold.

It will now be convenient to work with
a slightly larger class of multisets. We define $\tilde{\cM}_{1,r;n}^N$
to be the set of all multisets whose entries are non-negative integers
satisfying the condition (M1) and such that, instead of (M2), the
weaker condition that no entry is repeated more than twice holds (but
we do not make an assumption on the number of distinct entries). Thus,
$\cM_{a,b;n}^N \subseteq \widetilde{\cM}_{a,b;n}^N$. We shall prove the 
statement in Lemma~\ref{lem11} for this larger class of multisets. 
The advantage of using $\widetilde{\cM}_{a,b;n}^N$ instead of $\cM_{a,b;n}^N$ 
is that we have the following simple description of adjacent multisets in 
$\widetilde{\cM}_{a,b;n}^N$ with respect to $\trianglelefteq$.

\begin{lem} \label{adja1} Let $Z,Z' \in \widetilde{\cM}_{a,b;n}^N$ be 
such that $Z \trianglelefteq Z'$ and $Z\neq Z'$. Assume that $Z,Z'$ are
adjacent with respect to $\trianglelefteq$ (that is, if $Z\trianglelefteq
Z''\trianglelefteq Z'$ for some $Z''\in\widetilde{\cM}_{a,b;n}^N$, then $Z=
Z''$ or $Z''=Z'$). Write $Z=\{z_1\geq z_2 \geq \ldots \geq z_{2N+r}\}$ 
and $Z'=\{z_1'\geq z_2'\geq \ldots \geq z_{2N+r}'\}$. Then there exist 
indices $1\leq k<l\leq 2N+r$ such that $z_k'=z_k+1$, $z_l'=z_l-1$ and 
$z_i'=z_i$ for all $i\neq k,l$.
\end{lem}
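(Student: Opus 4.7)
The plan is to prove the contrapositive: whenever $Z \triangleleft Z'$ in $\widetilde{\cM}_{a,b;n}^N$ and $Z'$ is \emph{not} already obtained from $Z$ by a single raising operation of the stated form, I would exhibit some $\widetilde Z \in \widetilde{\cM}_{a,b;n}^N$ with $Z \triangleleft \widetilde Z \triangleleft Z'$, contradicting adjacency.

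For the construction, I would first set $k := \min\{i : z_i \neq z_i'\}$. The dominance inequality applied at indices $k-1$ and $k$ together with minimality forces $z_i = z_i'$ for $i<k$ and $z_k < z_k'$. For the companion index, I would let $l_0$ be the smallest index $>k$ with $z_{l_0} > z_{l_0}'$ (existence follows from $\sum_i z_i = \sum_i z_i'$), then extend $l_0$ to the last index $l$ of the constancy block $z_{l_0} = z_{l_0+1} = \ldots = z_l$ in $Z$. By the weak decrease of $Z'$ one still has $z_l > z_l'$, and $z_l > z_{l+1}$ (or $l = 2N+r$). Then I define $\widetilde Z$ by raising $z_k$ to $z_k + 1$, lowering $z_l$ to $z_l - 1$, and leaving all other entries unchanged.

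Next I would verify that $\widetilde Z \in \widetilde{\cM}_{a,b;n}^N$. The sum condition (M1) is preserved; weak decrease at positions $k-1$ and $l+1$ follows from $z_{k-1} = z_{k-1}' \geq z_k' \geq z_k+1$ and $z_l > z_{l+1}$ respectively. The delicate check is the multiplicity-at-most-two bound: for the value $z_k + 1$, if it already appeared twice in $Z$, necessarily at positions $k-2, k-1$, the chain $z_{k-2}' = z_{k-2} = z_k + 1$, $z_{k-1}' = z_{k-1} = z_k + 1$, combined with $z_k + 1 \leq z_k' \leq z_{k-1}' = z_k + 1$, forces three copies of $z_k+1$ in $Z'$, contradicting $Z' \in \widetilde{\cM}_{a,b;n}^N$; a symmetric argument at the tail end, using that $l$ is the last index of its block, handles $z_l - 1$. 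Then $Z \triangleleft \widetilde Z$ is immediate from $\sum_{i\leq d}\widetilde z_i = \sum_{i\leq d}z_i + [k \leq d < l]$, while $\widetilde Z \trianglelefteq Z'$ reduces to the assertion $\delta(d) := \sum_{i\leq d}(z_i' - z_i) \geq 1$ on $[k, l-1]$; this holds at $d = k$ and is preserved on $[k, l_0 - 1]$ by the minimality of $l_0$.

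The main obstacle is the tension between the two competing constraints on $l$: the multiplicity bound pushes toward taking $l$ as large as possible, while the requirement $\delta(d) \geq 1$ pushes toward taking $l$ as small as possible. I would resolve this by case analysis — when the above construction fails on one of the two counts, one can locate an index $d^*$ with $\delta(d^*) = 0$ strictly between $k$ and $l$, and the dominance $Z \triangleleft Z'$ then decomposes into two dominance relations of strictly smaller width on the prefix $[1, d^*]$ and the suffix $(d^*, 2N+r]$; recursing into the piece on which $Z$ and $Z'$ still differ produces a raising-op intermediate on that side alone, which again gives the desired $\widetilde Z$.
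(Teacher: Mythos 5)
Your plan---find the first index $k$ where $Z$ and $Z'$ disagree and a companion index $l$, and produce an intermediate $\widetilde Z$ by raising $z_k$ and lowering $z_l$---is the same as the paper's, and your choice of $k$ coincides with the paper's. The difference is in the choice of $l$: you take $l$ at the end of the constancy block of $Z$ beginning at the first index $l_0>k$ with $z_{l_0}>z_{l_0}'$, whereas the paper takes $l$ to be the \emph{smallest index $>k$ at which the partial sums of $Z$ and $Z'$ agree}. These give different candidates, and yours can fail.

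The gap is in your ``no triple'' check for the value $z_l-1$. You claim a ``symmetric argument at the tail end'' handles it, but the two sides are not symmetric. At $k$ the argument uses the exact equalities $z_i'=z_i$ for $i<k$; that is what rules out a triple of $z_k+1$. At $l$ the analogous inputs are the inequalities $z_{l+1}\leq z_{l+1}'$ and $z_{l+2}\leq z_{l+2}'$, which the paper deduces from the partial-sum equality $\sum_{i\leq l}z_i=\sum_{i\leq l}z_i'$ together with dominance at $l+1$ and $l+2$. Your $l$ does not guarantee that partial-sum equality, so these inequalities are unavailable and the check genuinely breaks. Concretely, take $Z=(4,3,2,2,0)$ and $Z'=(6,2,2,1,0)$ in $\widetilde{\cM}_{1,1;7}^2$ (both have sum $11$ and no entry repeated more than twice, and $Z\trianglelefteq Z'$). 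Here $k=1$, $l_0=2$, and your $l=2$, giving $\widetilde Z=(5,2,2,2,0)$, which has a triple---while $Z'$ has none, so no contradiction with ``$Z'$ avoids triples'' is obtainable. Your fallback also does not engage: there is no index $d^*$ with $\sum_{i\leq d^*}(z_i'-z_i)=0$ strictly between $k=1$ and $l=2$, since that range is empty. The paper's choice here is $l=4$, which yields $Z''=(5,3,2,1,0)\in\widetilde{\cM}_{1,1;7}^2$ with $Z\trianglelefteq Z''\trianglelefteq Z'$. To repair the argument you really do need to select $l$ by the partial-sum criterion, which is exactly what makes the two danger cases in the multiplicity check refutable.
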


\begin{proof} This is similar to the proof of the analogous result
for general partitions in \cite[I.1.16]{Mac}.

Let $k \geq 1$ be such that 
$z_i=z_i'$ for $1 \leq i \leq k-1$ and $z_k<z_k'$. Note that, if
$k \geq 2$, then this implies $z_k<z_k'\leq z_{k-1}'=z_{k-1}$. 
Furthermore, we have $z_1+\ldots +z_k < z_1' + \ldots +z_k'$. Let
$l>k$ be minimal such that 
\[ z_1+\ldots +z_l=z_1'+\ldots + z_l'.\]
Then we have $z_1+\ldots +z_{l-1}<z_1'+\ldots + z_{l-1}'$ and so
$z_l >z_l'$. Since $z_1+\ldots +z_{l+1}\leq z_1'+\ldots + z_{l+1}'$,
we also have $z_{l+1}\leq z_{l+1}'$ and so
\[ z_l>z_l'\geq z_{l+1}'\geq z_{l+1}.\]
We define a multiset $Z''=\{z_1'',\ldots,z_{2N+r}''\}$ by $z_k'':=
z_k+1$, $z_l'':=z_l-1$ and $z_i'':=z_i$ for all $i \neq k,l$. Note 
that $z_1''\geq z_2''\geq \ldots \geq z_{2N+r}''$. We claim that 
$Z'' \in \widetilde{\cM}_{a,b;n}^N$. 

All that needs to
be checked is that no entry of $Z''$ is repeated more than twice.
Note that it could happen that $k \geq 3$ and $z_{k-2}=z_{k-1}=z_k+1$,
in which case the entry $z_{k-1}$ would be repeated three times in
$Z''$. Similarly, it could happen that $l+2\leq 2N+r$ and $z_l-1=
z_{l+1}=z_{l+2}$, in which case the entry $z_{l+1}$ would be repeated 
three times in $Z''$. Hence, we must show that these two situations 
cannot occur.

First assume, if possible, that $k \geq 3$ and $z_{k-2}=z_{k-1}=z_k+1$.
Now, we have $z_{k-2}'=z_{k-2}=z_{k-1}=z_{k-1}'$. Since no entry
of $Z'$ is repeated more than twice, this implies $z_k'<z_{k-1}'$.
But, we have $z_k<z_k'$ and so $z_k+1<z_k'+1\leq z_{k-1}'=z_{k-1}$, 
contradicting our assumption.
Next assume, if possible, that $l+2\leq 2N+r$ and $z_l-1=z_{l+1}=
z_{l+2}$. Since $z_l>z_l'\geq z_{l+1}'\geq z_{l+1}$, this would imply 
$z_l'= z_{l+1}'=z_{l+1}$. Since $z_1+\ldots + z_{l+2}\leq z_1'+\ldots 
+ z_{l+2}'$, we have $z_{l+2}\leq z_{l+2}'$. This yields $z_{l+1}'=
z_{l+1}=z_{l+2} \leq z_{l+2}'$ and so $z_l'=z_{l+1}'= z_{l+2}'$, 
contradicting the fact that no entry of $Z'$ is repeated more than twice. 
This shows that $Z'' \in \widetilde{\cM}_{a,b;n}^N$. We clearly
have $Z \trianglelefteq Z'' \trianglelefteq Z'$.  Since $Z,Z'$ are
assumed to be adjacent, we conclude that $Z'=Z''$, as required.

\end{proof}

\begin{rem} \label{remadja} With somewhat more effort, one can
show that the statement of Lemma~\ref{adja1} holds for adjacent 
multisets in $\cM_{1,r;n}^N$, but we shall not need this here.
\end{rem}

We can now complete the proof of Lemma~\ref{lem11} in this case, 
as follows. As already mentioned, we shall prove the desired implication
for the larger class of multisets $\widetilde{\cM}_{a,b;n}^N$. So let $Z,Z'
\in \widetilde{\cM}_{a,b;n}^N$ be such that $Z \trianglelefteq Z'$. Let $t$ 
be a sufficiently large integer such that both $\overline{Z}$ and 
$\overline{Z}'$ are defined and in $\widetilde{\cM}_{a,b;n}^{t+1-N-r}$; see 
Section~\ref{sec1}. Note that this definition does indeed work for 
multisets in $\widetilde{\cM}_{a,b; n}^N$.) Thus, $\overline{Z}$ and 
$\overline{Z}'$ are obtained by taking the complements of the multisets 
\[ \{t-z \mid z \in Z\} \qquad \mbox{and} \qquad 
\{t-z' \mid z' \in Z'\}\] 
in the multiset $\{0,0,1,1,2,2,\ldots,t,t\}$. We must show
that $\overline{Z}'\trianglelefteq \overline{Z}$. For this purpose,
we can assume without loss of generality that $Z,Z'$ are adjacent 
with respect to $\trianglelefteq$. Then Lemma~\ref{adja1} shows
that $Z$ is obtained from $Z'$ by increasing a suitable entry by $1$ 
and decreasing another suitable entry by $1$. A similar statement
then also holds for $\overline{Z}$ and $\overline{Z}'$ and one
immediately checks that $\overline{Z}' \trianglelefteq \overline{Z}$. 
Thus, Lemma~\ref{lem11} holds in the case where $b'=0$.

%%%%%%%%%%%%%%%%%%%%%%%%%%%%%%%%%%%%%%%%%%%%%%%%%%%%%%%%%%%%%%%%%%%%%%%%%%%%%%%%
\section{Proof of Lemma~\ref{lem11}: The case where $b'>0$}
\label{secproof2}

Throughout this section we assume that $b'>0$. An essential feature
of this case is that then all entries in a multiset in
$\cM_{a,b;n}^N$ are distinct; that is, we are dealing with actual
sets (finite subsets of $\Z_{\geq 0}$) and not just with multisets. 
Now it might be possible to use a
similar argument as in the previous section, but the following example
illustrates that adjacent pairs with respect to $\trianglelefteq$
certainly are more difficult to describe in this case.

\begin{exmp} \label{badexp}
Let $n=N=2$, $a=2$ and $b=3$. We have:
\[\begin{array}{cc} \hline (\lambda,\mu) & Z_{2,3;2}^2(\lambda,\mu)\\
\hline
(\varnothing,11)    & \{ 5, 4, 3, 2, 1 \}\\
(\varnothing,2)     & \{ 6, 5, 3, 1, 0 \}\\ 
(1,1)               & \{ 7, 4, 3, 1, 0 \}\\
(11,\varnothing)    & \{ 7, 5, 2, 1, 0 \}\\
(2,\varnothing)     & \{ 9, 3, 2, 1, 0 \}\\
\hline \end{array}\]
In this table, bipartitions in consecutive rows are adjacent with
respect to $\trianglelefteq$ (and these are all the adjacent pairs).
Thus, we see that adjacent sets $Z,Z' \in \cM_{2,3;2}^2$ can differ
in more than $2$ entries. (This is not an isolated example.)
\end{exmp}

Because of this difficulty, we follow an alternative route for proving
Lemma~\ref{lem11}.

It will be useful to introduce the following notation concerning
finite subsets of $\Z_{\geq 0}$. If $X$ is such a subset, we denote
by $\# X$ the number of elements of $X$ and by $X^+$ the sum of the
entries of $X$. Let $M \geq 0$ be an integer such that $x\leq \# X+
M-1$ for all $x \in X$. Then we define
\[ \widehat{X}_M:=\{0,1,2,\ldots, \# X+M-1\}\setminus \{\# X+M-1-x 
\mid x \in X\}.\]
Finally, given two finite subsets $X,Y \subseteq \Z_{\geq 0}$ such that
$\# X=\# Y$ and $X^+=Y^+$, it makes sense to define $X \trianglelefteq Y$.
(Just arrange the entries of $X$ and $Y$ in decreasing order and argue as
in Remark~\ref{rem11}.)

It is well-known that these definitions can be interpreted in terms
of partitions. Let us briefly recall how this works. Let $\lambda$
be a partition of some integer $k \geq 0$. Let $m \geq 0$ be a (large)
integer and write
\[ \lambda=(\lambda_1\geq \lambda_2 \geq \ldots \geq \lambda_m \geq 0).\]
The corresponding $\beta$-set is defined as
\[ B_m(\lambda):=\{\lambda_i+m-i \mid 1 \leq i \leq m\}.\]
Thus, $B_m(\lambda)$ is a finite subset of $\Z_{\geq 0}$ such that
$\# B_m(\lambda)=m$ and $B_m(\lambda)^+=k+\binom{m}{2}$. Conversely,
given a finite subset $X \subseteq \Z_{\geq 0}$, with $m$ elements and
such that $X^+=k+\binom{m}{2}$ where $k \geq 0$, there exists a unique
partition $\lambda$ of $k$ such that $X=B_m(\lambda)$. Thus, every finite
subset $X \subseteq \Z_{\geq 0}$ can be regarded as the $\beta$-set of a
partition of a suitable integer $k \geq 0$. One immediately checks that
\[ \lambda \trianglelefteq \mu \qquad \Leftrightarrow \qquad 
B_m(\lambda) \trianglelefteq B_m(\mu)\]
for all partitions $\lambda,\mu$ of $k$. Furthermore, given a finite
subset $X \subseteq \Z_{\geq 0}$ and an integer $M \geq 0$ as above,
the set $\widehat{X}_M$ has the following interpretation. Write $X=B_m
(\lambda)$ where $m=\# X$ and $\lambda$ is a partition. Then, by
\cite[I.1.7]{Mac}, we have
\[ \widehat{X}_M=\{0,1,\ldots,m+M-1\} \setminus \{m+M-1-x 
\mid x \in X\}=B_M(\overline{\lambda}),\]
where $\overline{\lambda}$ is the conjugate partition.

\begin{lem} \label{strange1} Let $X,Y \subseteq \Z_{\geq 0}$ be
two finite subsets such that $\# X=\# Y$ and $X^+=Y^+$. Let $M \geq 0$
be an integer such that $x\leq \# X+M-1$ for all $x \in X$ and
$y \leq \#Y +M-1$ for all $y \in Y$.  Then we have
\[ X \trianglelefteq Y \qquad \Rightarrow \qquad \widehat{Y}_M
\trianglelefteq \widehat{X}_M.\]
\end{lem}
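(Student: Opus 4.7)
The plan is to reduce the statement entirely to the classical fact that conjugation reverses the dominance order on partitions, using the $\beta$-set dictionary that the paper has just laid out.

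First I would set $m := \# X = \# Y$ and observe that the common value $k := X^+ - \binom{m}{2} = Y^+ - \binom{m}{2}$ is a non-negative integer. By the discussion preceding the lemma, there are unique partitions $\lambda,\mu$ of $k$ with
\[ X = B_m(\lambda) \qquad \text{and} \qquad Y = B_m(\mu), \]
and one has the equivalence $X \trianglelefteq Y \Leftrightarrow \lambda \trianglelefteq \mu$.

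Next I would check that the hypothesis on $M$ ensures that $\widehat{X}_M$ and $\widehat{Y}_M$ are $\beta$-sets of the conjugate partitions with exactly $M$ entries. Indeed, the condition $x \leq m+M-1$ for all $x \in X$ translates, via $x = \lambda_i + m - i$, into $\lambda_1 \leq M$, i.e.\ the conjugate partition $\overline{\lambda}$ has at most $M$ parts; similarly for $\mu$. Therefore the cited identity from \cite[I.1.7]{Mac} gives
\[ \widehat{X}_M = B_M(\overline{\lambda}) \qquad \text{and} \qquad \widehat{Y}_M = B_M(\overline{\mu}). \]

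Finally, assuming $X \trianglelefteq Y$, we have $\lambda \trianglelefteq \mu$, hence by the classical conjugation property of the dominance order (\cite[I.1.11]{Mac}) we get $\overline{\mu} \trianglelefteq \overline{\lambda}$. Translating back through the equivalence between dominance of partitions and dominance of their $\beta$-sets, this gives
\[ \widehat{Y}_M = B_M(\overline{\mu}) \trianglelefteq B_M(\overline{\lambda}) = \widehat{X}_M, \]
as required. There is no real obstacle here: the work has already been done by Macdonald, and the only thing to verify carefully is that the size hypothesis on $M$ is precisely what is needed to invoke the $\beta$-set description of conjugation; everything else is a routine translation.
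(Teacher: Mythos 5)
Your proof is correct and follows exactly the same route as the paper's own argument: translate $X,Y$ to $\beta$-sets $B_m(\lambda)$, $B_m(\mu)$, invoke Macdonald's I.1.11 to reverse the dominance order under conjugation, and translate back via $\widehat{X}_M = B_M(\overline{\lambda})$, $\widehat{Y}_M = B_M(\overline{\mu})$. The only difference is that you spell out explicitly why the size hypothesis on $M$ guarantees $\overline{\lambda}$ has at most $M$ parts, a point the paper leaves implicit.
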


\begin{proof} Let us write $X=B_m(\lambda)$ and $Y=B_m(\mu)$ where
$m \geq 0$ and $\lambda, \mu$ are partitions of the same integer.
Since $X\trianglelefteq Y$, we have $\lambda \trianglelefteq \mu$. By
\cite[I.1.11]{Mac}, we then also have $\overline{\mu} \trianglelefteq 
\overline{\lambda}$ (where $\overline{\lambda}$ and $\overline{\mu}$
are the conjugate partitions). Furthermore, as explained above, we have
$B_M(\overline{\lambda})=\widehat{X}_M$ and $B_M(\overline{\mu})=
\widehat{Y}_M$. Since $\overline{\mu} \trianglelefteq 
\overline{\lambda}$, we then also have $\widehat{Y}_M 
\trianglelefteq \widehat{X}_M$.
\end{proof}

If $a=2$ and $b'=1$, then the above result immediately implies that
Lemma~\ref{lem11} holds. (Just note that $\overline{Z}$, as defined in
Section~\ref{sec1}, is nothing but $\widehat{Z}_M$ in this case, for
suitable values of $t,M$.) In order to deal with the general case, we
need the following result.

\begin{lem} \label{strange0} Let $\lambda$ and $\mu$ be partitions of
$k$. Let $l \geq 0$ be an integer. Denote by $\lambda'$ the partition
obtained by adding $l$ as a new part to $\lambda$. Similarly, let
$\mu'$ be the partition obtained by adding $l$ as a new part to $\mu$.
(Thus, $\lambda'$ and $\mu'$ are partitions of $k+l$.)
Then we have $\lambda\trianglelefteq \mu$ if and only if
$\lambda' \trianglelefteq \mu'$.
\end{lem}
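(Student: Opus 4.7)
The plan is to reduce the problem to the conjugate partitions $\overline{\lambda}, \overline{\mu}, \overline{\lambda'}, \overline{\mu'}$, using the standard fact from \cite[I.1.11]{Mac} that $\lambda \trianglelefteq \mu$ if and only if $\overline{\mu}\trianglelefteq \overline{\lambda}$ (and likewise for $\lambda',\mu'$). Thus it suffices to prove that $\overline{\mu} \trianglelefteq \overline{\lambda}$ if and only if $\overline{\mu'}\trianglelefteq\overline{\lambda'}$.

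The key observation is that, at the level of Young diagrams, attaching a new row of length $l$ to $\lambda$ amounts to adding one box to each of the first $l$ columns. Passing to conjugates, this means
\[ \overline{\lambda'}_i=\overline{\lambda}_i+1 \text{ for } 1\leq i\leq l,\qquad \overline{\lambda'}_i=\overline{\lambda}_i \text{ for } i>l, \]
(where missing parts are interpreted as zeros), and the analogous description holds for $\overline{\mu'}$ in terms of $\overline{\mu}$.

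Writing $S_d(\nu):=\nu_1+\cdots+\nu_d$ for the partial sums, a direct computation then yields
\[ S_d(\overline{\lambda'})=S_d(\overline{\lambda})+\min(d,l) \qquad \text{and} \qquad S_d(\overline{\mu'})=S_d(\overline{\mu})+\min(d,l) \]
for every $d\geq 1$. Hence the correction term is identical on both sides, so $S_d(\overline{\mu'})\leq S_d(\overline{\lambda'})$ if and only if $S_d(\overline{\mu})\leq S_d(\overline{\lambda})$ for every $d$. Combined with the conjugation step this gives the lemma.

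I do not anticipate any real obstacle here: the argument is a short bookkeeping verification once the problem has been routed through the conjugates. A direct attack on the partial sums of $\lambda,\mu,\lambda',\mu'$ is also possible, but it is mildly awkward because $l$ may be inserted at different positions $i$ in $\lambda$ and $j$ in $\mu$, forcing a case split according to the relative position of $d$ with respect to $i$ and $j$; the conjugate viewpoint eliminates this nuisance because on the $\overline{\lambda},\overline{\mu}$ side the two modifications happen in exactly the same positions $1,\ldots,l$.
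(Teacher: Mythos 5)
Your proof is correct and takes essentially the same route as the paper's: pass to conjugates via \cite[I.1.11]{Mac}, note that appending a part $l$ to a partition corresponds to incrementing the first $l$ parts of its conjugate by $1$, and observe that this uniform shift leaves the dominance comparison unchanged. The only cosmetic difference is that you spell out the partial-sum computation $S_d(\overline{\lambda'})=S_d(\overline{\lambda})+\min(d,l)$, which the paper leaves as an ``immediate check.''
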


\begin{proof} We note that the operation of adding $l$ to $\lambda$ is
equivalent to increasing the largest $l$ entries of the conjugate
partition $\overline{\lambda}$ by~$1$; a similar statement also holds
for $\mu$. Now assume that $\lambda\trianglelefteq \mu$. Then we have
$\overline{\mu} \trianglelefteq \overline{\lambda}$ by
\cite[I.1.11]{Mac}. One immediately checks that increasing the largest
$l$ entries of $\overline{\lambda}$ and $\overline{\mu}$ does not effect
the dominance order relation; that is, we also have $\overline{\mu}'
\trianglelefteq \overline{\lambda}'$. But then \cite[I.1.11]{Mac}
implies again that $\lambda'\trianglelefteq \mu'$, as desired. The
argument for the reverse implication is analogous.
\end{proof}

\begin{cor} \label{strange2} Let $X,Y,U \subseteq \Z_{\geq 0}$ be
finite subsets such that $\# X=\# Y$, $X^+=Y^+$ and $U \cap X=U\cap Y=
\varnothing$. Then we have
\[ X \trianglelefteq Y \qquad \Rightarrow \qquad U \cup X 
\trianglelefteq U \cup Y.\]
\end{cor}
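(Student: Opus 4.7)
The plan is simply to iterate Lemma~\ref{strange0}. Sorting the entries of $X$ (resp.\ $Y$) in decreasing order, I view $X$ and $Y$ as partitions of the common integer $X^+ = Y^+$; the assumption $X \trianglelefteq Y$ then becomes the usual dominance relation on these partitions.

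Enumerate $U = \{u_1, \ldots, u_t\}$ and set
\[ X_k := X \cup \{u_1, \ldots, u_k\}, \qquad Y_k := Y \cup \{u_1, \ldots, u_k\} \qquad (0 \leq k \leq t). \]
The hypothesis $U \cap X = U \cap Y = \varnothing$, together with the distinctness of the $u_i$, guarantees that $X_k$ (resp.\ $Y_k$) has exactly $\#X + k$ (resp.\ $\#Y + k$) elements, so the pair $(X_k, Y_k)$ again consists of (strict) partitions of the same integer. Moreover, the partition associated with $X_k$ is obtained from that of $X_{k-1}$ simply by adjoining $u_k$ as a new part, and similarly on the $Y$ side.

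Lemma~\ref{strange0}, applied with $l = u_k$, therefore gives
\[ X_{k-1} \trianglelefteq Y_{k-1} \;\Longleftrightarrow\; X_k \trianglelefteq Y_k. \]
Starting from $X_0 = X \trianglelefteq Y = Y_0$ and iterating $t$ times yields $X \cup U = X_t \trianglelefteq Y_t = Y \cup U$, as required.

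There is essentially no obstacle here; the only point to verify is the identification of ``adjoining a single element to a set'' with ``adding one new part'' in the sense of Lemma~\ref{strange0}, and the disjointness assumption in the corollary is precisely what makes this interpretation valid at every step.
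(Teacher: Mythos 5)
Your proof is correct and is exactly the intended argument: the paper's own proof consists of the single sentence ``The assertion follows by applying Lemma~\ref{strange0} repeatedly,'' and your write-up supplies precisely the induction on $\#U$, with the disjointness hypothesis guaranteeing that each $X_k,Y_k$ remains a set with $\#X_k=\#Y_k$ and $X_k^+=Y_k^+$ so that Lemma~\ref{strange0} applies at every step.
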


\begin{proof} 
The assertion follows by applying  Lemma~\ref{strange0} repeatedly.
\end{proof}

We can now complete the proof of Lemma~\ref{lem11}, as follows. Let
$Z,Z'\in \cM_{a,b;n}^N$ be such that $Z \trianglelefteq Z'$. Let $t$
be a sufficiently large integer such that both $\overline{Z}$ and
$\overline{Z}'$ are defined and in $\cM_{a,b;n}^{t+1-N-r}$; see
Section~\ref{sec1}. Thus, $\overline{Z}$ and $\overline{Z}'$ are obtained
by taking the complements of
\[ \{ta+b' -z \mid z \in Z\} \qquad \mbox{and} \qquad 
\{ta+b'-z' \mid z' \in Z'\}\]
in the set $Z_{a,b'}^t=\{0,a,2a,\ldots,ta,b',a+b',2a+b',\ldots,ta+b'\}$.
Let $U$ be the set of all $i \in \{0,1,2,3,\ldots,ta+b'\}$ such that
$i \not\equiv 0 \bmod a$ and $i \not\equiv b' \bmod a$. Then
\[ \{0,1,2,3,\ldots,ta+b'\}=U \cup Z_{a,b'}^t\qquad \mbox{(disjoint
union)}.\]
(In the special case where $a=2$ and $b'=1$, we have $U=\varnothing$.)
Clearly, we also have
\begin{align*}
\overline{Z}&=\{0,1,2,3,\ldots, ta+b'\}\setminus \bigl(U \cup
\{ta+b'-z \mid z \in Z\}\bigr),\\
\overline{Z}'&=\{0,1,2,3,\ldots, ta+b'\}\setminus \bigl(U \cup
\{ta+b'-z' \mid z \in Z'\}\bigr).
\end{align*}
Now note that $U=\{ta+b'-u \mid u \in U\}$. This yields
\begin{alignat*}{2}
\overline{Z}&=\{0,1,2,3,\ldots, ta+b'\}\setminus \{ta+b'-x \mid x \in U 
\cup Z\}&&=\widehat{(U\cup Z)}_M,\\
\overline{Z}'&=\{0,1,2,3,\ldots, ta+b'\}\setminus \{ta+b'-x' \mid x' \in 
U\cup Z'\}&&=\widehat{(U\cup Z')}_M,
\end{alignat*}
where $M=ta+b'-\# (U \cup Z)+1$.
Now, since $Z \trianglelefteq Z'$, we also have $U \cup Z 
\trianglelefteq U \cup Z'$; see Corollary~\ref{strange2}. Then
Lemma~\ref{strange1} yields that
\[ \overline{Z}'=\widehat{(U \cup Z')}_M \trianglelefteq 
\widehat{(U \cup Z)}_M=\overline{Z},\]
as desired. This completes the proof of Lemma~\ref{lem11}.

\begin{exmp} \label{expstrange} Let $n=3$, $a=3$ and $b=1$. Then
$r=0$, $b'=1$. Consider the bipartitions $(\lambda,\mu)=(11,1)$ and
$(\lambda',\mu')=(21,\varnothing)$. Choosing $N=2$, we have
\[ Z:=Z_{3,1;3}^2(\lambda,\mu)=\{7,6,4,0\} \quad \mbox{and} \quad
Z':=Z_{3,1;3}^2(\lambda',\mu')=\{10,4,3,0\}.\]
We see that $Z \trianglelefteq Z'$, that is, $(11,1) \preceq_{3,1}
(21,\varnothing)$. Choosing $t=3$, we have $ta+b'=10$. Considering the
appropriate complements in $\{0,3,6,9,1,4,7,10\}$ we obtain
\[ \overline{Z}=\{9,7,1,0\} \quad \mbox{and}
\quad \overline{Z}'=\{9,4,3,1\}.\]
Thus, we have $\overline{Z}' \trianglelefteq \overline{Z}$, as required. Now
let us see how this fits with the above argument. We have $U=\{8,5,2\}=
\{10-8,10-5,10-2\}$. This yields
\[  U\cup Z=\{8,7,6,5,4,2,0\} \quad \mbox{and} \quad
 U\cup Z'=\{10,8,5,4,3,2,0\}.\]
Now $\# (U \cup Z)=\# (U\cup Z')=7$ and $M=ta+b'-\# (U \cup Z) +1=4$.
This yields the required equalities $\overline{Z}=\widehat{(U\cup Z)}_4$ and
$\overline{Z}'=\widehat{(U\cup Z')}_4$.
\end{exmp}

%%%%%%%%%%%%%%%%%%%%%%%%%%%%%%%%%%%%%%%%%%%%%%%%%%%%%%%%%%%%%%%%%%%%%%%%%%%
\section{Example: the asymptotic case} \label{secasym}

Throughout this section, we shall assume that  $a,b \in \Z$ are such
that
\begin{center}
\fbox{$\; b >(n-1)a>0.\;$}
\end{center}
This is the ``asymptotic case'' considered by Bonnaf\'e--Iancu \cite{BI2}, 
\cite{BI}. We shall see that, in this case, the pre-order relation 
$\preceq_{a,b}$ admits a more direct and familiar description, without 
reference to the parameters $a$ and $b$. The proof of this description 
would simplify drastically if $b$ were assumed to be very large with
respect to~$a$ (e.g., $b>2n a$). Assuming only that $b>(n-1)a>0$ requires 
a careful analysis of the arrangement of the entries of $Z_{a,b}^N
(\lambda,\mu)$.

Let $r$, $b'$ be as in Section~\ref{sec1}. Then $r \geq n-1$ and 
$0 \leq b'<a$; furthermore, we must have $b'>0$ if $r=n-1$. Given a 
partition $\nu$ of some integer $m$, we denote by $\nu_1 \geq \nu_2 \geq 
\ldots \geq 0$ the parts of $\nu$ and write $|\nu|=m$. 

\begin{lem} \label{lemasy1} Assume that $\lambda=\varnothing$. If
$\mu_1=n$ and $r=n-1$, then the largest entry of $Z_{a,b}^N(\lambda,\mu)$
is $(N+n-1)a$. Otherwise, the largest entry of $Z_{a,b}^N(\lambda,
\mu)$ is $(N+r-1)a+b'$.  
\end{lem}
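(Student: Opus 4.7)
The plan is to read off the largest entry in each row of $Z_{a,b}^N(\lambda,\mu)$ directly from Definition~\ref{def0} and then compare the two candidate maxima using the hypothesis $b>(n-1)a$. Since $\lambda=\varnothing$, the first-row entries are $(N+r-i)a+b'$ for $1\leq i\leq N+r$, which is strictly decreasing in $i$, so the first-row maximum is $(N+r-1)a+b'$. The second-row entries $(\mu_j+N-j)a$ for $1\leq j\leq N$ are maximised at $j=1$ (using $\mu_j\leq\mu_1$ and $j\geq 1$), giving $(\mu_1+N-1)a$. The problem thus reduces to comparing these two numbers, whose difference is $(r-\mu_1)a+b'=b-\mu_1 a$.

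The easy regime is $\mu_1\leq n-1$: then $\mu_1 a\leq(n-1)a<b$, so the first-row entry is strictly larger and the ``otherwise'' conclusion holds. The delicate case is $\mu_1=n$, which (since $|\mu|=n-|\lambda|=n$) forces $\mu=(n)$; here the comparison becomes $b$ versus $na$. Writing $b=ra+b'$ with $0\leq b'<a$, the asymptotic hypothesis $b>(n-1)a$ forces $r\geq n-1$, and I would then split into two subcases according to whether $r=n-1$ or $r\geq n$.

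In the subcase $r=n-1$, the hypothesis $b>(n-1)a$ extracts $b'>0$, whence $b=(n-1)a+b'<na$; the second-row entry $(N+n-1)a$ therefore wins, matching precisely the exceptional clause of the lemma. In the subcase $r\geq n$, we have $b=ra+b'\geq na$, so the first-row entry $(N+r-1)a+b'$ remains at least as large as $(N+n-1)a$, again consistent with the ``otherwise'' conclusion. I expect no real obstacle; the only point of care is the boundary $r=n-1$, where one must explicitly observe that the strict inequality $b>(n-1)a$ rules out $b'=0$ and hence picks out the single exceptional case in the statement.
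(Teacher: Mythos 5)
Your proof is correct and takes essentially the same approach as the paper: identify the maximal entry in each row of $Z^N_{a,b}(\lambda,\mu)$ and compare them. The paper streamlines the ``otherwise'' branch by simply noting $\mu_1\leq r$, whereas you split further on $\mu_1=n$ versus $\mu_1\leq n-1$ and on $r$; also, in the exceptional case $\mu_1=n$, $r=n-1$, the conclusion $b<na$ (equivalently $a-b'>0$) follows from $b'<a$ alone, so invoking $b'>0$ there, while correct, is slightly more than needed.
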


\begin{proof} The entries of $Z_{a,b}^N(\lambda,\mu)$ which arise from
$\lambda$ are $(N+r-i)a+b'$ for $1\leq i \leq N+r$. The largest of
these entries is $(N+r-1)a+b'$. On the other hand, the largest entry
of $Z_{a,b}^N(\lambda,\mu)$ arising from $\mu$ is $(\mu_1+N-1)a$. This 
immediately yields the assertion for the case where $\mu_1=n$ and 
$r=n-1$. Otherwise, we have $\mu_1\leq r$ and so $N+r-1\geq\mu_1+N-1$.
Hence, in this case, the largest entry of $Z_{a,b}^N(\lambda,\mu)$ is 
$(N+r-1)a+b'$. 
\end{proof}

\begin{rem} \label{lemasy2a} Let $l \geq 1$ be such that 
$\lambda_l+N+r-l \geq \mu_1+N-1$. Then, clearly, the largest $l$ entries 
of $Z_{a,b}^N(\lambda,\mu)$ are $(\lambda_i+N+r-i)a+b'$ for $1 \leq i 
\leq l$. This simple remark will be used frequently in what follows.
\end{rem}

\begin{lem} \label{lemasy2} Assume that $\lambda \neq \varnothing$ and
let $l=|\lambda|\geq 1$; choose $N$ large enough to have $l\leq N+r$. Then
\[\lambda_l+N+r-l\geq \mu_1+N-1\]
and so the largest $l$ entries of $Z_{a,b}^N (\lambda,\mu)$ are
$(\lambda_i+N+r-i)a+b'$ for $1\leq i\leq l$.
\end{lem}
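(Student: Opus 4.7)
The plan is to reduce the lemma to a single numerical inequality, namely $\lambda_l + N + r - l \geq \mu_1 + N - 1$, after which the description of the top $l$ entries follows immediately from Remark~\ref{lemasy2a}.

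To establish the inequality, I would cancel $N$ from both sides and rewrite it as $\lambda_l + r \geq \mu_1 + l - 1$. Since $\lambda_l \geq 0$ and since $(\lambda,\mu)$ is a bipartition of $n$ with $|\lambda| = l$, we have $\mu_1 \leq |\mu| = n - l$; hence the inequality holds as soon as $r \geq n - 1$. But this is precisely what the asymptotic hypothesis provides: writing $b = ra + b'$ with $0 \leq b' < a$, the assumption $b > (n-1)a > 0$ forces $r \geq n-1$, as was already noted at the opening of the section (with $r = n-1$ possible only when $b' > 0$).

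With the inequality in hand, Remark~\ref{lemasy2a} applied to the index $l$ (permissible since $l \leq N+r$) yields that the largest $l$ entries of $Z_{a,b}^N(\lambda,\mu)$ are exactly $(\lambda_i + N + r - i)a + b'$ for $1 \leq i \leq l$. There is essentially no obstacle; the only subtlety is making sure each of the two crude estimates $\lambda_l \geq 0$ and $\mu_1 \leq n - l$ is used correctly, since both can be saturated simultaneously in the extreme case $(\lambda,\mu) = ((1^l),(n-l))$, in which the hypothesis $r \geq n - 1$ is used tightly.
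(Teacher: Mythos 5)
Your proof is correct and essentially identical to the paper's: both arguments chain $\mu_1 \leq n-l$, then $r \geq n-1$, then $\lambda_l \geq 0$, and close with Remark~\ref{lemasy2a}. One minor quibble with your final parenthetical: in the case $(\lambda,\mu)=((1^l),(n-l))$ one has $\lambda_l = 1$, so the estimate $\lambda_l\geq 0$ is not tight there; the chain is actually saturated for $(\lambda,\mu)=((l),(n-l))$ with $l\geq 2$ (where $\lambda_l=0$), but this side remark does not affect the validity of your argument.
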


\begin{proof} Since $\mu_1\leq |\mu|=n-|\lambda|=n-l$ and $r \geq n-1$,
we have $\mu_1+N-1\leq N+n-l-1 \leq N+r-l\leq \lambda_l+N+r-l$, as
required. The statement about the largest $l$
entries of $Z_{a,b}^N (\lambda,\mu)$ follows by Remark~\ref{lemasy2a}.
\end{proof}

\begin{defn} \label{defdouble}
The dominance order between bipartitions $(\lambda,\mu)$ and $(\lambda',
\mu')$ of $n$ is defined by
\[(\lambda,\mu) \trianglelefteq (\lambda',\mu') \quad 
\stackrel{\text{def}}{\Leftrightarrow}\quad \left\{\begin{array}{rcll} 
\displaystyle \sum_{1\leq i\leq d} \lambda_i &\leq &\displaystyle 
\sum_{1\leq i\leq d} \lambda_i' &\; \mbox{for all $d$},\\
\displaystyle |\lambda|+ \sum_{1\leq i\leq d} \mu_i &\leq & 
\displaystyle |\lambda'|+ \sum_{1\leq i\leq d} \mu_i' &\; 
\mbox{for all $d$}, \end{array} \right.\]
where $\lambda=(\lambda_1\geq \lambda_2 \geq \ldots \geq 0)$,
$\lambda'= (\lambda_1' \geq \lambda_2' \geq \ldots \geq \ 0)$,
$\mu=(\mu_1\geq \mu_2 \geq \ldots \geq 0)$ and $\mu'=(\mu_1'\geq 
\mu_2'\geq \ldots \geq 0)$. (This appeared in the work of
Dipper--James--Murphy \cite{DJM3}.)
\end{defn}

\begin{rem} \label{remdouble}  Let us  define the dominance
order also for partitions of possibly different numbers (as, for example,
in the section on ``raising operators'' in \cite[I.1]{Mac}). Then we
have:
\[(\lambda,\mu) \trianglelefteq (\lambda',\mu') \qquad \Leftrightarrow 
\qquad \lambda \trianglelefteq \lambda' \quad \mbox{and} \quad 
\overline{\mu}' \trianglelefteq \overline{\mu}.\]
This easily follows by using a slight variation of the argument in
\cite[I.1.11]{Mac}.
\end{rem}

\begin{rem} \label{remadjdouble} Let $(\lambda, \mu)$ and $(\lambda',\mu')$
be bipartitions of $n$ such that $(\lambda,\mu) \trianglelefteq (\lambda',
\mu')$ and $(\lambda,\mu) \neq (\lambda',\mu')$. Assume that $(\lambda,\mu)$
and $(\lambda',\mu')$ are adjacent with respect to $\trianglelefteq$ (that
is, if $(\lambda,\mu)\trianglelefteq (\lambda'',\mu'') \trianglelefteq 
(\lambda',\mu')$ for some bipartition $(\lambda'',\mu'')$, then
$(\lambda,\mu)=(\lambda'',\mu'')$ or $(\lambda'',\mu'')=(\lambda',\mu')$).
Then we are in one of the following three cases.
\begin{itemize}
\item[(a)] $\mu=\mu'$ and there exist indices $i<j$ such that $\lambda'$
is obtained from $\lambda$ by increasing $\lambda_i$ by $1$ and decreasing
$\lambda_j$ by $1$. (In particular, $\lambda,\lambda'$ are adjacent with
respect to $\trianglelefteq$.)
\item[(b)] $\lambda=\lambda'$ and there exist indices $i<j$ such
that $\mu'$ is obtained from $\mu$ by increasing $\mu_i$ by $1$ and
decreasing $\mu_j$ by $1$. (In particular, $\mu,\mu'$ are adjacent with
respect to $\trianglelefteq$.)
\item[(c)] $|\lambda|<|\lambda'|$ and there exist indices $i,j$ such
that $\lambda'$ is obtained from $\lambda$ by increasing $\lambda_i$
by $1$ and $\mu'$ is obtained from $\mu$ by decreasing $\mu_j$ by $1$.
\end{itemize}
This follows easily by an argument similar to \cite[I.1.16]{Mac}.
\end{rem}

\begin{prop} \label{lemasy3} Let $(\lambda,\mu)$ and $(\lambda',\mu')$
be bipartitions of $n$. The following holds 
\[ (\lambda,\mu) \preceq_{a,b}
(\lambda',\mu')\quad\Rightarrow\quad \lambda\trianglelefteq \lambda'\, .\]
\end{prop}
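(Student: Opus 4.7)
The plan is to compare the partial sums $\sigma_d(X)$ (the sum of the $d$ largest entries) of $Z := Z_{a,b}^N(\lambda,\mu)$ and $Z' := Z_{a,b}^N(\lambda',\mu')$, for $N$ sufficiently large. By the definition of $\preceq_{a,b}$, the hypothesis gives $\sigma_d(Z) \leq \sigma_d(Z')$ for every $d$. Set $l := |\lambda|$ and $l' := |\lambda'|$.

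By Lemma~\ref{lemasy2}, for $d \leq l$ the $d$ largest entries of $Z$ are $(\lambda_i+N+r-i)a+b'$ with $i=1,\ldots,d$, and so
\[
\sigma_d(Z) = a \sum_{i=1}^d \lambda_i + C_d, \qquad C_d := a d(N+r) - a \binom{d+1}{2} + d b',
\]
with the analogous formula for $\sigma_d(Z')$ when $d \leq l'$. For $d \leq \min(l,l')$ the hypothesis immediately yields $\sum_{i=1}^d \lambda_i \leq \sum_{i=1}^d \lambda'_i$. The entire argument then rests on establishing $l \leq l'$; once this is known, the inequality extends to all $d$, since $\sum_{i=1}^d \lambda_i = l$ for $d \geq l$ and $\sum_{i=1}^d \lambda'_i \geq l$ (via the case $d=l$).

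To prove $l \leq l'$, I argue by contradiction, assuming $l > l'$. By Lemma~\ref{lemasy2} applied to $\lambda'$ (the case $l'=0$ being vacuous), each of the top $l'$ $\lambda'$-entries of $Z'$ is at least every $\mu'$-entry, so the $l$ largest entries of $Z'$ comprise all of these together with $l-l'$ further entries drawn from the $\lambda'$-entries at indices $i>l'$ (where $\lambda'_i=0$) and/or the $\mu'$-entries. Hence $\sigma_l(Z') = \max_{l' \leq k \leq l} f(k)$, where $f(k)$ denotes the sum of the top $k$ $\lambda'$-entries and the top $l-k$ $\mu'$-entries. A direct computation, using $\sum_{i=1}^k \lambda'_i = l'$ for $k \geq l'$, yields, with $m := l-k$,
\[
f(k) = a l' + C_l + a \sum_{j=1}^m \mu'_j - a m (r - l + m) - m b'.
\]
After bounding $\sum_{j=1}^m \mu'_j \leq |\mu'| = n - l'$, the desired $f(k) < a l + C_l = \sigma_l(Z)$ reduces, for $1 \leq m \leq l-l'$, to
\[
a(n - l) < a m (r - l + m) + m b',
\]
together with the trivial case $m=0$ (where $f(l) = a l' + C_l < a l + C_l$ since $l' < l$). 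Using $r \geq n-1$ one rewrites
\[
a m (r - l + m) + m b' - a(n - l) = a m (r - n + 1) + a (m - 1)(n - l + m) + m b',
\]
a sum of three non-negative terms. It vanishes only when $r = n-1$, $m = 1$, and $b' = 0$ simultaneously, but the asymptotic hypothesis $b > (n-1)a$ forces $b' > 0$ whenever $r = n-1$, ruling this out. Hence $\sigma_l(Z') < \sigma_l(Z)$, contradicting the hypothesis, and $l \leq l'$ is established.

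The main obstacle is precisely this boundary case $r = n-1$: the key inequality is tight there, and it is the strict asymptotic condition $b > (n-1)a$ (i.e., $b' > 0$) that rescues strictness. In all other cases the bound is comfortable, and the proof comes down to the bookkeeping above.
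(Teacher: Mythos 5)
Your proof is correct, and it rests on the same two ingredients as the paper's: Lemma~\ref{lemasy2} together with a partial-sum comparison of the $\lambda$- and $\lambda'$-entries, ending with the asymptotic hypothesis $r\geq n-1$ (and $b'>0$ when $r=n-1$) to exclude a boundary case. What differs is the case decomposition and the bookkeeping index. You use $l=|\lambda|$ and $l'=|\lambda'|$ and isolate the single structural fact $l\leq l'$, after which $\lambda\trianglelefteq\lambda'$ follows formally from the partial-sum inequality at indices $d\leq l$. To kill the remaining case $l>l'$ you write $\sigma_l(Z')$ as a maximum over all ``bi-prefixes'' (top $k$ $\lambda'$-entries with top $l-k$ $\mu'$-entries, $l'\leq k\leq l$) and bound each one strictly. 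The paper instead works with $k=\#\{i:\lambda_i>0\}$ and $k'=\#\{i:\lambda'_i>0\}$, splits on whether $\lambda'_k+N+r-k\geq\mu'_1+N-1$ or not, and in the second case pins down the unique threshold index $l$ where the $\lambda'$-entries dip below $\mu'_1+N-1$, deriving the contradiction from the partial-sum inequality at that single index. These case splits genuinely do not match: there are $(\lambda,\lambda')$ with $|\lambda|>|\lambda'|$ for which the paper's Case~A condition still holds (e.g.\ $\lambda=(3)$, $\lambda'=(1,1)$), where the paper argues directly and you argue by contradiction. Your route is structurally a little cleaner (``reduce to $|\lambda|\leq|\lambda'|$'' is a tidy reformulation), while the paper's threshold argument computes less but requires identifying a specific index; both land on the same tight case $(r,b')=(n-1,0)$. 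Your identity $am(r-l+m)+mb'-a(n-l)=am(r-n+1)+a(m-1)(n-l+m)+mb'$ checks out, and the verification that it vanishes only when $r=n-1$, $m=1$, $b'=0$ is exactly the needed strictness.
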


\begin{proof} If $\lambda=\varnothing$, then this is clear. Now 
assume that $\lambda'=\varnothing$. We show that then we also have 
$\lambda=\varnothing$. Assume that this is not the case. Then, by 
Lemma~\ref{lemasy2}, the largest entry of $Z_{a,b}^N(\lambda, \mu)$ 
is $(\lambda_1+N+r-1)a+ b'$. The condition $Z_{a,b}^N(\lambda, \mu)
\trianglelefteq Z_{a,b}^N(\lambda',\mu')$ implies, in particular, 
that the largest entry of $Z_{a,b}^N(\lambda,\mu)$ is less than or 
equal to the largest entry of $Z_{a,b}^N(\lambda', \mu')$. Using 
Lemma~\ref{lemasy1}, we distinguish two cases. If $\mu_1'=n$ and 
$r=n-1$, we must have $(\lambda_1+N+n-2)a+b' \leq (N+n-1)a$.
Since then also $b'>0$, this forces that $\lambda_1=0$, a contradiction.
Otherwise, we have $(\lambda_1+N+ r-1)a+b' \leq (N+r-1)a+b'$ 
which yields again that $\lambda_1=0$, a contradiction. Thus, if 
$\lambda'= \varnothing$, then $\lambda=\varnothing$, as required.

We can now assume that $\lambda\neq \varnothing$ and $\lambda'\neq 
\varnothing$. Let 
\[ k=\max\{i \geq 1\mid \lambda_i>0\} \qquad \mbox{and} \qquad 
k'=\max\{i \geq 1\mid \lambda'_i>0\}.\]
Clearly, we have $k \leq |\lambda|$ and $k'\leq |\lambda'|$. So, by 
Lemma~\ref{lemasy2}, we have
\[ \lambda_k+N+r-k \geq \mu_1+N-1 \qquad \mbox{and} \qquad 
\lambda_{k'}'+N+r-k' \geq \mu_1'+N-1\, ;\]
also the largest $k$ entries of $Z_{a,b}^N(\lambda,\mu)$
are $(\lambda_i+N+r-i)a+b$ for $1\leq i\leq k$, while the
largest $k'$ entries of $Z_{a,b}^N(\lambda',\mu')$   are 
$(\lambda'_i+N+r-i)a+b'$ for $1\leq i\leq k'$.

Now we distinguish two cases. First case: we also have 
$\lambda_k'+N+r-k \geq \mu_1'+ N-1$. Let $d$ be such that $1\leq d\leq k$;
then, by Remark~\ref{lemasy2a},
the sum of the largest $d$ entries of $Z_{a,b}^N(\lambda,\mu)$ is 
\[ \sum_{1\leq i \leq d} \bigl((\lambda_i+N+r-i)a+b'\bigr),\]
while the sum of the largest $d$ entries of $Z_{a,b}^N(\lambda', 
\mu')$ is 
\[\sum_{1 \leq i \leq d} \bigl((\lambda_i'+N+r-i)a+b'\bigr).\]
Now the condition $Z_{a,b}^N(\lambda, \mu)\trianglelefteq Z_{a,b}^N
(\lambda',\mu')$ implies, in particular, that the first sum is less than 
or equal to the second sum. Hence, we obtain
\[ \sum_{1\leq i \leq d} \lambda_i \leq
\sum_{1\leq i \leq d} \lambda_i',\quad\text{for all}\; d\in\{ 1,\ldots ,k\}\]
and so $\lambda\trianglelefteq \lambda'$, as desired. 

It remains to consider the second case: where  we have
\[ \lambda_k'+N+r-k <\mu_1'+N-1.\]
Note that, since $\lambda_{k'}'+N+r-k' \geq \mu_1'+N-1$, this forces
$k>k'$. So there exists an index $l \in \{k'+1,\ldots,k\}$ such that 
\begin{align*}
\lambda_{i}'+N+r-i&\geq \mu_1'+N-1 \qquad \mbox{for $i=k',k'+1,
\ldots,l-1$},\\ 
\lambda_{l}'+N+r-l&<\mu_1'+N-1.
\end{align*}
Consequently, the $l$ largest entries  of $Z_{a,b}^N(\lambda',\mu')$ are
\[ (\lambda_i'+N+r-i)a+b' \quad (1\leq i \leq l-1) \qquad \mbox{and} \qquad
(\mu_1'+N-1)a.\]
Since $l\leq k$, the $l$ largest entries of $Z_{a,b}^N(\lambda,\mu)$ are
$(\lambda_i+N+r-i)a+b'$ for $1\leq i \leq l$. Taking the sums of these
entries, the condition $Z_{a,b}^N (\lambda, \mu) \trianglelefteq Z_{a,b}^N
(\lambda', \mu')$ implies, in particular, that 
\[\sum_{1\leq i \leq l} \bigl((\lambda_i+
N+r-i)a+b'\bigr)
\leq (\mu_1'+N-1)a+\sum_{1 \leq i \leq l-1} 
\bigl((\lambda_i'+N+r-i)a+b'\bigr).\]
Hence, we obtain 
\[ (\lambda_l+r-l)a+b'+\Bigl(\sum_{1 \leq i \leq l-1} \lambda_i\Bigr)
a \leq (\mu_1'-1)a+\Bigl(\sum_{1 \leq i \leq l-1} \lambda_i'\Bigr)a\]
and so 
\[ ra+b'-la+\Bigl(\sum_{1 \leq i \leq l} \lambda_i\Bigr)a\leq 
\mu_1'a-a+|\lambda'|a.\]
Finally, since, $\mu_1'\leq |\mu'|$, we deduce that 
\[ra+b' \leq  \mu_1'a-a +|\lambda'|a+\Bigl(l-\sum_{1 \leq i \leq l} 
\lambda_i\Bigr)a\leq (n-1)a+\Bigl(l-\sum_{1 \leq i \leq l} 
\lambda_i\Bigr)a.\]
Since $l\leq k$, we have $\lambda_i\geq 1$ for $1\leq i \leq l$.
Hence, the right hand side of the above inequality is less than or
equal to $(n-1)a$. We conclude that $b=ra+b'\leq (n-1)a$, a contradiction. 
\end{proof}

%Assume that $r=n-1$ and $b'=0$. Then last inequality implies
%that $\mu_1'=|\mu'|$ and $\lambda_i=1$ for $1\leq i \leq l$.
%Consequently, all parts of $\lambda$ are $1$ and $\mu'$ has only
%one non-zero part. Now, if $(\lambda,\mu)$ and $(\lambda',\mu')$
%belong to the same combinatorial family, then can exchange the
%roles of $(\lambda,\mu)$ and $(\lambda',\mu')$ in the above argument.
%Hence, all parts of $\lambda'$ must be $1$ and $\mu$ has only
%one non-zero part. This yields sub-asymptotic case.

\begin{cor} \label{propasy} Recall that $b>(n-1)a>0$. Let $(\lambda,
\mu)$ and $(\lambda',\mu')$ be bipartitions of $n$. Then we have
\[(\lambda,\mu)\preceq_{a,b} (\lambda',\mu') \qquad \Leftrightarrow 
\qquad (\lambda,\mu) \trianglelefteq (\lambda',\mu').\]
In particular, $\preceq_{a,b}$ is not just a pre-order but a partial
order.
\end{cor}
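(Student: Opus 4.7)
The plan is to split the biconditional into its two implications, reusing Proposition~\ref{lemasy3} together with the conjugation symmetry of Proposition~\ref{cor11} for the forward direction, and reducing the reverse direction to the case where $(\lambda,\mu)$ and $(\lambda',\mu')$ are adjacent with respect to $\trianglelefteq$. The final assertion that $\preceq_{a,b}$ is actually a partial order is then automatic, since $\trianglelefteq$ on bipartitions is antisymmetric by Definition~\ref{defdouble}.

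For the implication $\preceq_{a,b}\Rightarrow\trianglelefteq$, I would first apply Proposition~\ref{lemasy3} directly to obtain $\lambda\trianglelefteq\lambda'$. Next, applying Proposition~\ref{cor11} to the hypothesis gives $(\overline{\mu}',\overline{\lambda}')\preceq_{a,b}(\overline{\mu},\overline{\lambda})$, and a second invocation of Proposition~\ref{lemasy3} then yields $\overline{\mu}'\trianglelefteq\overline{\mu}$. By Remark~\ref{remdouble} these two conditions together are equivalent to $(\lambda,\mu)\trianglelefteq(\lambda',\mu')$.

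For the reverse implication, the transitivity of $\preceq_{a,b}$ reduces the verification to adjacent pairs, which are classified by Remark~\ref{remadjdouble} into three cases. In each case the two symbols $Z_{a,b}^N(\lambda,\mu)$ and $Z_{a,b}^N(\lambda',\mu')$ differ in exactly two entries, one shifted by $+a$ and the other by $-a$. In cases (a) and (b) both affected entries live in the same row of the symbol, and the formulas of Definition~\ref{def0} show that the entry raised is strictly larger than the entry lowered. This is a classical anti-Robin-Hood modification, and the standard majorization principle delivers $Z_{a,b}^N(\lambda,\mu)\trianglelefteq Z_{a,b}^N(\lambda',\mu')$ at once.

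The hard part is case~(c), in which the entry $L_i=(\lambda_i+N+r-i)a+b'$ of the top row is raised by $a$ while the entry $M_j=(\mu_j+N-j)a$ of the bottom row is lowered by $a$: a priori $M_j$ could exceed $L_i$ by at least $a$, which would turn the modification into a genuine Robin-Hood move and reverse the desired dominance. Here the asymptotic hypothesis $b>(n-1)a$ is essential. Since $\lambda'$ is a partition after adding a box at position $i$ one has $i\leq|\lambda|+1$; combined with $\mu_j\leq|\mu|$, $\lambda_i\geq 0$, and the consequences of $b>(n-1)a$, namely $r\geq n-1$ in general and $r\geq n$ when $b'=0$, a short arithmetic estimate yields $M_j-L_i<a$ uniformly. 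Consequently the new pair $\{L_i+a,M_j-a\}$ is strictly wider than the old pair $\{L_i,M_j\}$, the modification is again anti-Robin-Hood, and the majorization principle concludes the proof.
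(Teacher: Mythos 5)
Your forward direction is exactly the paper's: Proposition~\ref{lemasy3} applied once directly and once after conjugating via Proposition~\ref{cor11}, with Remark~\ref{remdouble} supplying the final equivalence. For the reverse implication, however, you take a genuinely different route from the one the paper actually writes out. The paper reduces to adjacent pairs via Remark~\ref{remadjdouble} (as you do) but then \emph{omits} the direct combinatorial verification, instead deferring to the composite of Theorem~\ref{mainbn} with the implication ($*$) in Example~\ref{expasym2}, which goes through the representation theory of $W_n$ (Pieri's rule, the explicit $\ba$-function formula, tensoring with $\sgn$). You instead carry out the direct check the paper only asserts is possible. Your case~(c) analysis is the only genuinely delicate step, and the key bound $M_j - L_i < a$ does hold: with $i\le|\lambda|+1$, $\mu_j\le|\mu|=n-|\lambda|$, $\lambda_i\ge 0$, $j\ge 1$ one gets $(\mu_j-j-\lambda_i-r+i-1)a\le(n+1-(n-1)-1-1)a=0\le b'$, with equality throughout only when $r=n-1$, which forces $b'>0$ and restores the strict inequality; and an anti-Robin-Hood modification of a multiset, i.e.\ raising an entry $x$ by $a$ and lowering an entry $y$ by $a$ with $y-x\le a$, does yield dominance of the modified multiset over the original. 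The trade-off is clear: the paper's route is a one-line combination of results proved anyway, at the cost of letting a statement in Section~\ref{secasym} depend on Sections~\ref{sec3}--\ref{sec3a}; your version is self-contained within the purely combinatorial part of the paper but requires the careful arithmetic estimate exploiting the asymptotic hypothesis. Both are correct.
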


\begin{proof} Assume first that $(\lambda,\mu)\preceq_{a,b} (\lambda',
\mu')$. Then, by Proposition~\ref{cor11}, we know that we also have
$(\overline{\mu}',\overline{\lambda}')\preceq_{a,b} 
(\overline{\mu},\overline{\lambda})$. Now Proposition~\ref{lemasy3}
implies that $\lambda\trianglelefteq\lambda'$ and 
$\overline{\mu}' \trianglelefteq \overline{\mu}$, as desired.

Conversely, assume that $(\lambda,\mu) \trianglelefteq (\lambda',\mu')$.
In order to show that $(\lambda,\mu) \preceq_{a,b} (\lambda',\mu')$, 
we may assume without loss of generality that $(\lambda,\mu) \neq 
(\lambda',\mu')$ and that $(\lambda,\mu)$ and $(\lambda',\mu')$ are adjacent 
with respect to $\trianglelefteq$. Thus, we are in one of the three
cases in Remark~\ref{remadjdouble}. In each of these cases, one can
check directly that $(\lambda,\mu) \preceq_{a,b} (\lambda',\mu')$. 
We omit the details as this will also follow by combining 
Theorem~\ref{mainbn} with the implication ($*$) in 
Example~\ref{expasym2} below. 
\end{proof}

%\begin{lem} \label{equneq} Assume that $b'>0$. Let $(\lambda,\mu)$
%and $(\lambda',\mu')$ be bipartitions of $n$. Then 
%\[ (\lambda,\mu) \preceq_{a,b} (\lambda',\mu') \qquad \Leftrightarrow\qquad
%(\lambda,\mu) \preceq_{2,2r+1} (\lambda',\mu').\] 
%\end{lem}
%
%\begin{proof} ??? 
%\end{proof}
%\marginpar{TODO or leave for later}

%%%%%%%%%%%%%%%%%%%%%%%%%%%%%%%%%%%%%%%%%%%%%%%%%%%%%%%%%%%%%%%%%%%%%%%%%%%
\section{The pre-order relation $\preceq_L$ in type $B_n$} \label{sec3}

Throughout this section, let $n \geq 1$ and $W_n$ be a Coxeter group 
of type $B_n$, with generators and diagram given by 
\begin{center}
\begin{picture}(250,20)
\put( 10, 5){$B_n$}
\put( 61,13){$t$}
\put( 91,13){$s_1$}
\put(121,13){$s_2$}
\put(201,13){$s_{n{-}1}$}
\put( 65, 5){\circle*{5}}
\put( 95, 5){\circle*{5}}
\put(125, 5){\circle*{5}}
\put( 65, 7){\line(1,0){30}}
\put( 65, 3){\line(1,0){30}}
\put( 95, 5){\line(1,0){30}}
\put(125, 5){\line(1,0){20}}
\put(155,5){\circle*{1}}
\put(165,5){\circle*{1}}
\put(175,5){\circle*{1}}
\put(185, 5){\line(1,0){20}}
\put(205, 5){\circle*{5}}
\end{picture}
\end{center}
As in Example~\ref{bnsetup}, we write $\Irr(W_n)=\{E^{(\lambda, \mu)} 
\mid (\lambda,\mu) \mbox{ is a bipartition of } n \}$. A weight function 
$L \colon W_n \rightarrow \Z$ is specified by the two parameters 
$b:=L(t)$ and $a:=L(s_i)$ for $1\leq i \leq n-1$. From now until 
Example~\ref{bnanull} (at the very end of this section), we shall assume that 
\begin{center}
\fbox{$\;b \geq 0$ and $a>0.\;$}
\end{center}
(It is convenient to allow the possibility that $b=0$ because this is 
related to groups of type $D_n$; see Example~\ref{expdn} in the following 
section.) The main result of this section is Theorem~\ref{mainbn}, which 
provides a combinatorial description of the pre-order relation $\preceq_L$ 
on $\Irr(W_n)$ in terms of the pre-orders on bipartitions considered in
Section~\ref{sec1}.

We begin by collecting some known results. The effect of tensoring with 
the sign representation is given as follows.

\begin{lem} \label{bnsign} Let $(\lambda,\mu)$ be a bipartition of $n$. 
Then $E^{(\lambda,\mu)} \otimes \sgn=E^{(\overline{\mu},
\overline{\lambda})}$.
\end{lem}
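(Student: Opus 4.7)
The plan is to invoke the classical construction of $\Irr(W_n)$ and to cite the explicit formula recorded in \cite[\S 5.5]{gepf}. Let me sketch the reasoning that underlies that reference.

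First, I would recall the standard realisation of $E^{(\lambda,\mu)}$ via induction from the parabolic subgroup $W_{|\lambda|}\times W_{|\mu|}$ of $W_n$. On the first factor one extends the $\fS_{|\lambda|}$-representation $E^\lambda$ to $W_{|\lambda|}$ by making the $t$-type generator act trivially; on the second factor one extends $E^\mu$ twisted by the linear character $\sgn'$ that sends the corresponding $t$-type generator to $-1$. The convention is pinned down by demanding that $((n),\varnothing)\mapsto 1_{W_n}$ and $(\varnothing,(1^n))\mapsto\sgn$, in accordance with Example~\ref{bnsetup}.

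Next, I would compute $E^{(\lambda,\mu)}\otimes\sgn$ by pushing $\sgn$ inside the induction, using the identity
\[
\mathrm{Ind}(V)\otimes\sgn_{W_n}=\mathrm{Ind}\bigl(V\otimes\mathrm{Res}\,\sgn_{W_n}\bigr)
\]
and the fact that $\mathrm{Res}_{W_{|\lambda|}\times W_{|\mu|}}\sgn_{W_n}=\sgn_{W_{|\lambda|}}\boxtimes\sgn_{W_{|\mu|}}$. On the $\fS$-factors this converts $E^\lambda$ into $E^{\overline{\lambda}}$ and $E^\mu$ into $E^{\overline{\mu}}$ by the type-$A$ statement from Example~\ref{ordA}. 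On the $(\Z/2\Z)$-factors the trivial extension and the $\sgn'$-extension are interchanged, which precisely exchanges the roles of the two partitions in the bipartition parametrisation. Reassembling, the induced representation is $E^{(\overline{\mu},\overline{\lambda})}$.

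The only real obstacle is bookkeeping: one must verify that the answer is $(\overline{\mu},\overline{\lambda})$ rather than $(\overline{\lambda},\overline{\mu})$. This is most cleanly settled by the normalisation consistency check. The map $E\mapsto E\otimes\sgn$ is an involution on $\Irr(W_n)$ that must send the unit representation to the sign representation, i.e.\ send $((n),\varnothing)$ to $(\varnothing,(1^n))$; only the rule $(\lambda,\mu)\mapsto(\overline{\mu},\overline{\lambda})$ has this property (the alternative $(\lambda,\mu)\mapsto(\overline{\lambda},\overline{\mu})$ would fix $((n),\varnothing)$). Since this and the general argument above together identify the sign-twist on the set of bipartition labels, and since any two involutions of $\Irr(W_n)$ agreeing with $-\otimes\sgn$ at one point plus satisfying the transfer-through-induction formula must coincide, the claim follows.
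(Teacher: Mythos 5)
The paper's ``proof'' of this lemma is a one-line citation to \cite[5.5.6]{gepf}; your proposal correctly unrolls the argument that lies behind that reference, so the two approaches are substantively the same (you are just proving what the paper delegates to the book). The structure you give -- the Clifford-theoretic realisation of $E^{(\lambda,\mu)}$ by induction from $W_{|\lambda|}\times W_{|\mu|}$, pushing $\sgn_{W_n}$ through the induction via $\Ind(V)\otimes\sgn=\Ind(V\otimes\mathrm{Res}\,\sgn)$, observing that on each $B$-type factor this conjugates the symmetric-group label while swapping the trivial and $\sgn'$-twisted extensions, and then pinning down the ordering via the normalisation $1_{W_n}\leftrightarrow((n),\varnothing)$, $\sgn\leftrightarrow(\varnothing,(1^n))$ -- is the standard argument and is sound. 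One caveat worth making explicit: after the swap you land on an induction from $W_{|\lambda|}\times W_{|\mu|}$ with the roles of trivial/$\sgn'$-twist reversed, and to read this off as $E^{(\overline{\mu},\overline{\lambda})}$ one uses that $W_{|\lambda|}\times W_{|\mu|}$ and $W_{|\mu|}\times W_{|\lambda|}$ are conjugate in $W_n$, so the two induced representations agree; your phrase ``precisely exchanges the roles of the two partitions'' papers over this conjugation, and it would be cleaner to say it. The only genuine blemish is the final sentence: the appeal to ``any two involutions of $\Irr(W_n)$ agreeing with $-\otimes\sgn$ at one point plus satisfying the transfer-through-induction formula must coincide'' is both unnecessary (your analysis already leaves exactly two candidates, $(\overline{\lambda},\overline{\mu})$ and $(\overline{\mu},\overline{\lambda})$, and the single normalisation check eliminates the first) and not a precisely formulated or proved assertion as you have written it; it should simply be deleted.
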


\begin{proof} This can be found, for example, in \cite[5.5.6]{gepf}.
\end{proof}

\begin{prop}[Lusztig \protect{\cite[22.14]{Lusztig03}}] \label{bnafunc}
Let $(\lambda,\mu)$ be a bipartition of $n$. Then
\[ \ba_{E^{(\lambda,\mu)}}=\ba_{a,b}(\lambda,\mu); \qquad \mbox{see 
Definition~\ref{def2}}.\]
\end{prop}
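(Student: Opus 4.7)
The plan is to recognize Proposition~\ref{bnafunc} as the explicit value of Lusztig's $\ba$-invariant in type $B_n$ (computed via symbols) and to transport it to the inductive definition of $\ba_E$ used in this paper. First, as already noted in Remark~\ref{lem12}(i), the right-hand side $\ba_{a,b}(\lambda,\mu)$ from Definition~\ref{def2} is exactly Lusztig's formula $\sum_{i}(i-1)z_i - \sum_{i}(i-1)z_i^0$ given in \cite[Prop.~22.14]{Lusztig03}; the subtraction of the contribution coming from the "trivial" symbol $Z^0 \in \cM_{a,b;0}^N$ is precisely what makes the expression independent of the chosen~$N$ and stable under the shift operation, and this is a short bookkeeping verification from Definitions~\ref{def0} and~\ref{def2}.

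Second, Lusztig's formula in \cite{Lusztig03} computes the $\ba$-invariant associated with the generic Iwahori--Hecke algebra of $W_n$ with parameters $(a,b)$, defined via generic degrees. The equivalence of this generic-degree definition of $\ba_E$ with the purely inductive Definition~\ref{defafun} holds for any finite Coxeter group $W$ and any weight function $L$; this is proved in \cite[Remark~4.3]{mycarp}. Specialising to $W=W_n$ with $L(t)=b$, $L(s_i)=a$, and combining with the previous paragraph, one obtains $\ba_{E^{(\lambda,\mu)}}=\ba_{a,b}(\lambda,\mu)$.

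If one preferred an independent, inductive derivation within the framework of this paper, the route would be induction on $n$: for a fixed $(\lambda,\mu)$, exhibit a maximal parabolic subgroup $W_J\subset W_n$ (of type $B_k\times A_{n-k-1}$ for a suitable $k$ depending on the shape of $(\lambda,\mu)$) together with an irreducible $M\in\Irr(W_J)$ with $M\uparrow E^{(\lambda,\mu)}$ achieving $\ba_M=\ba_{a,b}(\lambda,\mu)$. One then checks, via the sign-twisting clause of Definition~\ref{defafun} combined with Lemma~\ref{bnsign} and Proposition~\ref{cor11}, that no other maximal parabolic does strictly better on $E^{(\lambda,\mu)}$ or on $E^{(\lambda,\mu)}\otimes\sgn$. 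The main obstacle on this route would be to align the type-$B$ branching rules for $\Ind_{W_J}^{W_n}$ with the shift and conjugation operations on symbols from Section~\ref{sec1}; since this amounts to re-deriving Lusztig's formula, the citation route is cleaner and is the one I would adopt for the written proof.
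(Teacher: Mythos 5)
Your proposal is correct and is essentially the paper's own route: the proposition is stated as an attribution to Lusztig~\cite[22.14]{Lusztig03}, and the bridge between Lusztig's generic-degree definition of $\ba_E$ and the inductive Definition~\ref{defafun} is exactly the citation to \cite[Remark~4.3]{mycarp} that the paper already set up in Section~\ref{sec2}. One small point the paper flags and you should mention explicitly: Lusztig assumes $b>0$, whereas the proposition is used here also for $b=0$, so one needs the short observation that the same formula persists in that boundary case.
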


(Note that Lusztig assumes that $b>0$ but this formula also works for
$b=0$.)

\begin{rem} \label{remomg1} By Remark~\ref{defafun1} and 
Lemma~\ref{bnsign}, we have
\[ \omega_L(E^{(\lambda,\mu)})=\ba_{a,b}(\overline{\mu},
\overline{\lambda})-\ba_{a,b}(\lambda,\mu).\]
Using \cite[Lemmas~6.2.6 and 6.2.8]{gepf}, we obtain the 
following more direct formula:
\[\omega_L(E^{(\lambda,\mu)})=\bigl(|\lambda|-|\mu|\bigr)\,b+
2\bigl(n(\overline{\lambda})-n(\lambda)+n(\overline{\mu})-
n(\mu)\bigr)\,a,\]
where $n(\nu)$ (for any partition $\nu$) is defined as in 
Example~\ref{ordA}.
\end{rem}

Using the combinatorics in Section~\ref{sec1}, Lusztig has determined
the families of $\Irr(W_n)$. It turns out that these are precisely given 
by the ``combinatorial families'' in Definition~\ref{def1}.

\begin{prop}[Lusztig \protect{\cite[23.1]{Lusztig03}}] \label{bnfam}
Let $(\lambda,\mu)$ and $(\lambda',\mu')$ be bipartitions of $n$. Then:
\[ E^{(\lambda,\mu)}, E^{(\lambda',\mu')} \mbox{ belong to the same 
family} \quad \Leftrightarrow \quad Z_{a,b}^N(\lambda,\mu)=
Z_{a,b}^N(\lambda',\mu'),\]
where $N \geq 0$ is a sufficiently large integer 
(see Definition~\ref{def0}). 
\end{prop}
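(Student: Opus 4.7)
\smallskip

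\noindent\textbf{Proof proposal.} I would proceed by induction on $n$, the cases $n=0,1$ being immediate since the irreducibles are in bijection with their defining multisets. For the inductive step I combine the recursive Definition~\ref{family2} with the reduction to maximal parabolic subgroups in Remark~\ref{ordmax}. The maximal parabolics of $W_n$ are (up to conjugacy) $W_i\times\fS_{n-i}$ for $0\leq i\leq n-1$ (plus the subgroup $\fS_n$ obtained by removing $t$), and their irreducibles are of the form $E^{(\alpha,\beta)}\boxtimes E^\nu$. By the inductive hypothesis together with Remark~\ref{ordprod} and Example~\ref{ordA} (where families in type $A$ are singletons), families in such a parabolic are classified by the pair $\bigl(Z_{a,b}^N(\alpha,\beta),\,\nu\bigr)$.

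The central ingredient is an explicit description of the $j$-induction, i.e.\ of the relation $\rightsquigarrow_L$, from a parabolic in terms of symbols. Given $M=E^{(\alpha,\beta)}\boxtimes E^\nu\in\Irr(W_i\times\fS_{n-i})$, a constituent $E^{(\lambda,\mu)}$ of $\Ind(M)$ satisfies $M\rightsquigarrow_L E^{(\lambda,\mu)}$ iff the Littlewood--Richardson-type rule for skew bipartitions is consistent with $\nu$ and the equality $\ba_{a,b}(\lambda,\mu)=\ba_{a,b}(\alpha,\beta)+n(\nu)a$ holds. Unpacking Definition~\ref{def2} and using Proposition~\ref{bnafunc}, this numerical equality forces $Z_{a,b}^N(\lambda,\mu)$ to be obtainable from $Z_{a,b}^N(\alpha,\beta)$ by inserting $|\nu|$ quanta of $a$ along a prescribed sequence of entries; in particular, the multiset $Z_{a,b}^N(\lambda,\mu)$ is completely determined by the data of $M$, not by the internal combinatorics of the skew filling.

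For the forward implication, a chain as in Definition~\ref{family2} with $M_i,M_i'$ in the same family of the parabolic gives (by the inductive hypothesis) $Z_{a,b}^N(\alpha_i,\beta_i)=Z_{a,b}^N(\alpha_i',\beta_i')$, and the previous paragraph then shows that the consecutive members $E_{i-1},E_i$ of the chain share the same multiset; the sign-twisted alternative in Definition~\ref{family2} is handled symmetrically using Lemma~\ref{bnsign} together with the symbol conjugation from Lemma~\ref{lem11a}. For the reverse implication, given two bipartitions with equal multiset $Z$, I would locate an entry of $Z$ that can simultaneously be read off the first rows of both symbols (possibly after applying conjugation to work with the larger of the two row lengths), restrict to a maximal parabolic to obtain bipartitions of a smaller rank with again equal multisets, and splice an inductive chain back up via $\rightsquigarrow_L$.

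The main obstacle is the reverse direction: producing explicit $j$-induction chains that realise combinatorial equality of multisets. It requires a careful case analysis depending on the defect, namely whether $b'>0$ (where symbols are ordinary sets and Remark~\ref{expsing} says each combinatorial family is a singleton, making the direction trivial) or $b'=0$ (where repeated entries occur and one must choose how to ``split'' a doubled entry between the two rows). In the latter case, the argument reduces to verifying that the two bipartitions in a combinatorial family can be transformed into one another by elementary moves of the form ``swap a pair of entries between the two rows of the symbol'', each such move being realised by a common $\rightsquigarrow_L$-predecessor in a suitably chosen maximal parabolic. This is the combinatorial bookkeeping with symbols already anticipated in Section~\ref{sec1}.
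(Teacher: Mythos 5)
The paper does \emph{not} prove this proposition: it is cited directly from Lusztig \cite[23.1]{Lusztig03} and used as a known input. So there is no in-paper proof to compare against, and your sketch has to be judged on its own terms.

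For the forward implication your broad plan is workable: combine the recursive Definition~\ref{family2} with the reduction to maximal parabolics (Remark~\ref{ordmax}) and then to inductions of the special form $M\boxtimes\sgn_l$ (this is the content of Lemma~\ref{ordB1}), and invoke the fact that the symbol of a $j$-induced module is forced. But be careful about how you phrase that last step: the ``iff'' you write for $M\rightsquigarrow_L E^{(\lambda,\mu)}$ in terms of a branching rule plus the equality $\ba_{a,b}(\lambda,\mu)=\ba_{a,b}(\alpha,\beta)+n(\nu)a$ is just the definition of $\rightsquigarrow_L$ unwound via Remark~\ref{ordprod} and Proposition~\ref{bnafunc}; it does not by itself show that the multiset $Z_{a,b}^N(\lambda,\mu)$ is determined. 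That determination is precisely Lusztig's \cite[22.17]{Lusztig03}, recorded here as Lemma~\ref{pieri3}, and it is proved for $\nu=(1^l)$, which is the only case one needs after the reduction in Lemma~\ref{ordB1}. Working with arbitrary $\nu$ as you do adds complication and, as stated, assumes the key combinatorial fact rather than establishing it.

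The genuine gap is the reverse implication. When $b'=0$, the bipartitions in a combinatorial family differ in how repeated entries of $Z$ are split between the two rows, and one has to exhibit explicit $\rightsquigarrow_L$-chains (or sign-twisted ones, via Lemma~\ref{bnsign}) realizing such a redistribution. Your plan to ``locate an entry that can be read off the first row of both symbols, restrict to a smaller parabolic, and splice a chain back up'' does not address the actual difficulty: precisely because the two bipartitions are different members of the same family, an entry in the first row of one symbol need not sit in the first row of the other, and it is not clear that removing a carefully chosen entry preserves the hypothesis of equal multisets for bipartitions of smaller rank in a way that closes the induction. This is the heart of Lusztig's theorem, and your sketch acknowledges but does not supply the argument.
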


Let us now turn to the description of the pre-order relation $\preceq_L$. 
To state the following result, we recall that the maximal parabolic
subgroups of $W_n$ are of the form $W_k \times H_l$ where $n=k+l$ 
($k \geq 0$, $l\geq 1$). Here, $W_k$ is of type $B_k$ (generated by $t,s_1,
\ldots,s_{k-1}$) and $H_l$ is of type $A_{l-1}$ (generated by $s_{k+1},
s_{k+2}, \ldots,s_{n-1}$). It is understood that $W_0=H_1=\{1\}$. Let 
$\sgn_l$ denote the sign representation of $H_l$.

\begin{lem}[Cf. Spaltenstein \protect{\cite[\S 3]{Spalt}}] \label{ordB1} 
Let $E,E'\in \Irr(W_n)$. Then $E \preceq_L E'$ if and only if there exists 
a sequence $E=E_0,E_1,\ldots,E_m=E'$ in $\Irr(W_n)$ such that, for each 
$i \in \{1,2,\ldots,m\}$, the following condition is satisfied: \\There 
exists a decomposition $n=k_i+l_i$ ($k_i\geq 0$, $l_i\geq 1$) and $M_i,
M_i'\in\Irr(W_{k_i})$ where $M_i\preceq_L M_i'$ within $\Irr(W_{k_i})$, 
such that either 
\begin{align*}
M_i \boxtimes \sgn_{l_i} \uparrow E_{i-1} \qquad &\mbox{and}
\qquad M_i'\boxtimes \sgn_{l_i} \rightsquigarrow_L E_i\\\intertext{or}
M_i \boxtimes \sgn_{l_i}\uparrow E_i \otimes \sgn \qquad &
\mbox{and} \qquad M_i' \boxtimes \sgn_{l_i} \rightsquigarrow_L 
E_{i-1}\otimes \sgn.
\end{align*}
(Here, $M_i \boxtimes \sgn_{l_i}$ and $M_i'\boxtimes \sgn_{l_i}$ are 
representations of $W_{k_i} \times H_{l_i} \subseteq W_n$.)
\end{lem}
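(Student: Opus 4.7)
The plan is to prove both directions of the lemma. The backward direction $(\Leftarrow)$ is immediate: each step allowed by Lemma~\ref{ordB1} is a special case of the general step in Definition~\ref{mydef} with $W_{I_i}=W_{k_i}\times H_{l_i}$. Indeed, Remark~\ref{ordprod}(b) combined with the trivial relation $\sgn_{l_i}\preceq_L\sgn_{l_i}$ promotes the hypothesis $M_i\preceq_L M_i'$ in $\Irr(W_{k_i})$ to $M_i\boxtimes\sgn_{l_i}\preceq_L M_i'\boxtimes\sgn_{l_i}$ in $\Irr(W_{k_i}\times H_{l_i})$.

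For the forward direction $(\Rightarrow)$, I would first apply Remark~\ref{ordmax} to pass to a defining sequence for $E\preceq_L E'$ in which every $W_{I_i}$ is a maximal parabolic of $W_n$, necessarily of the form $W_{k_i}\times H_{l_i}$ with $k_i+l_i=n$ and $l_i\geq 1$. Write the representations as $M_i=A_i\boxtimes E^{\nu_i}$ and $M_i'=A_i'\boxtimes E^{\nu_i'}$; then Remark~\ref{ordprod}(b) together with Example~\ref{ordA} yields $A_i\preceq_L A_i'$ in $\Irr(W_{k_i})$ and the dominance $\nu_i\trianglelefteq\nu_i'$. The task is now to refine each such step into a chain of steps of the form required by the lemma.

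The key construction is to split each step $E_{i-1}\to E_i$ into two sub-steps by inserting an intermediate $F_i\in\Irr(W_n)$ chosen as a constituent of $\Ind(A_i'\boxtimes E^{\nu_i})$ with maximal $\ba$-value, so that $A_i'\boxtimes E^{\nu_i}\rightsquigarrow_L F_i$; the first sub-step then changes only the $W_{k_i}$-component (from $A_i$ to $A_i'$) while the second changes only the $H_{l_i}$-component (from $E^{\nu_i}$ to $E^{\nu_i'}$). In each sub-step, Example~\ref{ordA}(c) provides a Young subgroup $\fS_{\bar\nu}\subseteq H_{l_i}$ whose sign representation realises the dominance on the type-$A$ side; by transitivity of $\uparrow$ and of $\rightsquigarrow_L$, we descend to the smaller parabolic $W_{k_i}\times\fS_{\bar\nu}\cong W_{k_i}\times H_{\rho_1}\times\cdots\times H_{\rho_p}$, now with sign representations on each type-$A$ factor. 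Finally, to consolidate into a single $H_l$-factor, I would induce in stages through the chain $W_{k_i+\rho_1+\cdots+\rho_j}\times H_{\rho_{j+1}}\times\cdots\times H_{\rho_p}$ of intermediate parabolics of $W_n$; crucially, the intermediate inductions act only on the first $p-j$ type-$A$ factors, so every constituent retains $\sgn_{\rho_p}$ on the last factor, and Remark~\ref{ordmax}(a),(b) yields constituents of the form $\tilde A\boxtimes\sgn_{\rho_p}$ that provide valid steps of the shape demanded by the lemma.

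The main obstacle is verifying at each intermediate stage that the $\preceq_L$-relation between the two representations $\tilde A$ of $W_{k_i+\rho_1+\cdots+\rho_{p-1}}$ at the endpoints of a sub-step is genuinely inherited from the original $A_i\preceq_L A_i'$ (so that it can legitimately serve as the ``$M_i\preceq_L M_i'$'' required by the statement). This requires unpacking the recursive definition of $\preceq_L$ and lifting each elementary step in the chain for $A_i\preceq_L A_i'$ through the intermediate parabolics via Remarks~\ref{ordprod}(b) and~\ref{ordmax}, with the additivity of $\ba$-invariants across direct products ensuring the $\rightsquigarrow_L$-condition propagates correctly. This bookkeeping is essentially the content of Spaltenstein's reduction in \cite[\S 3]{Spalt}.
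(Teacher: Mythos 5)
Your proof is correct and follows essentially the same strategy as the paper: reduce to an elementary step in Definition~\ref{mydef}, pass to a maximal parabolic $W_k\times H_l$ via Remark~\ref{ordmax}, replace the type-$A$ factors by sign representations on a Young subgroup via Example~\ref{ordA}(c), regroup the resulting factors so that only one type-$A$ factor $H_{l'}$ remains outside a larger $W_{k'}$, and then verify that $\preceq_L$ descends to the $W_{k'}$-components via Remark~\ref{ordprod}. The one place where you diverge is the insertion of the intermediate $F_i$ to split each step into a ``change only $A$'' sub-step and a ``change only $\nu$'' sub-step. This is sound but unnecessary: the paper instead applies Example~\ref{ordA}(c) directly to the pair $\nu_i\trianglelefteq\nu_i'$, which produces a \emph{single} Young subgroup $H_{\overline{\nu_i'}}$ together with $\sgn_{\overline{\nu_i'}}\uparrow E^{\nu_i}$ \emph{and} $\sgn_{\overline{\nu_i'}}\rightsquigarrow_L E^{\nu_i'}$ simultaneously, so both sides acquire the same sign representation on the $H$-side in one stroke with no intermediate $F_i$ required. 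Your ``main obstacle'' paragraph correctly identifies what needs checking (that the $\preceq_L$-relation between the $W_{k'}$-components is inherited from $A_i\preceq_L A_i'$), and your invocation of Remarks~\ref{ordprod} and~\ref{ordmax} is exactly the tool the paper uses to resolve it; the multi-stage chain of intermediate parabolics you describe is equivalent to the paper's single regrouping of the first $d-1$ factors of $H_{\overline{\mu}}$ into $W_{k'}$.
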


\begin{proof} The ``if'' part is clear by the definition of $\preceq_L$.
To prove the ``only if'' part, it is sufficient to consider an elementary
step in Definition~\ref{mydef}. That is, we can assume that there is a 
subset $I \subsetneqq S$ and $M_1,M_1'\in \Irr(W_I)$, where $M_1 \preceq_L 
M_1'$ within $\Irr(W_I)$, such that one of the following two conditions 
holds.
\begin{itemize}
\item $M_1\uparrow E$ and $M_1' \rightsquigarrow_L E'$.
\item $M_1 \uparrow E'\otimes \sgn$ and $M_1' \rightsquigarrow_L 
E\otimes \sgn$.
\end{itemize}
By Remark~\ref{ordmax}, we can further assume that $W_I$ is a maximal
parabolic subgroup of $W_n$, that is, we have $W_I=W_k\times H_l$ where
$k \geq 0$, $l\geq 1$. Since $H_l \cong \fS_l$, we can write
\[ M_1=\tilde{M}_1 \boxtimes E^{\lambda} \qquad \mbox{and}\qquad M_1'=
\tilde{M}_1'\boxtimes E^{\mu}\]
where $\tilde{M}_1,\tilde{M}_1'\in \Irr(W_k)$ and $\lambda$, $\mu$ are 
partitions of $l$. By Remark~\ref{ordprod} and Example~\ref{ordA}, we have
\[ \tilde{M}_1 \preceq_L \tilde{M}_1' \quad \mbox{(within $\Irr(W_k)$)} 
\qquad \mbox{and} \qquad \lambda \trianglelefteq \mu.\]
Let $H_{\overline{\mu}} \subseteq H_l$ be the parabolic subgroup 
corresponding to the Young subgroup $\fS_{\overline{\mu}} \subseteq\fS_l$. 
By Example~\ref{ordA} (see the proof of the implication ``(b) 
$\Rightarrow$ (c)''), we have $\sgn_{\overline{\mu}} \uparrow E^\lambda$ 
and $\sgn_{\overline{\mu}} \rightsquigarrow_L E^\mu$ where 
$\sgn_{\overline{\mu}}$ is the sign representation of $H_{\overline{\mu}}$.
Hence, by Remark~\ref{ordprod} and the transitivity of induction, one 
of the following two conditions holds.
\begin{itemize}
\item $\tilde{M}_1\boxtimes \sgn_{\overline{\mu}} \uparrow E$ and 
$\tilde{M}_1'\boxtimes \sgn_{\overline{\mu}} \rightsquigarrow_L E'$.
\item $\tilde{M}_1\boxtimes \sgn_{\overline{\mu}}\uparrow E'\otimes 
\sgn$ and $\tilde{M}_1' \boxtimes \sgn_{\overline{\mu}} 
\rightsquigarrow_L E\otimes \sgn$.
\end{itemize}
Thus, we have replaced the maximal parabolic subgroup $W_I=W_k \times 
H_l$ (that we started with) by the parabolic subgroup $W_k \times 
H_{\overline{\mu}}$, where we consider the sign representation on the 
$H_{\overline{\mu}}$-factor. We will now embed $W_k \times 
H_{\overline{\mu}}$ into a different maximal parabolic subgroup such 
that the required conditions will be satisfied. 

For this purpose, let $\overline{\mu}=(\overline{\mu}_1\geq
\overline{\mu}_2\geq\ldots\geq \overline{\mu}_d \geq 1)$ be the non-zero 
parts of $\overline{\mu}$. Correspondingly, we have a direct product 
decomposition 
\[ H_{\overline{\mu}}=H_{\overline{\mu}_1}\times H_{\overline{\mu}_2}
\times \cdots \times H_{\overline{\mu}_d}.\]
Grouping the first $d-1$ factors together, we obtain 
\[ W_k \times H_{\overline{\mu}} \subseteq (W_k \times H_{l_1}) \times 
H_{l'} \subseteq W_{k'} \times H_{l'}\]
where $l_1:=\overline{\mu}_1+\overline{\mu}_2+ \ldots + 
\overline{\mu}_{d-1}$, $k':=k+l_1$ and $l':=\overline{\mu}_d$. Using 
Remarks~\ref{ordprod} and \ref{ordmax}, we conclude that there exist 
$M,M' \in \Irr(W_{k'})$ such that 
\begin{equation*}
\tilde{M}_1 \boxtimes \sgn_{\overline{\mu}} \uparrow M \boxtimes 
\sgn_{l'}, \qquad \tilde{M}_1' \boxtimes \sgn_{\overline{\mu}} 
\rightsquigarrow_L M'\boxtimes \sgn_{l'}\tag{$*$}
\end{equation*} 
and one of the following two conditions holds.
\begin{itemize}
\item $M\boxtimes \sgn_{l'} \uparrow E$ and $M'\boxtimes 
\sgn_{l'} \rightsquigarrow_L E'$.
\item $M\boxtimes \sgn_{l'} \uparrow E'\otimes \sgn$ and 
$M' \boxtimes \sgn_{l'} \rightsquigarrow_L E\otimes\sgn$.
\end{itemize}
Finally, we use Remark~\ref{ordprod} to conclude that 
$M \preceq_L M'$ within $\Irr(W_{k'})$. Indeed, since $\tilde{M}_1
\preceq_L \tilde{M}'_1$ within $\Irr(W_k)$, we have $\tilde{M}_1
\boxtimes \sgn_{\overline{\mu}} \preceq_L \tilde{M}_1' \boxtimes 
\sgn_{\overline{\mu}}$ within $\Irr(W_k \times H_{\overline{\mu}})$. 
Then ($*$) implies that $M \boxtimes \sgn_{l'} \preceq_L M' \boxtimes 
\sgn_{l'}$ within $\Irr(W_{k'} \times H_{l'})$ and, hence, $M \preceq_L 
M'$ within $\Irr(W_{k'})$, as required.
\end{proof}

In order to proceed, we need some more precise information about the
induction of representations from $W_k \times H_l$ to $W_n$. The basic
tool is the following rule:

\begin{lem}[Pieri's Rule for $W_n$] \label{pieri} Let $n=k+l$ where 
$k \geq 0$, $l \geq 1$. Let $(\alpha,\beta)$ be a bipartition of $k$ 
and $(\lambda,\mu)$ be a bipartition of $n$.  Then we have
\[ E^{(\alpha,\beta)} \otimes \sgn_l \uparrow E^{(\lambda,\mu)}\]
 if and only if $(\lambda,\mu)$ can be obtained by increasing $l$ 
parts of $(\alpha,\beta)$ by $1$.
\end{lem}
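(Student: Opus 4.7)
The plan is to factor the induction through the intermediate parabolic subgroup $W_k\times W_l\subseteq W_n$, where $W_l$ is the Weyl group of type $B_l$ containing $H_l$ as its parabolic subgroup of type $A_{l-1}$. By transitivity of induction,
\[ \Ind_{W_k\times H_l}^{W_n}\bigl(E^{(\alpha,\beta)}\boxtimes\sgn_l\bigr) = \Ind_{W_k\times W_l}^{W_n}\Bigl(E^{(\alpha,\beta)}\boxtimes \Ind_{H_l}^{W_l}(\sgn_l)\Bigr).\]
The proof then splits into two classical computations, both available in \cite{gepf}: first determine the decomposition of $\Ind_{H_l}^{W_l}(\sgn_l)$, then apply the Littlewood--Richardson rule for induction from $W_k\times W_l$ to $W_n$.

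For the first step I use Frobenius reciprocity. Given a bipartition $(\gamma,\delta)$ of $l$, the standard branching rule gives
\[ E^{(\gamma,\delta)}\bigl|_{H_l} = \Ind_{\fS_{|\gamma|}\times \fS_{|\delta|}}^{\fS_l}\bigl(E^\gamma\boxtimes E^\delta\bigr),\]
and $\sgn_l$ restricts to $\sgn_{|\gamma|}\boxtimes \sgn_{|\delta|}$ on $\fS_{|\gamma|}\times \fS_{|\delta|}$. Two applications of Frobenius reciprocity therefore yield
\[ \bigl\langle \Ind_{H_l}^{W_l}(\sgn_l),\,E^{(\gamma,\delta)}\bigr\rangle = \langle\sgn_{|\gamma|},E^\gamma\rangle\,\langle\sgn_{|\delta|},E^\delta\rangle,\]
which equals $1$ precisely when $\gamma=(1^{|\gamma|})$ and $\delta=(1^{|\delta|})$, and $0$ otherwise. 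Hence
\[ \Ind_{H_l}^{W_l}(\sgn_l) = \bigoplus_{j=0}^{l} E^{((1^{l-j}),(1^j))}.\]

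For the second step, the classical Littlewood--Richardson rule for type $B_n$ states that
\[ \bigl\langle \Ind_{W_k\times W_l}^{W_n}\bigl(E^{(\alpha,\beta)}\boxtimes E^{(\gamma,\delta)}\bigr),\, E^{(\lambda,\mu)}\bigr\rangle = c^\lambda_{\alpha\gamma}\,c^\mu_{\beta\delta},\]
where $c$ denotes the usual Littlewood--Richardson coefficient. Substituting $(\gamma,\delta)=((1^{l-j}),(1^j))$ and invoking the ordinary Pieri rule for partitions gives $c^\lambda_{\alpha,(1^{l-j})}=1$ iff $\lambda/\alpha$ is a vertical strip of size $l-j$, that is, $\lambda_i-\alpha_i\in\{0,1\}$ for each $i$; likewise for $c^\mu_{\beta,(1^j)}$. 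Since $j=|\mu|-|\beta|$ is determined by $(\lambda,\mu)$, summing over $j$ yields multiplicity $1$ exactly when $\lambda/\alpha$ and $\mu/\beta$ are both vertical strips with $|\lambda/\alpha|+|\mu/\beta|=l$. This is precisely the stated condition that $(\lambda,\mu)$ is obtained from $(\alpha,\beta)$ by increasing $l$ parts by one, completing the proof. The only real subtlety is lining up the two known rules above with the correct combinatorial formulation; once that is done, the translation between ``vertical strip'' and ``$l$ parts increased by one'' is a direct unravelling of definitions.
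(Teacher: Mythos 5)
Your proof is correct, and it takes essentially the same route as the paper's (which just points to the Littlewood--Richardson rule for $W_n$ in \cite[6.1.9]{gepf} and related material there): you factor the induction through $W_k\times W_l$, compute $\Ind_{H_l}^{W_l}(\sgn_l)$ by Clifford theory and Frobenius reciprocity, and then invoke the type-$B$ LR rule together with the ordinary Pieri rule. The paper leaves these steps to the reference; you have simply written them out.
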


\begin{proof} This can be reduced to a statement about representations 
of the symmetric group, where it corresponds to the classical ``Pieri 
Rule'' for symmetric functions. See \cite[6.1.9]{gepf} and the proof 
of \cite[6.4.7]{gepf} for details.
\end{proof}

In order to be able to apply this rule in our context, we need to 
interpret it in terms of multisets. So let $n=k+l$, $(\alpha,\beta)$, 
$(\lambda,\mu)$ be as above and assume that $E^{(\alpha,\beta)} \otimes 
\sgn_l \uparrow E^{(\lambda,\mu)}$.
Let $N\geq 0$ be a sufficiently large integer and consider the multisets 
$Z_{a,b}^N(\alpha,\beta)$ and $Z_{a,b}^N(\lambda, \mu)$. Let 
\[Z_{a,b}^N(\alpha,\beta)=\{u_1,\ldots,u_{2N+r}\} \qquad \mbox{where}\qquad
u_1 \geq u_2 \geq \ldots \geq u_{2N+r}.\]
Then, by Lemma~\ref{pieri}, we have 
\[Z_{a,b}^N(\lambda,\mu)=\{u_i+\delta_i a\mid 1\leq i \leq 2N+r\}
\qquad \mbox{where} \qquad \delta_i\in\{0,1\};\]
furthermore, the number of $i \in \{1,2,\ldots,2N+r\}$ such that 
$\delta_i=1$ equals~$l$. Note, however, that the entries $u_i+\delta_i a$ 
are not necessarily arranged in decreasing order! We need to know 
precisely to what extent this can fail. 

We define a sequence $(z_i)_{1\leq i \leq 2N+r}$ as follows. For all $i$
such that $\delta_i=0$, $\delta_{i+1}=1$ and $u_{i+1}+a>u_i$, we set $z_i
:=u_{i+1}+a$ and $z_{i+1}:= u_i$. For all the remaining $i$, we set $z_i:=
u_i+\delta_i a$. Thus, we have $Z_{a,b}^N(\lambda,\mu)=\{z_1,\ldots,
z_{2N+r}\}$.

\begin{exmp} \label{expmult2} Let $k=14$ and $(\alpha,\beta)=(4311,32)$. 

(a) Let $l=3$ and $(\lambda,\mu)=(4321,421)$. Assume that $a=b=1$. Then 
$r=1$, $b'=0$ and we can take $N=3$. As in Example~\ref{expmult1}, we obtain 
\begin{align*}
 Z_{1,1}^3(4311,32)&=\{7,\hat{5},5,3,\hat{2},1,\hat{0}\},\\
 Z_{1,1}^3(4321,421)&=\{7,6,5,3,3,1,1\},
\end{align*}
where the hat indicates that $\delta_i=1$. Note that, since $u_2=u_3=5$,
the sequence $(\delta_i)$ is not uniquely determined: we could take either
$\delta_2=0$, $\delta_3=1$ or $\delta_2=1$, $\delta_3=0$. But if we choose 
the second possibility, then $z_i=u_i+\delta_i$ for all $i$, and the 
sequence $(z_1,\ldots,z_7)$ is in decreasing order. This will always be 
the case if $b'=0$, assuming that whenever we have an equality $u_i=
u_{i+1}$, then $\delta_i\geq \delta_{i+1}$. 

(b) Let $l=4$ and $(\lambda,\mu)=(4421,331)$. Assume that $a=2$, $b=1$.
Then $r=0$, $b'=1$ and we can take $N=4$. As in Example~\ref{expmult1}, we
obtain
\begin{align*}
 Z_{2,1}^4(4311,32)&=\{15,12,\hat{11},\;\hat{8},\hat{5},3,\hat{2},0\},\\
 Z_{2,1}^4(4421,331)&=\{15,12,13,10,7,3,4,0\},\end{align*}
where the hat indicates that $\delta_i=1$.  Hence, in this case, we see
that some reordering is required. The only critical indices $i$ such that
$\delta_i=0$, $\delta_{i+1}=1$ and $u_{i+1}+2>u_i$ are $i=2$ and $i=6$.
Thus, we set $z_2:=u_3+2=13$, $z_3:=u_2=12$, $z_6:=u_7+2=4$, $z_7:=u_6=3$ 
and $z_i:=u_i+\delta_i$ for all the remaining~$i$. Then $(z_1,\ldots,z_8)=
(15,13,12,10,7,4,3,0)$ is in decreasing order.
\end{exmp}

The observations in this example are true in general:

\begin{lem} \label{pieri1} In the above setting, we have $z_1\geq z_2
\geq \ldots \geq z_{2N+r}$ and 
\[ \sum_{1\leq i \leq d} z_i\leq \min\{d,l\}\, a+ \sum_{1\leq i \leq d} u_i 
\qquad \mbox{for $1\leq d \leq 2N+r$}.\]
\end{lem}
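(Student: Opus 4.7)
The plan is to begin with two simple observations. First, each swap at a position $j$ merely exchanges the two values $u_j+0\cdot a$ and $u_{j+1}+a$ in the sequence without changing the underlying multiset. Second, two swap positions cannot be adjacent: a swap at $j$ requires $\delta_{j+1}=1$, while a swap at $j+1$ requires $\delta_{j+1}=0$. Hence the swaps act on pairwise disjoint pairs of positions, the sequence $(z_i)$ is well defined, and
\[\{z_1,\ldots,z_{2N+r}\}\;=\;\{u_i+\delta_ia\mid 1\leq i\leq 2N+r\}\;=\;Z_{a,b}^N(\lambda,\mu)\]
as multisets.

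Granting monotonicity, the partial-sum inequality is immediate. Then $\sum_{i\leq d}z_i$ is the sum of the $d$ largest entries of the multiset above, hence equals $\sum_{i\in I^*}(u_i+\delta_ia)$ for some $d$-element index set $I^*$, and
\[\sum_{i\in I^*}(u_i+\delta_ia)\;=\;\sum_{i\in I^*}u_i\,+\,a\,|\{i\in I^*:\delta_i=1\}|\;\leq\;\sum_{i\leq d}u_i\,+\,\min\{d,l\}\,a,\]
since the $d$ largest entries of $u$ sum to $\sum_{i\leq d}u_i$, and at most $\min\{d,l\}$ elements of $I^*$ can satisfy $\delta_i=1$.

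For monotonicity, I fix $i$ and case-distinguish by which of $i-1,i,i+1$ are swap positions; since adjacent swap positions are excluded, only a few configurations arise. The easy cases are: if $i$ is a swap position, $z_i-z_{i+1}=u_{i+1}+a-u_i>0$ is the swap condition itself; if no swap occurs in a neighbourhood of $i$, then $z_i-z_{i+1}=(u_i-u_{i+1})+(\delta_i-\delta_{i+1})a\ge 0$, either because $\delta_i\ge\delta_{i+1}$, or, when $\delta_i=0$ and $\delta_{i+1}=1$, because the failure of the swap condition at $i$ forces $u_i\ge u_{i+1}+a$. The \emph{critical} cases are those in which a swap at $i-1$ or at $i+1$ injects a non-adjacent entry of $u$ into the pair $(z_i,z_{i+1})$, reducing $z_i\ge z_{i+1}$ to one of the ``gap of $a$'' inequalities
\[u_{i-1}\ge u_{i+1}+a,\qquad u_{i-1}\ge u_{i+2}+a,\qquad\text{or}\qquad u_i\ge u_{i+2}+a,\]
according to the configuration.

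The main obstacle is to derive each such gap inequality from conditions (M2) and (M3) on the multiset $u=Z_{a,b}^N(\alpha,\beta)$. If $b'=0$, all entries of $u$ lie in $\mathbb{Z}a$ and the swap condition $u_{j+1}+a>u_j$ collapses to $u_j=u_{j+1}$; together with (M2) (no entry appears more than twice), each gap inequality reduces to excluding a run of three or four equal consecutive entries, and is therefore forced. If $b'>0$, all entries of $u$ are distinct and each lies in $\mathbb{Z}a$ or in $b'+\mathbb{Z}a$; a positive difference strictly less than $a$ is then forced to equal either $b'$ or $a-b'$ with a prescribed coset pattern on the two entries involved. A short case analysis on the cosets of the three or four relevant $u$-values shows that the ``free'' middle differences in the telescoping expansion of $u_{i-1}-u_{i+1}$, $u_{i-1}-u_{i+2}$ or $u_i-u_{i+2}$ are always at least $b'$, $a-b'$ or $a$, so the total telescope always adds up to at least $a$. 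Each individual subcase is an elementary arithmetic check; assembling them is the main bookkeeping burden.
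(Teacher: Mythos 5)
Your proof is correct, and the partial-sum step is argued by a genuinely different (and cleaner) route than the paper's. For the monotonicity claim your strategy is essentially the paper's: case-distinguish by which neighbouring positions are swaps, and reduce each non-trivial configuration to a gap inequality $u_j\geq u_{j'}+a$ with $j'-j\in\{2,3\}$. The paper dispatches all of these at once via the single observation that $u_i-u_{i+2}\geq a$ for every $i$, which it reads off directly from (M2)--(M3): a one-line pigeonhole on the two residue classes $\Z a$ and $b'+\Z a$ when $b'>0$, or the no-triple-repeat condition when $b'=0$. Your telescoping on coset patterns in the $b'>0$ case encodes the same fact with more bookkeeping, and the $3$-apart inequality $u_{i-1}\geq u_{i+2}+a$ is in any case a consequence of $u_{i-1}\geq u_{i+1}+a$ together with $u_{i+1}\geq u_{i+2}$, so you really only need the $2$-apart inequality. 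For the partial-sum bound, however, your argument improves on the paper's: once $(z_i)$ is known to be the decreasing rearrangement of the multiset $\{u_i+\delta_ia\}$, the partial sum $\sum_{i\leq d}z_i$ is the sum of the $d$ largest entries of that multiset, which is at most $\sum_{i\leq d}u_i+\min\{d,l\}\,a$ by the elementary rearrangement-and-counting bound you state (any $d$-element subsum of the $u_i$ is at most $\sum_{i\leq d}u_i$, and any $d$-element index set meets $\{\delta_i=1\}$ in at most $\min\{d,l\}$ indices). The paper instead works with the explicit $z$-construction and must split into two cases according to whether $d$ is a swap index; your version avoids that dichotomy entirely and is more conceptual.
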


\begin{proof} Assume that $1\leq i \leq 2N+r-1$. We want to show that $z_i\geq
z_{i+1}$.
From the construction, we see that we have the following possibilities:
\[ z_i \in \{u_i,u_{i-1},u_i+a,u_{i+1}+a\}\quad \mbox{and}\quad
 z_{i+1} \in \{u_{i+1},u_i,u_{i+1}+a,u_{i+2}+a\},\]
where the combination $(z_i,z_{i+1})=(u_i,u_{i+1}+a)$ only occurs if
$\delta_i=0$, $\delta_{i+1}=1$, $u_i \geq u_{i+1}+a$ and the combination 
$(z_i,z_{i+1})=(u_{i+1}+a,u_i)$ only occurs if $\delta_i=0$, 
$\delta_{i+1}=1$,
$u_{i+1}+a>u_i$. Further note that, since $Z_{a,b}^N(\alpha,\beta)$ 
satisfies the conditions (M1)--(M3) in Section~\ref{sec1}, we have $u_i -
u_{i+2} \geq a$ for all $i$. Hence, for all valid combinations of pairs
$(z_i,z_{i+1})$ as above, we have $z_i \geq z_{i+1}$, as claimed.

Now consider the sum $\sum_{1\leq i \leq d} z_i$ and note that
if $d\geq l$, then the desired inequality holds. Assume now that 
$d<l$ and let $1\leq i_1<i_2<
\ldots <i_h\leq 2N+r-1$ be the critical indices $i$ such that 
$\delta_i=0$, $\delta_{i+1}=1$ and $u_{i+1}+a>u_i$. Note that $i_{j+1}-i_j
\geq 2$ for all $j$. Now we have two cases: 

If $d\not\in\{i_1,i_2,\ldots,i_h\}$, then $(z_1, \ldots,z_d)$ will 
be equal, up to possibly interchanging consecutive indices, to 
$(u_1+\delta_1a, \ldots,u_d+\delta_da)$. Hence, if we sum over these
two sequences, we get the same result and so
\[ \sum_{1\leq i \leq d} z_i=\Bigl(\sum_{1\leq i\leq d} \delta_i\Bigr)a+
\sum_{1\leq i \leq d} u_i\leq \min\{d,l\}\,a +\sum_{1\leq i \leq d}u_i.\]
On the other hand, if $d=i_j$ for some $j$, then $d-1\not\in\{i_1,i_2,
\ldots,i_h\}$ and so the previous case applies to the sum $\sum_{1\leq i 
\leq d-1} z_i$. Hence, we have
\[ \sum_{1\leq i \leq d} z_i=z_d+\sum_{1\leq i \leq d-1} z_i
\leq z_d+\min\{d-1,l\}\,a+\sum_{1\leq i \leq d-1} u_i.\]
Now $z_d=u_{d+1}+a\leq u_d+a$ and so we obtain again the desired estimation. 
\end{proof}

%Now we distinguish four cases.
%\begin{itemize}
%\item[(1)] Assume that $\delta_i=0$ and $\delta_{i+1}=1$. If $u_{i+1}+a>
%u_i$, then $z_i=u_{i+1}+a$, $z_{i+1}=u_i$ by the above construction and 
%so $z_i\geq z_{i+1}$. If $u_{i+1}+a \leq u_i$, then $z_i=u_i$, $z_{i+1}=
%u_{i+1}+a$ and so $z_i \geq z_{i+1}$.
%\item[(2)] Assume that $\delta_i=\delta_{i+1}=0$. Then $z_i=u_i$.
%Furthermore, $z_{i+1}=u_{i+1}$ or $z_{i+1}=u_{i+2}+a$, where the latter
%possibility arises if $i+1,i+2$ satisfy the condition in (1). Hence,
%using ($*$), we see that $z_i \geq z_{i+1}$.
%\item[(3)] Assume that $\delta_i=\delta_{i+1}=1$. Then $z_{i+1}=u_{i+1}$.
%Furthermore, $z_i=u_i$ or $z_i=u_{i-1}$, where the latter possibility 
%arises if $i-1,i$ satisfy the condition in (1). Hence, we certainly have
%$z_i \geq z_{i+1}$.
%\item[(4)] Assume that $\delta_i=1$ and $\delta_{i+1}=0$. Then
%$z_i=u_i$ or $z_i=u_{i-1}$, and $z_{i+1}=u_{i+1}$ or $z_{i+1}=u_{i+2}+a$.
%In all cases, $z_i \geq z_{i+1}$ as required.
%\end{itemize}

\begin{lem} \label{pieri2} Let $n=k+l$ where $k \geq 0$, $l \geq 1$. 
Let $(\alpha,\beta)$ be a bipartition of $k$ and $(\lambda,\mu)$ be a 
bipartition of $n$. Let $N\geq 0$ be a sufficiently large integer and 
consider the multisets $Z_{a,b}^N(\alpha,\beta)$ and $Z_{a,b}^N(\lambda,
\mu)$. Then the following implication holds:
\[ E^{(\alpha,\beta)} \otimes \sgn_l \uparrow E^{(\lambda,\mu)} \qquad
\Rightarrow \qquad Z_{a,b}^N(\lambda,\mu) \trianglelefteq \hat{Z}_{a,b}^N
(\alpha,\beta),\]
where $\hat{Z}_{a,b}^N(\alpha,\beta)\in \cM_{a,b;n}^N$ is the multiset
obtained by increasing the largest $l$ entries of $Z_{a,b}^N(\alpha,\beta)$ 
by~$a$.
\end{lem}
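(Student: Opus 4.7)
The plan is to unwind the induction hypothesis via Pieri's rule and translate it onto the level of multisets of symbols, then apply the partial-sum estimate already isolated in Lemma~\ref{pieri1}, and finally compare with the explicit partial sums of $\hat{Z}_{a,b}^N(\alpha,\beta)$. By Lemma~\ref{pieri}, the assumption $E^{(\alpha,\beta)}\otimes\sgn_l \uparrow E^{(\lambda,\mu)}$ is equivalent to saying that $(\lambda,\mu)$ is obtained by increasing $l$ parts of $(\alpha,\beta)$ by $1$. In the language of symbols, this is precisely the description given in the paragraph immediately preceding Lemma~\ref{pieri1}: if $u_1\geq u_2\geq\ldots\geq u_{2N+r}$ are the entries of $Z_{a,b}^N(\alpha,\beta)$ in decreasing order, then $Z_{a,b}^N(\lambda,\mu)=\{u_i+\delta_i a\mid 1\leq i\leq 2N+r\}$ for some $\delta_i\in\{0,1\}$ with $\sum_i\delta_i=l$.

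Next, after reordering these values as $z_1\geq z_2\geq\ldots\geq z_{2N+r}$, Lemma~\ref{pieri1} gives exactly the estimate
\[\sum_{1\leq i\leq d} z_i \;\leq\; \min\{d,l\}\,a+\sum_{1\leq i\leq d} u_i \qquad \text{for every $d\in\{1,\ldots,2N+r\}$.}\]
This is the analytic core of the argument and requires no further work, since Lemma~\ref{pieri1} has been established precisely in the form we need.

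Now I would compute the partial sums of $\hat{Z}_{a,b}^N(\alpha,\beta)$ directly. Since $u_l+a>u_l\geq u_{l+1}$, the sorted entries of $\hat{Z}_{a,b}^N(\alpha,\beta)$ are $u_1+a,\ldots,u_l+a,u_{l+1},\ldots,u_{2N+r}$, and so the partial sums evaluate to $\min\{d,l\}\,a+\sum_{1\leq i\leq d}u_i$. Combined with the previous step, this yields $\sum_{1\leq i\leq d} z_i\leq \sum_{1\leq i\leq d}\hat{z}_i$ for all $d$, with equality at $d=2N+r$ since both total sums equal $na+N^2a+N(b-a)+\binom{r}{2}a+rb'$ by (M1) applied to the bipartition $(\lambda,\mu)$ of $n$ on the one hand and to $(\alpha,\beta)$ of $k$ augmented by $la$ on the other. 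This is precisely $Z_{a,b}^N(\lambda,\mu)\trianglelefteq \hat{Z}_{a,b}^N(\alpha,\beta)$ in the sense of Remark~\ref{rem11}.

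The only minor technical point outside this chain is to verify that $\hat{Z}_{a,b}^N(\alpha,\beta)$ really lies in $\cM_{a,b;n}^N$, so that the relation $\trianglelefteq$ applies in a meaningful way. Condition (M1) is immediate from the computation of the total sum above; for (M2) and (M3) the residues modulo $a$ are preserved under the shift, so only the multiplicity constraints need attention. Since the top $l$ entries are shifted by $a$ while the entries from position $l+1$ onward are untouched, and since $u_{l+1}\leq u_l<u_l+a\leq u_i+a$ for $i\leq l$, no new value in the top block coincides with any value in the untouched block; within each block the multiplicities of values are simply inherited from $Z_{a,b}^N(\alpha,\beta)$, so no triples appear and, when $b'>0$, distinctness is preserved. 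I do not anticipate any real obstacle here: the proof is essentially a bookkeeping exercise funneling everything through Lemma~\ref{pieri1}.
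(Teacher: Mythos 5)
Your proof is correct and takes essentially the same route as the paper: both reduce the dominance inequality to the partial-sum estimate of Lemma~\ref{pieri1} and compare directly with the partial sums of $\hat{Z}_{a,b}^N(\alpha,\beta)$, which equal $\min\{d,l\}\,a+\sum_{1\leq i\leq d}u_i$. The only addition you make is the explicit verification that $\hat{Z}_{a,b}^N(\alpha,\beta)$ satisfies (M1)--(M3); the paper asserts this membership in the statement without proof, so your check is a harmless bonus rather than a genuinely different approach.
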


\begin{proof} Write $Z_{a,b}^N(\alpha,\beta)=\{u_1,\ldots,u_{2N+r}\}$ 
and $Z_{a,b}^N(\lambda,\mu)=\{z_1,\ldots,z_{2N+r}\}$ as above, where the 
entries in both multisets are in decreasing order. We have 
\[\hat{Z}_{a,b}^N(\alpha,\beta)=\{\hat{u}_1,\ldots,\hat{u}_{2N+r}\}
\quad \mbox{where} \quad \hat{u}_i=\left\{\begin{array}{cl} u_i+a & 
\mbox{ for $1\leq i\leq l$},\\u_i &\mbox{ for $i>l$}.\end{array}\right.\]
Hence, using the estimation in Lemma~\ref{pieri1}, we obtain
\[ \sum_{1\leq i \leq d} z_i \leq \min\{d,l\}a+\sum_{1\leq i \leq d} 
u_i= \sum_{1\leq i \leq d} \hat{u}_i\]
for $1\leq d \leq 2N+r$. This means that $Z_{a,b}^N(\lambda,\mu) 
\trianglelefteq \hat{Z}_{a,b}^N(\alpha,\beta)$, as required.
\end{proof}

\begin{lem}[Lusztig \protect{\cite[22.17]{Lusztig03}}] \label{pieri3} 
In the setting of Lemma~\ref{pieri2}, we have 
\[ E^{(\alpha,\beta)} \otimes \sgn_l \rightsquigarrow_L 
E^{(\lambda,\mu)}\qquad \Rightarrow \qquad Z_{a,b}^N(\lambda,\mu)=
\hat{Z}_{a,b}^N(\alpha,\beta).\]
\end{lem}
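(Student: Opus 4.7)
The plan is to combine Lemma~\ref{pieri2} (which already gives us $Z_{a,b}^N(\lambda,\mu) \trianglelefteq \hat{Z}_{a,b}^N(\alpha,\beta)$) with the strict monotonicity of the $\ba$-invariant stated in Remark~\ref{lem12}(ii). The condition $E^{(\alpha,\beta)} \otimes \sgn_l \rightsquigarrow_L E^{(\lambda,\mu)}$ adds the information that $\ba_{E^{(\alpha,\beta)} \otimes \sgn_l} = \ba_{E^{(\lambda,\mu)}}$, and the whole task reduces to turning this equality of $\ba$-invariants on the representation side into the equality $\ba_{a,b}\bigl(\hat{Z}_{a,b}^N(\alpha,\beta)\bigr) = \ba_{a,b}\bigl(Z_{a,b}^N(\lambda,\mu)\bigr)$ on the multiset side, from which Remark~\ref{lem12}(ii) yields the desired equality of multisets.

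First I would invoke Remark~\ref{ordprod} together with Proposition~\ref{bnafunc} to rewrite
\[
\ba_{E^{(\alpha,\beta)} \otimes \sgn_l} = \ba_{E^{(\alpha,\beta)}} + \ba_{\sgn_l} = \ba_{a,b}(\alpha,\beta) + \tbinom{l}{2}\,a,
\]
where the second equality uses Example~\ref{ordA} applied to the partition $(1^l)$ of $l$, giving $\ba_{\sgn_l} = n((1^l))\,a = \binom{l}{2}a$. Similarly, $\ba_{E^{(\lambda,\mu)}} = \ba_{a,b}(\lambda,\mu) = \ba_{a,b}\bigl(Z_{a,b}^N(\lambda,\mu)\bigr)$.

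Next I would compute $\ba_{a,b}\bigl(\hat{Z}_{a,b}^N(\alpha,\beta)\bigr)$ directly from Definition~\ref{def2}. Writing the entries of $Z_{a,b}^N(\alpha,\beta)$ in decreasing order as $u_1 \geq u_2 \geq \cdots \geq u_{2N+r}$, the multiset $\hat{Z}_{a,b}^N(\alpha,\beta)$ has entries $u_i + a$ for $1 \leq i \leq l$ and $u_i$ for $i > l$; since $u_l + a \geq u_l \geq u_{l+1}$, these are still in decreasing order. Using that the reference multiset $Z^0$ is the same for $Z$ and $\hat{Z}$, one finds
\[
\ba_{a,b}\bigl(\hat{Z}_{a,b}^N(\alpha,\beta)\bigr) - \ba_{a,b}\bigl(Z_{a,b}^N(\alpha,\beta)\bigr) = a\sum_{i=1}^{l}(i-1) = \tbinom{l}{2}\,a.
\]
Combining this identity with the preceding formula for $\ba_{E^{(\alpha,\beta)} \otimes \sgn_l}$ gives $\ba_{a,b}\bigl(\hat{Z}_{a,b}^N(\alpha,\beta)\bigr) = \ba_{E^{(\alpha,\beta)} \otimes \sgn_l}$.

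Finally, the hypothesis $\ba_{E^{(\alpha,\beta)} \otimes \sgn_l} = \ba_{E^{(\lambda,\mu)}}$ becomes $\ba_{a,b}\bigl(\hat{Z}_{a,b}^N(\alpha,\beta)\bigr) = \ba_{a,b}\bigl(Z_{a,b}^N(\lambda,\mu)\bigr)$. Since Lemma~\ref{pieri2} supplies both the inclusion $\hat{Z}_{a,b}^N(\alpha,\beta) \in \cM_{a,b;n}^N$ and the dominance $Z_{a,b}^N(\lambda,\mu) \trianglelefteq \hat{Z}_{a,b}^N(\alpha,\beta)$, the equality clause in Remark~\ref{lem12}(ii) forces $Z_{a,b}^N(\lambda,\mu) = \hat{Z}_{a,b}^N(\alpha,\beta)$, as required. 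No step is genuinely difficult here: the only thing to verify carefully is that $\hat{Z}$ really is obtained by raising the top $l$ entries (so that the order is preserved and the difference of sums $\sum (i-1)z_i$ is exactly $\binom{l}{2}a$) — everything else is an assembly of previously established facts.
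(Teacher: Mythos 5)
Your proposal is correct and follows essentially the same route as the paper's own proof: compute the shift $\ba_{a,b}\bigl(\hat{Z}_{a,b}^N(\alpha,\beta)\bigr)-\ba_{a,b}\bigl(Z_{a,b}^N(\alpha,\beta)\bigr)=\binom{l}{2}a$ from Definition~\ref{def2}, match it with $\ba_{E^{(\alpha,\beta)}\otimes\sgn_l}$ via Remark~\ref{ordprod}, Proposition~\ref{bnafunc} and Example~\ref{ordA}, and then conclude from the dominance in Lemma~\ref{pieri2} together with the strictness in Remark~\ref{lem12}(ii). No meaningful differences.
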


\begin{proof} Let us write $\hat{Z}_{a,b}^N(\alpha,\beta)=\{\hat{u}_1,
\ldots, \hat{u}_{2N+r}\}$ as in the above proof. Then 
\begin{align*}
\sum_{1\leq i\leq 2N+r} (i-1) \hat{u}_i&=\sum_{1\leq i\leq l}(i-1)a+
\sum_{1\leq i \leq 2N+r} (i-1)u_i\\ &= \binom{l}{2}a+
\sum_{1\leq i \leq 2N+r} (i-1)u_i.
\end{align*}
Thus, we also have 
\[ \ba_{a,b}\bigl(\hat{Z}_{a,b}^N(\alpha,\beta)\bigr)
=\binom{l}{2}a+\ba_{a,b}\bigl(Z_{a,b}^N(\alpha,\beta)\bigr).\]
By Remark~\ref{ordprod}, Proposition~\ref{bnafunc} and Example~\ref{ordA}, 
the right hand side equals $\ba_{E^{(\alpha,\beta)} \otimes \sgn_l}$. 
Hence, by Lemma~\ref{pieri2}, the assumption implies that 
\[Z_{a,b}^N(\lambda,\mu) \trianglelefteq \hat{Z}_{a,b}(\alpha,\beta)
\qquad \mbox{and}\qquad \ba_{a,b}\bigl(Z_{a,b}^N(\lambda,\mu)\bigr)=
\ba_{a,b}\bigl(\hat{Z}_{a,b}^N(\alpha,\beta)\bigr).\] 
So Remark~\ref{lem12} shows that $Z_{a,b}^N(\lambda,\mu)=\hat{Z}_{a,b}^N
(\alpha,\beta)$, as desired.
\end{proof}

Now we can state the main result of this section.

\begin{thm} \label{mainbn} Recall that $b \geq 0$ and $a>0$. Let
$(\lambda,\mu)$ and $(\lambda',\mu')$ be bipartitions of $n$. Then 
we have:
\[ E^{(\lambda,\mu)} \preceq_L E^{(\lambda',\mu')} \quad \Rightarrow \quad
(\lambda,\mu) \preceq_{a,b} (\lambda',\mu') \qquad \mbox{(see 
Definition~\ref{def1})}.\] 
\end{thm}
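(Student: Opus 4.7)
The plan is to argue by induction on $n$, the base case $n=0$ (where $W_0=\{1\}$ has only the trivial representation) being vacuous. For the inductive step, assume $E^{(\lambda,\mu)}\preceq_L E^{(\lambda',\mu')}$. By Lemma~\ref{ordB1} we may replace the defining chain by one whose elementary steps all go through a maximal parabolic $W_k\times H_l\subseteq W_n$ (with $n=k+l$ and $l\geq 1$), carrying the sign representation on the $H_l$-factor. Since $\preceq_{a,b}$ is transitive (it is defined via the dominance order on multisets), it is enough to verify the implication across a single such step.

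Fix one step relating $E_{i-1}=E^{(\lambda^{(0)},\mu^{(0)})}$ to $E_i=E^{(\lambda^{(1)},\mu^{(1)})}$ via characters $M=E^{(\alpha,\beta)}$ and $M'=E^{(\alpha',\beta')}$ of $W_k$ with $M\preceq_L M'$. The inductive hypothesis applied to $W_k$ yields $(\alpha,\beta)\preceq_{a,b}(\alpha',\beta')$, i.e.\ $Z_{a,b}^N(\alpha,\beta)\trianglelefteq Z_{a,b}^N(\alpha',\beta')$ for $N$ large. In the first alternative of Lemma~\ref{ordB1}, Lemma~\ref{pieri2} gives $Z_{a,b}^N(\lambda^{(0)},\mu^{(0)})\trianglelefteq \hat{Z}_{a,b}^N(\alpha,\beta)$, while Lemma~\ref{pieri3} gives $Z_{a,b}^N(\lambda^{(1)},\mu^{(1)})=\hat{Z}_{a,b}^N(\alpha',\beta')$. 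A direct partial-sum comparison shows that the hat operation, which adds $a$ to the $l$ largest entries, is monotone with respect to $\trianglelefteq$; hence $\hat{Z}_{a,b}^N(\alpha,\beta)\trianglelefteq \hat{Z}_{a,b}^N(\alpha',\beta')$. Stringing these relations together yields $Z_{a,b}^N(\lambda^{(0)},\mu^{(0)})\trianglelefteq Z_{a,b}^N(\lambda^{(1)},\mu^{(1)})$, as required.

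In the second alternative of Lemma~\ref{ordB1}, we run the very same argument with $E_{i-1}\otimes\sgn$ and $E_i\otimes\sgn$ in place of $E_{i-1}$ and $E_i$. By Lemma~\ref{bnsign} these equal $E^{(\overline{\mu^{(0)}},\overline{\lambda^{(0)}})}$ and $E^{(\overline{\mu^{(1)}},\overline{\lambda^{(1)}})}$ respectively, so the previous paragraph delivers $(\overline{\mu^{(1)}},\overline{\lambda^{(1)}})\preceq_{a,b}(\overline{\mu^{(0)}},\overline{\lambda^{(0)}})$. Proposition~\ref{cor11} then converts this back to $(\lambda^{(0)},\mu^{(0)})\preceq_{a,b}(\lambda^{(1)},\mu^{(1)})$, completing the inductive step.

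The heavy lifting has already been supplied by Lemmas~\ref{pieri2}--\ref{pieri3} (which translate Pieri-type induction into a statement about multisets) and by Proposition~\ref{cor11} (conjugation-invariance of $\preceq_{a,b}$, itself resting on the delicate Lemma~\ref{lem11}); the only genuinely new verification is the monotonicity of the hat operation, which is a routine bookkeeping check. The main obstacle in the overall argument is really the asymmetry of the second alternative of Lemma~\ref{ordB1}, where one must tensor with the sign representation before comparing. This is precisely what forces the use of Proposition~\ref{cor11} and is the reason the paper devotes Sections~\ref{secproof1} and \ref{secproof2} to establishing Lemma~\ref{lem11}.
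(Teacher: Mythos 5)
Your argument is correct and is essentially identical to the paper's own proof of Theorem~\ref{mainbn}: both proceed by induction on $n$, reduce to a single elementary step via Lemma~\ref{ordB1}, handle the first alternative with Lemmas~\ref{pieri2} and \ref{pieri3} together with the (immediate) monotonicity of the hat operation, and handle the second alternative by passing through $\otimes\,\sgn$ via Lemma~\ref{bnsign} and then invoking Proposition~\ref{cor11}. Your closing remark correctly identifies Proposition~\ref{cor11} (resting on Lemma~\ref{lem11}) as the real load-bearing ingredient.
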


\begin{proof} We proceed by induction on $n$. If $n=1$, then $(1,
\varnothing)$ and $(\varnothing,1)$ are the only bipartitions of $n$
and the assertion is easily checked directly. Now let $n \geq 2$. We use 
the characterisation of $\preceq_L$ in Lemma~\ref{ordB1}. It is sufficient 
to consider an elementary step in that characterisation, that is, we can 
assume that there exists a decomposition $n=k+l$ ($k\geq 0$, $l\geq 1$) 
and $M,M'\in \Irr(W_k)$ where $M\preceq_L M'$ within $\Irr(W_k)$, such 
that one of the following conditions is satisfied:
\begin{itemize}
\item[(I)] $M \boxtimes \sgn_{l} \uparrow E^{(\lambda,\mu)}$ and
$M'\boxtimes \sgn_{l} \rightsquigarrow_L E^{(\lambda',\mu')}$.
\item[(II)] $M \boxtimes \sgn_{l}\uparrow E^{(\lambda',\mu')} 
\otimes \sgn$ and $M' \boxtimes \sgn_{l} \rightsquigarrow_L 
E^{(\lambda,\mu)}\otimes \sgn$.
\end{itemize}
Now write $M=E^{(\alpha,\beta)}$ and $M'=E^{(\alpha',\beta')}$ where 
$(\alpha,\beta)$ and $(\alpha',\beta')$ are bipartitions of $k$. 
Let $N\geq 0$ be a sufficiently large integer and consider the
multisets corresponding to the above partitions.  We also consider
the multisets $\hat{Z}_{a,b}^N(\alpha,\beta)$ and $\hat{Z}_{a,b}^N
(\alpha',\beta')$, as defined in Lemma~\ref{pieri2}. 

Since $M\preceq_L M'$, we have $(\alpha,\beta) \preceq_{a,b} (\alpha',
\beta')$ by induction. Recall that this means that $Z_{a,b}^N(\alpha,
\beta) \trianglelefteq Z_{a,b}^N(\alpha',\beta')$. By the definition of 
$\trianglelefteq$, this immediately implies that we also have 
\[ \hat{Z}_{a,b}^N(\alpha,\beta) \trianglelefteq \hat{Z}_{a,b}^N(\alpha',
\beta').\]
Now assume that (I) holds. Then, by Lemmas~\ref{pieri2} and \ref{pieri3},
we deduce that 
\[ Z_{a,b}^N(\lambda,\mu) \trianglelefteq \hat{Z}_{a,b}^N(\alpha,\beta)
\trianglelefteq \hat{Z}_{a,b}^N(\alpha',\beta')=Z_{a,b}^N(\lambda',\mu')\]
and, hence, $(\lambda,\mu) \preceq_{a,b} (\lambda',\mu')$, as required.

Now assume that (II) holds. By Lemma~\ref{bnsign}, we have
\[E^{(\lambda,\mu)} \otimes \sgn=E^{(\overline{\mu}, \overline{\lambda})}
\qquad \mbox{and}\qquad E^{(\lambda',\mu')} \otimes \sgn=
E^{(\overline{\mu}',\overline{\lambda}')}.\]
Arguing as in case (I), we obtain that $(\overline{\mu}',
\overline{\lambda}') \preceq_{a,b} (\overline{\mu}, \overline{\lambda})$.
But then Proposition~\ref{cor11} implies that we also have $(\lambda,\mu)
\preceq_{a,b} (\lambda',\mu')$, as required.
\end{proof}

\begin{rem} \label{rembn} One is tempted to conjecture that the reverse 
implication in Theorem~\ref{mainbn} also holds. In the cases where 
$(a,b)\in \{(1,1), (1,0)\}$ or $b>(n-1)a>0$, this will be shown in 
Section~\ref{sec3a} below. 

However, the reverse implication in Theorem~\ref{mainbn} does not hold in
general. The following example was found by Bonnaf\'e in connection 
with a somewhat related conjecture in \cite[Remark~1.2]{bgil}. Let $n=5$,
$b=1$ and $a=2$. Then one can check that $(\varnothing,221) \preceq_{2,1} 
(32,\varnothing)$ but $E^{(\varnothing,221)}$ and $E^{(32,\varnothing)}$ 
are not related by $\preceq_L$.

In any case, the implication in Theorem~\ref{mainbn} is sufficient
to obtain all our applications in Section~\ref{sec4}.
\end{rem}

\begin{exmp} \label{bnanull} Assume that $a=0$ and $b>0$. (See
Example~\ref{expordtriv} for the case $a=b=0$.) Then, by 
\cite[Example~1.3.9]{GeNico}, we have 
\[ \ba_{E^{(\lambda,\mu)}}=b|\mu| \quad \mbox{for all bipartitions
$(\lambda,\mu)$ of $n$}.\]
Now let $(\lambda,\mu)$ and $(\lambda',\mu')$ be two bipartitions of $n$.
Using similar methods, it is not difficult to show that 
\[ E^{(\lambda,\mu)} \preceq_L E^{(\lambda',\mu')} \qquad 
\Leftrightarrow \qquad |\mu|\geq |\mu'|;\]
furthermore, $E^{(\lambda,\mu)}$ and $E^{(\lambda',\mu')}$ belong to 
the same family if and only if $|\mu|=|\mu'|$. (As this case is not
very interesting for applications, we omit further details; see also 
\cite[Cor.~2.4.12]{GeNico}.)
\end{exmp}

%%%%%%%%%%%%%%%%%%%%%%%%%%%%%%%%%%%%%%%%%%%%%%%%%%%%%%%%%%%%%%%%%%%%%%%%%%%
\section{Examples} \label{sec3a}

We keep the notation of the previous section where $W_n$ is a Coxeter group 
of type $B_n$ and $L \colon W_n \rightarrow \Z$ is a weight function 
specified by $b:=L(t) \geq 0$ and $a:=L(s_i)>0$ for $1 \leq i \leq n-1$,
where $\{t,s_1,\ldots,s_{n-1}\}$ are the generators of $W_n$. In this 
section, we discuss some examples and further interpretations of the 
pre-order relation $\preceq_L$ on $\Irr(W_n)$, involving some interesting
combinatorics. We begin by considering the ``asymptotic case''.

\begin{exmp} \label{expasym2} Assume that $b>(n-1)a>0$, as in 
Section~\ref{secasym}. Then we claim that:
\begin{center}
\fbox{$E^{(\lambda,\mu)} \preceq_L E^{(\lambda',\mu')} \quad 
\Leftrightarrow \quad E^{(\lambda,\mu)} \leq_{\cLR} E^{(\lambda',\mu')} 
\quad \Leftrightarrow \quad (\lambda,\mu) \trianglelefteq (\lambda',\mu')$}
\end{center}
where $\trianglelefteq$ is the dominance order on bipartitions; see
Example~\ref{defdouble}. In particular, the reverse implication in 
Remark~\ref{remLR} holds. (This is a new result.)

The above equivalences are proved as follows. By Remark~\ref{remLR}, the 
first condition implies the second. As already noted in 
\cite[Example~3.7]{klord}, the second condition implies the third, as a 
consequence of \cite[Prop.~5.4]{geia06}. So it remains to prove the 
implication 
\begin{equation*}
(\lambda,\mu) \trianglelefteq (\lambda',\mu') \qquad \Rightarrow \qquad
E^{(\lambda,\mu)} \preceq_L E^{(\lambda',\mu')}.\tag{$*$}
\end{equation*}
We begin by noting that, since $b>(n-1)a$, the following simplified
version of Lemma~\ref{pieri3} holds. Let $n=k+l$ where $k \geq 0$, 
$l \geq 1$. Let $(\alpha,\beta)$ be a bipartition of $k$ and define
$\hat{\alpha}$ to be the partition of $|\alpha|+l$ obtained by increasing 
the largest $l$ parts of $\alpha$ by~$1$. Then we have 
\begin{equation*}
E^{(\alpha,\beta)} \otimes \sgn_l \rightsquigarrow_L E^{(\hat{\alpha},
\beta)}. \tag{$\dagger$}
\end{equation*}
This has already been noted in \cite[Prop.~5.2]{my02}. Indeed, by 
Lemma~\ref{pieri}, we certainly have $E^{(\alpha,\beta)} \otimes 
\sgn_l \uparrow E^{(\hat{\alpha},\beta)}$. It remains to use a 
simplified formula for the $\ba$-function, which can now be written 
as follows (see \cite[Example~3.6]{geia06}):
\[ \ba_{E^{(\lambda,\mu)}}=\bigl(n(\lambda)+2n(\mu)-n(\overline{\mu})
\bigr) \,a+|\mu|\,b.\]
After these preparations, we can now turn to the proof of ($*$). It 
will certainly be sufficient to assume that $(\lambda,\mu)$ is adjacent
to $(\lambda',\mu')$ in the dominance order. Also note that, if 
$(\lambda,\mu) \trianglelefteq (\lambda',\mu')$, then $|\lambda|\leq 
|\lambda'|$. According to Remark~\ref{remadjdouble}, this leads us 
to distinguish three cases.

{\em Case 1}. We have $\mu=\mu'$ and $\lambda$ is obtained from 
$\lambda'$ by decreasing one part by $1$ and increasing one part 
by $1$. More precisely, there are indices $1 \leq l<j\leq N$ such that,
if we write $\lambda'=(\lambda_1'\geq \lambda_2'\geq \ldots \geq\lambda_N')$,
then 
\[\lambda=(\lambda_1' \geq \ldots \lambda_{l-1}' \geq \lambda_l'-1 \geq 
\lambda_{l+1}' \geq \ldots \geq \lambda_{j-1}' \geq \lambda_j'+1 \geq 
\lambda_{j+1}'\geq \ldots \geq \lambda_N').\]
Let $\nu$ be the partition obtained by decreasing the first $l$ parts 
of $\lambda'$ by $1$. Then notice that $\lambda$ can be obtained by 
increasing $l$ parts of $\nu$ by~$1$. Now consider the representation 
$E^{(\nu,\mu)}$ of $W_{k}$ where $k=n-l$. By Lemma~\ref{pieri} and 
($\dagger$), we have
\[ E^{(\nu,\mu)} \otimes \sgn_{l} \uparrow E^{(\lambda,\mu)} \qquad
\mbox{and} \qquad E^{(\nu,\mu)} \otimes \sgn_l \rightsquigarrow_L 
E^{(\lambda',\mu)}.\]
This means that $E^{(\lambda,\mu)} \preceq_L E^{(\lambda',\mu)}$, as 
required. 

{\em Case 2}. We have $\lambda=\lambda'$ and $\mu$ is obtained from 
$\mu'$ by increasing one part by $1$ and decreasing one part by $1$. 
Then the bipartitions $(\overline{\mu}',\overline{\lambda}')$ and
$(\overline{\mu},\overline{\lambda})$ are related as in Case~1. So 
we can conclude that 
\[E^{(\lambda',\mu')} \otimes \sgn=E^{(\overline{\mu}',
\overline{\lambda}')}\preceq_L E^{(\overline{\mu},\overline{\lambda})}=
E^{(\lambda,\mu)} \otimes \sgn,\]
where we also used Lemma~\ref{bnsign}. By the definition of $\preceq_L$, 
it is then clear that $E^{(\lambda,\mu)} \preceq_L E^{(\lambda',\mu')}$ 
as required.

{\em Case 3}. We have $|\lambda|<|\lambda'|$ and $(\lambda,\mu)$ is 
obtained from $(\lambda',\mu')$ by decreasing one part of $\lambda'$ 
by~$1$ and increasing one part of $\mu'$ by~$1$. The same statement then 
also holds, of course, for $(\overline{\mu},\overline{\lambda})$ and 
$(\overline{\mu}',\overline{\lambda}')$. In particular, $\overline{\mu}$ 
is not the empty partition. Let $l$ be the number of (non-zero) parts of 
$\overline{\mu}$ and let $\nu$ be the partition obtained by decreasing 
all (non-zero) parts of $\overline{\mu}$ by $1$. Then notice that 
$\overline{\mu}'$ can be obtained by increasing some parts of $\nu$ by~$1$. 
Now consider the representation $E^{(\nu,\overline{\lambda})}$ of $W_k$ 
where $k=n-l$. By Lemma~\ref{pieri} and ($\dagger$), we have
\[ E^{(\nu,\overline{\lambda})} \otimes \sgn_l \uparrow
E^{(\overline{\mu}',\overline{\lambda}')} \qquad \mbox{and} 
\qquad E^{(\nu,\overline{\lambda})} \otimes \sgn_l \rightsquigarrow_L 
E^{(\overline{\mu},\overline{\lambda})}.\]
Using Lemma~\ref{bnsign}, we see that $E^{(\lambda,\mu)} \preceq_L 
E^{(\lambda',\mu')}$ as required.
\end{exmp}

\begin{exmp} \label{expeq} Assume that $a=1$ and $b \in 
\{0,1\}$, as in Example~\ref{expab1}. We claim that then the reverse 
implication in Theorem~\ref{mainbn} also holds, that is, we have: 
\begin{center}
\fbox{$E^{(\lambda,\mu)} \preceq_L E^{(\lambda',\mu')} \quad 
\Leftrightarrow \quad (\lambda,\mu) \preceq_{1,b} (\lambda',\mu')$.}
\end{center}
Indeed, by the discussion in Example~\ref{expab1}, it will be sufficient 
to prove the reverse implication in Theorem~\ref{mainbn} assuming that 
$(\lambda,\mu)$ and $(\lambda',\mu')$ are ``$(1,b)$-special''. But this 
has already been done by Spaltenstein \cite[\S 4]{Spalt}, using an 
explicit construction which is similar to, but much more ingenious than 
the one in Example~\ref{expasym2}. 
\end{exmp}

\begin{exmp} \label{exbgil} Let $a=2$ and $b\geq 1$ be odd.
Then $b=2r+b'$ where $r \geq 0$ and $b'=1$. This choice of parameters 
naturally arises from the representation theory of the finite unitary 
groups $G=\mbox{GU}_m(\F_q)$ where $m=2n+\frac{1}{2}r(r+1)$; see 
\cite[Remark~1.1]{bgil} and the references there. 

Let $(\lambda,\mu)$ be a bipartition of $n$. The corresponding 
multiset $Z_{2,b} (\lambda, \mu)$ is formed by the $2N+r$ entries
\begin{alignat*}{2}
2&(\lambda_{i}+N+r-i)+1 &&\qquad (1\leq i \leq N+r),\\
2&(\mu_{i}+N-i) &&\qquad (1\leq i \leq N).
\end{alignat*}
Since these entries are all distinct, we can regard this multiset as the 
set of $\beta$-numbers of a partition, which we denote by $\pi_b(\lambda,
\mu)$. Setting $a=2$ and $b'=1$ in the right-hand side of the formula 
in (M1), we obtain
\[ na+N^2a+N(b-a)+\binom{r}{2}a+rb'=2n+\frac{1}{2}r(r+1)+
\binom{2N+r}{2}.\]
This means that $\pi_b(\lambda,\mu)$ is a partition of $2n+\frac{1}{2}r(r+
1)$. Further note that two partitions are related by the dominance order 
if and only if the corresponding sets of $\beta$-numbers (arranged in 
decreasing order) are related by the dominance order. Hence, in this case, 
we can restate the implication in Theorem~\ref{mainbn} as follows:
\begin{center}
\fbox{$E^{(\lambda,\mu)} \preceq_L E^{(\lambda',\mu')} \qquad \Rightarrow 
\qquad \pi_b(\lambda,\mu) \trianglelefteq \pi_b(\lambda',\mu')$.}
\end{center}
Although the setting is somewhat different, a statement of this kind has
been conjectured by Bonnaf\'e et al. \cite[Remark~1.2]{bgil}. 
\end{exmp}

\begin{exmp} \label{expspringer} Assume that $a=b=1$,
that is, we are in the ``equal parameter case''. Then $W_n$ is the Weyl 
group of the algebraic group $G=\mbox{SO}_{2n+1}(F)$, where we can take 
the field $F$ to be $\C$ or $\overline{\F}_p$ for a prime $p>2$. In this 
setting, by the main results of \cite{klord}, the pre-order relation 
$\preceq_L$ admits a geometric interpretation in terms of the Zariski 
closure relation among the unipotent classes of $G$. This can also be used
to obtain a combinatorial description of $\preceq_L$ which, however, looks 
different from that in Theorem~\ref{mainbn}~! Let us briefly explain what 
is happening here. The Springer correspondence yields a map 
\[ \Irr(W_n) \rightarrow \{\mbox{set of unipotent classes of $G$}\},
\qquad E \mapsto O_E;\]
see \cite[\S 5]{klord} and the references there. Now assume that $E,E'\in 
\Irr(W_n)$ are ``special'', that is, they are labelled by $(1,1)$-special
bipartitions in the sense of Example~\ref{expab1}. 
Then, by \cite[Theorem~4.10 and Corollary~5.5]{klord}, we have:
\[E \preceq_L E' \qquad \Leftrightarrow\qquad E \leq_{\cLR} E' 
\qquad \Leftrightarrow \qquad O_E\subseteq\overline{O}_{E'},\]
where the bar denotes Zariski closure. Now, the Springer correspondence 
is described explicitly as follows. Consider the map 
\[\{\mbox{bipartitions of $n$}\}\rightarrow\{\mbox{partitions of 
$2n+1$}\}, \qquad (\lambda,\mu) \mapsto \pi_3(\lambda,\mu),\]
defined in terms of multisets $Z_{2,3}^N(\lambda,\mu)$ as in 
Example~\ref{exbgil}. (Note that these are not the multisets associated
with $a=b=1$.) Since $G \subseteq \mbox{GL}_{2n+1}(F)$, every unipotent 
class of $G$ consists of matrices of a fixed Jordan type which is 
specified by a partition of $2n+1$. It is known that this partition 
determines the unipotent class in $G$; see \cite[\S 13.1]{Carter2}. Then, 
as explained in \cite[\S 13.3]{Carter2}, we have
\[ O_{E^{(\lambda,\mu)}}=\mbox{unipotent class consisting of matrices
of Jordan type $\pi_3(\lambda,\mu)$}.\]
Furthermore, it is known that the closure relation among the unipotent 
classes of $G$ is given by the dominance order among the partitions 
labelling the unipotent classes; see \cite[\S 13.4]{Carter2}. Hence, we 
conclude:
\begin{center}
\fbox{$E^{(\lambda,\mu)} \preceq_L E^{(\lambda',\mu')} \quad \Leftrightarrow
\quad \pi_3(\lambda,\mu)\trianglelefteq \pi_3(\lambda',\mu')$}
\end{center}
where $(\lambda,\mu)$, $(\lambda',\mu')$ are $(1,1)$-special. Thus, via the 
Springer correspondence, we have obtained a new combinatorial description 
of $\preceq_L$. Consequently, by Example~\ref{expeq}, the following 
equivalence must be true for $(1,1)$-special bipartitions $(\lambda,\mu)$ 
and $(\lambda',\mu')$: 
\[ \pi_3(\lambda,\mu) \trianglelefteq \pi_3(\lambda',\mu') \qquad 
\Leftrightarrow \qquad (\lambda,\mu) \preceq_{1,1} (\lambda',\mu').\]
This can, of course, also be checked directly; see Example~\ref{direct}
below. 

Finally, $W_n$ can also be regarded as the Weyl group of $G=\mbox{Sp}_{2n}
(F)$. In this case, the Springer correspondence is described using the map 
\[\{\mbox{bipartitions of $n$}\}\rightarrow\{\mbox{partitions of 
$2n$}\}, \qquad (\lambda,\mu) \mapsto \pi_1(\lambda,\mu),\]
defined in terms of multisets $Z_{2,1}^N(\lambda,\mu)$. Then, arguing 
as above, one finds that 
\begin{center}
\fbox{$E^{(\lambda,\mu)} \preceq_L E^{(\lambda',\mu')}\quad\Leftrightarrow
\quad \pi_1(\lambda,\mu)\trianglelefteq \pi_1(\lambda',\mu')$}
\end{center}
where $(\lambda,\mu)$, $(\lambda',\mu')$ are $(1,1)$-special. Again, for 
such bipartitions, it must be true that $\pi_1(\lambda,\mu)\trianglelefteq 
\pi_1 (\lambda',\mu') \Leftrightarrow (\lambda,\mu) \preceq_{1,1}
(\lambda',\mu')$.
\end{exmp}

\begin{exmp} \label{direct} Here we give a direct combinatorial proof for 
the following equivalence that we encountered in Example~\ref{expspringer}
above: 
\[Z_{1,1}^N(\lambda,\mu) \trianglelefteq Z_{1,1}^N(\lambda',\mu')\quad 
\Leftrightarrow \quad Z_{2,3}^N(\lambda,\mu) \trianglelefteq 
Z_{2,3}^N(\lambda',\mu')\]
where $(\lambda,\mu)$ and $(\lambda',\mu')$ are $(1,1)$-special bipartitions 
of $n$. This will also be a good illustration of how to deal with the
multisets $Z_{a,b}^N(\lambda,\mu)$. First, we need some preparations. 
Let $(\lambda,\mu)$ be $(1,1)$-special and write
\[\lambda=(\lambda_1\geq \lambda_2\geq \cdots \geq \lambda_{N+1} \geq 0),
\qquad  \mu=(\mu_1\geq \mu_2\geq \ldots \geq \mu_N \geq 0)\]
for some $N \geq 0$. Then $Z_{1,1}^N(\lambda,\mu)=\{z_1,z_2, \ldots, 
z_{2N+1}\}$ where 
\begin{alignat*}{2}
z_{2i-1}&=\lambda_{i}+N+1-i &&\qquad (1\leq i \leq N+1),\\
z_{2i}&=\mu_{i}+N-i &&\qquad (1\leq i \leq N).
\end{alignat*}
Since $(\lambda,\mu)$ is $(1,1)$-special, we have $z_1\geq z_2 \geq \ldots 
\geq z_{2N+1}\geq 0$. Then note that 
\[ Z_{2,3}^N(\lambda,\mu)=\{\tilde{z}_1, \tilde{z}_2, \ldots,
\tilde{z}_{2N+1}\}\]
where the sequence $(\tilde{z}_i)_{1\leq i \leq 2N+1}$ is defined as
follows:
\[ \tilde{z}_i=\left\{\begin{array}{cl} 
2z_i+1 & \qquad \mbox{if $i$ is odd and $z_{i-1}>z_i$},\\
2z_i & \qquad \mbox{if $i$ is even and $z_i>z_{i+1}$},\\
2z_i+1 & \qquad \mbox{if $i$ is even and $z_i=z_{i+1}$},\\ 
2z_i & \qquad \mbox{if $i$ is odd and $z_i=z_{i-1}$}.\end{array} \right.\]
Since $z_i\geq z_{i+1}$ for all $i$, one immediately checks that 
$\tilde{z}_i\geq \tilde{z}_{i+1}$ for all $i$. For $d \in \{1,\ldots,
2N+1\}$, define 
\[ \varepsilon_d=\left\{\begin{array}{cl} 1 & \quad \mbox{if $d$ is 
even and $z_d=z_{d+1}$},\\ 0 & \quad \mbox{otherwise}. \end{array}\right.\]
Then we find that 
\begin{equation*}
\sum_{1\leq i \leq d} \tilde{z}_i=\varepsilon_d+ \lfloor (d+1)/2\rfloor+
2\Bigl(\sum_{1\leq i \leq d} z_i\Bigr),\tag{$*$}
\end{equation*}
where $\lfloor x \rfloor$ denotes the integer part of $x$.
Now let $(\lambda',\mu')$ also be $(1,1)$-special and define 
\[ Z_{1,1}^N(\lambda', \mu')=\{z_1',z_2',\ldots,z_{2N+1}'\}\quad\mbox{and}
\quad Z_{2,3}^N(\lambda',\mu')= \{\tilde{z}_1',\tilde{z}_2',\ldots, 
\tilde{z}_{2N+1}'\}\]
analogously. If $Z_{2,3}^N(\lambda,\mu) \trianglelefteq Z_{2,3}^N(\lambda',
\mu')$, then
\[ \sum_{1\leq i \leq d} \tilde{z}_i \leq \sum_{1\leq i \leq d} 
\tilde{z}_i' \qquad \mbox{for $1\leq d \leq 2N+1$}.\]
Using ($*$), we deduce that $\varepsilon_d+2\sum_{1\leq i \leq d} z_i
\leq \varepsilon_d'+ 2\sum_{1\leq i \leq d} z_i'$ for all $d$. This 
certainly implies that 
\[ \sum_{1\leq i\leq d} z_i \leq \sum_{1\leq i \leq d} z_i' \qquad
\mbox{(for all $d$)} \qquad \mbox{and so} \qquad
Z_{1,1}^N(\lambda,\mu)\trianglelefteq Z_{1,1}^N(\lambda',\mu').\]
Conversely, assume that $Z_{1,1}^N(\lambda) \trianglelefteq
Z_{1,1}^N(\lambda',\mu')$. Let $d \geq 1$. Then ($*$) shows:
\[ \varepsilon_d \leq \varepsilon_d' \quad \mbox{or} \quad 
\sum_{1\leq i \leq d} z_i <\sum_{1\leq i \leq d} z_i' \qquad
\Rightarrow \qquad \sum_{1\leq i \leq d} \tilde{z}_i \leq 
\sum_{1\leq i \leq d} \tilde{z}_i'.\]
So the only case which requires an extra argument is when 
\[ \varepsilon_d=1, \qquad \varepsilon_d'=0 \qquad \mbox{and}
\qquad \sum_{1\leq i \leq d} z_i=\sum_{1\leq i \leq d} z_i'.\]
Since $\varepsilon_d=1$, we have $z_d=z_{d+1}$ and $d$ is even. We have 
\[ \sum_{1\leq i \leq d+1} z_i \leq \sum_{1\leq i \leq d+1} z_i' 
\qquad \mbox{and so} \qquad z_d=z_{d+1} \leq z_{d+1}'<z_d',\]
where the latter inequality holds since $\varepsilon_d' =0$. On the other 
hand, we also have
\[ \sum_{1\leq i \leq d-1} z_i \leq \sum_{1\leq i \leq d-1} z_i'
\qquad \mbox{and so} \qquad z_d\geq z_d',\]
a contradiction. Hence, the special case does not occur. This completes
the proof of the desired equivalence.
\end{exmp}

\begin{exmp} \label{expdn} Assume that $a=1$ and $b=0$. We
shall now explain how this case is related to groups of type $D_n$. Let 
$\tilde{W}_n\subseteq W_n$ be the subgroup generated by the reflections
$\{ts_1t,s_1, \ldots,s_{n-1}\}$ and let $\tilde{L}$ denote the restriction 
of $L$ to $\tilde{W}_n$. Then $\tilde{W}_n$ is of type $D_n$ and 
$\tilde{L}$ is the usual length function on $\tilde{W}_n$; see, for 
example, \cite[\S 1.4]{gepf}. The irreducible representations of
$\tilde{W}_n$ are classified as follows. Given a bipartition $(\lambda,
\mu)$ of $n$, we denote by $E^{[\lambda,\mu]}$ the restriction of 
$E^{(\lambda,\mu)}\in\Irr(W_n)$ to $\tilde{W}_n$. Then we have (see 
\cite[5.6.1]{gepf}):
\begin{itemize}
\item If $\lambda \neq \mu$, then $E^{[\lambda,\mu]}=E^{[\mu,\lambda]}$
is an irreducible representation of $\Irr(\tilde{W}_n)$.
\item If $\lambda=\mu$, then $E^{[\lambda,\lambda]}=E^{[\lambda,+]}\oplus
E^{[\lambda,-]}$ where $E^{[\lambda,+]}$ and $E^{[\lambda,-]}$ are 
non-isomorphic irreducible representations of $\tilde{W}_n$. (This can
only occur if $n$ is even.)
\end{itemize}
Furthermore, all irreducible representations of $\tilde{W}_n$ arise in
this way. We shall say that $\tilde{E}\in \Irr(\tilde{W}_n)$ is ``special''
if $\tilde{E}$ is a constituent of $E^{[\lambda,\mu]}$ where $(\lambda,
\mu)$ is $(1,0)$-special in the sense of Example~\ref{expab1}. This 
coincides with Lusztig's definition \cite[Chap.~4]{LuBook}. In particular, 
each family of $\Irr(\tilde{W}_n)$ contains a unique special representation
and so it is enough to describe $\preceq_{\tilde{L}}$ for special 
representations. 

Now $\tilde{W}_n$ is the Weyl group of the algebraic group $\tilde{G}=
\mbox{SO}_{2n}(F)$ where, as above, $F$ is $\C$ or $\overline{\F}_p$ for 
a prime $p>2$. Again, the Springer correspondence yields a map 
\[ \Irr(\tilde{W}_n) \rightarrow \{\mbox{set of unipotent classes of 
$\tilde{G}$}\}, \qquad \tilde{E} \mapsto \tilde{O}_{\tilde{E}},\]
which is explicitly described in \cite[\S 13.3]{Carter2}. By the main 
results of \cite{klord}, we have 
\[\tilde{E} \preceq_{\tilde{L}} \tilde{E}' \qquad \Leftrightarrow\qquad 
\tilde{E} \leq_{\cLR} \tilde{E}' \qquad \Leftrightarrow \qquad 
\tilde{O}_{\tilde{E}}\subseteq\overline{\tilde{O}}_{\tilde{E}'}\]
for any $\tilde{E},\tilde{E}' \in \Irr(\tilde{W}_n)$ which are special. 
By Spaltenstein \cite[\S 4]{Spalt}, the condition on the right hand side 
can be expressed using $\preceq_{1,0}$. More precisely, let 
$(\lambda,\mu)$ and $(\lambda',\mu')$ be $(1,0)$-special bipartitions
of $n$ such that $\tilde{E}$ is a constituent of $E^{[\lambda,\mu]}$ 
and $\tilde{E}'$ is a constituent of $E^{[\lambda',\mu']}$. Here, 
$(\lambda,\mu)$ and $(\lambda',\mu')$ are uniquely determined. (Just note
that, if both $(\lambda,\mu)$ and $(\mu,\lambda)$ are $(1,0)$-special,
then $\lambda=\mu$.) Then 
\[ \tilde{E} \preceq_{\tilde{L}} \tilde{E}' \qquad \Leftrightarrow \qquad
\left\{\begin{array}{cl} \tilde{E}=\tilde{E}' & \qquad \mbox{if
$\lambda=\lambda'=\mu=\mu'$},\\ (\lambda,\mu) \preceq_{1,0} 
(\lambda',\mu') & \qquad \mbox{otherwise}.\end{array}\right.\]
Thus, $\preceq_{\tilde{L}}$ is essentially determined by $\preceq_{1,0}$, 
where some special care is required when comparing the two representations
$E^{[\lambda,\pm]}$ with each other. 
\end{exmp}

%%%%%%%%%%%%%%%%%%%%%%%%%%%%%%%%%%%%%%%%%%%%%%%%%%%%%%%%%%%%%%%%%%%%%%%%%%%
\section{Concluding remarks} \label{sec4}
We return to the general setting where $W$ is any finite Coxeter group
and $L \colon W \rightarrow \Z$ is a weight function such that $L(s)
\geq 0$ for $s \in S$. Having established the results in 
Section~\ref{sec3}, we can now formulate some properties of the 
pre-order relation $\preceq_L$ which hold in complete generality.

\begin{thm} \label{ordmon} Let $E,E'\in \Irr(W)$. If $E \preceq_L E'$, 
then $\ba_{E'} \leq \ba_E$, with equality only if $E,E'$ belong to 
the same family.
\end{thm}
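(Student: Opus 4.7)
The plan is to reduce the statement to a type-by-type check and, in each irreducible case, trace the monotony of $\ba$ through the explicit combinatorial description of $\preceq_L$.

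First, I would invoke Remark~\ref{ordprod}: since $\ba_E$ is additive along the decomposition $W=W_1\times\cdots\times W_k$ and $\preceq_L$ factors componentwise, it suffices to prove the inequality and the equality clause on each irreducible factor separately. The case $L\equiv 0$ is then immediate from Example~\ref{expordtriv} together with Example~\ref{expaasym}(a). So I may assume $(W,S)$ is irreducible and $L$ is not identically zero.

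Next, I split into cases according to the type of $(W,S)$. In type $A_{n-1}$, Example~\ref{ordA} says $E^\lambda \preceq_L E^\mu \Leftrightarrow \lambda \trianglelefteq \mu$, and $\ba_{E^\lambda}=n(\lambda)a$; the monotony $\lambda \trianglelefteq \mu \Rightarrow n(\mu) \leq n(\lambda)$ with equality only when $\lambda=\mu$ is the classical fact recalled before Definition~\ref{def2} (each family being a singleton). The heart of the proof is type $B_n$ with $a>0$, $b\geq 0$, and here the key is the newly established Theorem~\ref{mainbn}: it converts $E^{(\lambda,\mu)}\preceq_L E^{(\lambda',\mu')}$ into $(\lambda,\mu)\preceq_{a,b}(\lambda',\mu')$; then Remark~\ref{lem12}(iii) (which is a consequence of the compatibility of the dominance order on multisets with the combinatorial $\ba$-invariant) yields $\ba_{a,b}(\lambda',\mu')\leq\ba_{a,b}(\lambda,\mu)$ with equality only when the two bipartitions share the same multiset; finally, Proposition~\ref{bnafunc} identifies $\ba_{a,b}$ with $\ba_{E^{(\lambda,\mu)}}$ and Proposition~\ref{bnfam} identifies the combinatorial families with Lusztig's families. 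The corner case $a=0$, $b>0$ in type $B_n$ is disposed of by Example~\ref{bnanull}. Type $D_n$ is then handled via Example~\ref{expdn}, which embeds $\tilde W_n\subseteq W_n$ and reduces $\preceq_{\tilde L}$ (on special representations, which suffices since families are monotony-invariant) to $\preceq_{1,0}$ on bipartitions, where the type $B_n$ argument applies.

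For the remaining irreducible types — the exceptionals $F_4$, $E_6$, $E_7$, $E_8$, $G_2$ and the non-crystallographic $H_3$, $H_4$, $I_2(m)$ — Remark~\ref{afuncsum} says that $\preceq_L$ is explicitly known from \cite{klord}, as are the families and $\ba$-invariants (Lusztig \cite{LuBook}, \cite{Lusztig03}), so the required inequality and equality clause reduce to a finite (case-by-case) verification that was carried out in that previous work.

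I expect the only genuine obstacle to lie in type $B_n$ with unequal parameters, and it has been overcome by the combination of Theorem~\ref{mainbn} together with the nontrivial compatibility property in Remark~\ref{lem12}(iii), ultimately resting on the conjugation-compatibility Proposition~\ref{cor11} / Lemma~\ref{lem11}. Once these ingredients are in place, the proof is essentially a matter of assembling the pieces case by case.
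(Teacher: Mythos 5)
Your proposal is correct and follows essentially the same route as the paper: reduce to the irreducible case via Remark~\ref{ordprod}, dispatch $L\equiv 0$ trivially, invoke prior work (\cite{klord}, \cite{compf4}) for the equal-parameter and exceptional types, and for type $B_n$ with $a>0$ combine Theorem~\ref{mainbn}, Remark~\ref{lem12}, Proposition~\ref{bnafunc} and Proposition~\ref{bnfam}, with Example~\ref{bnanull} covering $a=0$. The only difference is that you spell out types $A$ and $D$ a bit more explicitly (via Example~\ref{ordA} and Example~\ref{expdn}), whereas the paper simply cites \cite[Example~3.5 and Prop.~4.4]{klord} for those; since Example~\ref{expdn} itself rests on \cite{klord}, this is elaboration rather than a genuinely different argument.
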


\begin{proof} Using Remark~\ref{ordprod}, it is sufficient to prove this
in the case where $(W,S)$ is irreducible. So let us assume that $(W,S)$
is irreducible. If $L(s)=0$ for all $s \in S$, then the assertions 
trivially hold by Example~\ref{expordtriv}. Assume now that $L(s)>0$ 
for some $s \in S$. 

If $W$ is of type $A_n$, $D_n$, $H_3$, $H_4$, $E_6$, $E_7$, $E_8$ 
or $I_2(m)$ ($m$ odd), then we are in the equal parameter case and 
the assertions hold by \cite[Example~3.5 and Prop.~4.4]{klord}. 

If $W$ is of type $I_2(m)$ ($m$ even) or $F_4$, the assertions can 
be checked by explicit computations; see \cite{compf4}
and \cite[Examples~3.5 and 3.6]{klord}. If $L(s)=0$ for some $s\in S$,
see \cite[Example~2.2.8 and Remark~2.4.13]{GeNico} where similar 
verifications have been performed with respect to the pre-order 
relation $\leq_{\cLR}$. 

Finally, assume that $W$ is of type $B_n$ with parameters $a,b$ as
in Section~\ref{sec3}. If $a>0$, the assertions follow from
Theorem~\ref{mainbn} and Remark~\ref{lem12}, in combination 
with Propositions~\ref{bnafunc} and \ref{bnfam}. For the case $a=0$,
see Example~\ref{bnanull}. 
\end{proof}

\begin{cor} \label{ordfam} Let $E,E'\in \Irr(W)$. Then $E$ and $E'$ belong 
to the same family if and only if $E \preceq_L E'$ and $E' \preceq_L E$.
\end{cor}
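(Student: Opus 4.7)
The plan is to derive Corollary~\ref{ordfam} directly from Theorem~\ref{ordmon}, which has just been established, together with the remark already made in the paper that representations in the same family are comparable in both directions under $\preceq_L$.

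For the ``only if'' direction, I would simply invoke the observation (stated in the paragraph following Definition~\ref{mydef} and attributed to \cite{klord}) that if $E$ and $E'$ lie in the same family, then both $E \preceq_L E'$ and $E' \preceq_L E$ hold. No new argument is needed here.

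For the ``if'' direction, suppose $E \preceq_L E'$ and $E' \preceq_L E$. Applying Theorem~\ref{ordmon} to the first inequality yields $\ba_{E'} \leq \ba_E$, and applying it to the second yields $\ba_E \leq \ba_{E'}$. Hence $\ba_E = \ba_{E'}$. The equality clause of Theorem~\ref{ordmon}, applied (say) to $E \preceq_L E'$, then forces $E$ and $E'$ to belong to the same Lusztig family, as required.

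There is no genuine obstacle here: the corollary is a clean formal consequence of the monotonicity plus equality statement in Theorem~\ref{ordmon}. The entire content of the result lies in Theorem~\ref{ordmon} itself (whose proof in turn ultimately rested on the combinatorial work of Sections~\ref{sec1}--\ref{sec3} for type $B_n$, together with the case-by-case verifications for the other types). Thus the proof proposal amounts to a two-line deduction, which I would present without further comment.
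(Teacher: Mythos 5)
Your proposal is correct and matches the paper's proof essentially verbatim: the paper likewise disposes of the ``only if'' direction by appealing to the remark from \cite{klord}, and deduces the ``if'' direction directly from Theorem~\ref{ordmon} (the two applications giving $\ba_E=\ba_{E'}$, and the equality clause then placing $E,E'$ in the same family). Nothing to add.
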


\begin{proof} The ``only if'' part is clear by the definitions, as
already mentioned in \cite[Remark~2.11]{klord}. The ``if'' part now
immediately follows from Theorem~\ref{ordmon}. 
\end{proof}

\begin{cor} \label{ordomg} Let $E,E'\in \Irr(W)$. If $E \preceq_L E'$, 
then $\omega_L(E) \leq \omega_L(E')$, with equality only if $E,E'$ belong 
to the same family.
\end{cor}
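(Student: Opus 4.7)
The plan is to derive Corollary~\ref{ordomg} directly from Theorem~\ref{ordmon} together with the identity $\omega_L(E)=\ba_{E\otimes\sgn}-\ba_E$ recorded in Remark~\ref{defafun1}. The key auxiliary observation I would first establish is the following symmetry of $\preceq_L$ under tensoring with the sign representation:
\[ E\preceq_L E'\qquad\Longrightarrow\qquad E'\otimes\sgn\preceq_L E\otimes\sgn.\]
This is essentially built into Definition~\ref{mydef}. Given a witnessing chain $E=E_0,E_1,\ldots,E_m=E'$ (with the data $I_i,M_i,M_i'$ at each step), one considers the reversed chain $F_j:=E_{m-j}\otimes\sgn$, so that $F_0=E'\otimes\sgn$ and $F_m=E\otimes\sgn$, using the same $M_i\preceq_L M_i'$ at each step. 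An elementary step of type (a) in Definition~\ref{mydef} for the original chain becomes a step of type (b) for the reversed chain, and vice versa (using $\sgn\otimes\sgn=1$). Hence the reversed chain witnesses $E'\otimes\sgn\preceq_L E\otimes\sgn$.

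With this symmetry in hand, I would apply Theorem~\ref{ordmon} twice. From $E\preceq_L E'$ we obtain $\ba_{E'}\leq\ba_E$, and from $E'\otimes\sgn\preceq_L E\otimes\sgn$ we obtain $\ba_{E\otimes\sgn}\leq\ba_{E'\otimes\sgn}$. Subtracting gives
\[ \omega_L(E)=\ba_{E\otimes\sgn}-\ba_E\;\leq\;\ba_{E'\otimes\sgn}-\ba_{E'}=\omega_L(E'),\]
which is the desired inequality.

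For the equality clause, assume $\omega_L(E)=\omega_L(E')$. Rearranging the two inequalities above, we have $\ba_{E'\otimes\sgn}-\ba_{E\otimes\sgn}\geq 0$ and $\ba_{E'}-\ba_E\leq 0$, while $\omega_L(E)=\omega_L(E')$ forces these two quantities to be equal. Consequently both must vanish; in particular $\ba_E=\ba_{E'}$. Applying the equality clause of Theorem~\ref{ordmon} to $E\preceq_L E'$ with $\ba_E=\ba_{E'}$ yields that $E$ and $E'$ lie in the same Lusztig family, as required.

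There is no real obstacle here beyond verifying the tensor-sgn symmetry of $\preceq_L$, which is a straightforward unwinding of Definition~\ref{mydef}; once granted, the corollary is a purely formal consequence of Theorem~\ref{ordmon} and the definition of $\omega_L$.
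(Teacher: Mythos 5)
Your proof is correct and follows essentially the same route the paper takes: establish the $\sgn$-tensoring symmetry $E\preceq_L E' \Rightarrow E'\otimes\sgn\preceq_L E\otimes\sgn$ directly from Definition~\ref{mydef} by chain reversal, then combine Theorem~\ref{ordmon} (applied to both $E\preceq_L E'$ and $E'\otimes\sgn\preceq_L E\otimes\sgn$) with the identity $\omega_L(E)=\ba_{E\otimes\sgn}-\ba_E$ from Remark~\ref{defafun1}. The paper's version is simply more compressed; you have supplied the details it leaves implicit, including the careful check that the reversed chain swaps the two cases in Definition~\ref{mydef}.
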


\begin{proof} Assume that $E \preceq_L E'$. By the definition of $\preceq_L$, 
it is clear that then we also have $E' \otimes \sgn \preceq_L E\otimes\sgn$.
Hence, using Remark~\ref{defafun1} and Theorem~\ref{ordmon}, we deduce that
$\omega_L(E) \leq \omega_L(E')$, with equality only if $E,E'$ belong to the
same family.
\end{proof}

\begin{rem} \label{remAa} Assume that $W$ is a Weyl group and we are in the 
equal parameter case. For $E \in \Irr(W)$, let $D_E(u) \in {\Q}[u]$ (where 
$u$ is an indeterminate) be the corresponding generic degree, defined in 
terms of the associated Iwahori--Hecke algebra; see, for example, 
\cite[Cor.~9.3.6]{gepf}. Then the invariant $\ba_E$ is given by
\[D_E(u)=f_E^{-1}u^{\ba_E}+\mbox{ linear combination of higher powers 
of $u$},\]
where $f_E$ is a non-zero integer; see Lusztig \cite[4.1]{LuBook}.
Furthermore, we have
\[D_E(u)=f_E^{-1}u^{\bA_E}+\mbox{ linear combination of lower powers 
of $u$},\]
for some integer $\bA_E \geq 0$. By \cite[Prop.~9.4.3]{gepf}, we have 
$\omega_L(E)=L(w_0)-(\ba_E+\bA_E)$.
Hence, Corollary~\ref{ordomg} implies that
\[ E \preceq_L E'\quad\Rightarrow\quad\ba_{E'}+\bA_{E'}\geq \ba_E+\bA_E,\]
with equality only if $E,E'$ belong to the same family. An analogous
statement, where ``families'' and ``$\preceq_L$'' are replaced by 
``two-sided cells'' and the relation ``$\leq_{\cLR}$'' (as referred 
to in Remark~\ref{remLR}), plays a role in Ginzburg et al., 
\cite[Prop.~6.7]{ggor}. 
\end{rem}

%USE induction on the absolute value of $\omega_L(E)$ to show that, in
%type $B_n$ with $b'>0$, for every $E \in \Irr(W)$ there exists a left
%cell $C$ such that $E=[C]_1$.????

%%%%%%%%%%%%%%%%%%%%%%%%%%%%%%%%%%%%%%%%%%%%%%%%%%%%%%%%%%%%%%%%%%%%%%%%%%%
 
\end{document}